\newlength{\dhatheight}
\newcommand{\bea}{\begin{eqnarray*}}
\newcommand{\eea}{\end{eqnarray*}}
\newcommand{\beq}{\begin{equation}}
\newcommand{\eeq}{\end{equation}}
\newcommand{\bfomega}{\mbox{\boldmath $\omega$ \unboldmath} \hskip -0.05 true in}
\newcommand{\bom}{\bfomega}
 \newcommand{\ii}{\mathrm{i}}
\newtheorem{theorem}{Theorem}[section]
\theoremstyle{definition}
\newtheorem{proposition}[theorem]{Proposition}
\newtheorem{corollary}[theorem]{Corollary}
\theoremstyle{remark}
\newtheorem{remark}[theorem]{Remark}
\numberwithin{equation}{section}
\begin{document}

\title[Discrete Spectra of Convolutions on Disks using S-L Theory]{Discrete Spectra of Convolutions on Disks using Sturm-Liouville Theory}

\author[A. Ghaani Farashahi]{Arash Ghaani Farashahi$^*$}
\address{Laboratory for Computational Sensing and Robotics (LCSR), Whiting School of Engineering, Johns Hopkins University, Baltimore, Maryland, United States.}
\email{arash.ghaanifarashahi@jhu.edu}
\email{ghaanifarashahi@outlook.com}

\author[G.S. Chirikjian]{Gregory S. Chirikjian}
\address{Laboratory for Computational Sensing and Robotics (LCSR), Whiting School of Engineering, Johns Hopkins University, Baltimore, Maryland, United States.}
\email{gregc@jhu.edu}
\email{gchirik@gmail.com}
\subjclass[2010]{Primary 34B24, 42C05, 43A30, 43A85, Secondary 43A10, 43A15, 43A20.}

\keywords{Convolution on disks, zero-padded functions, Fourier-Bessel series, Sturm-Liouville theory, zero-valued boundary condition, derivative boundary condition.}
\thanks{$^*$Corresponding author}

\begin{abstract}
This paper presents a systematic study for analytic aspects of discrete spectra methods for
convolution of functions supported on disks, according to the Sturm-Liouville theory. We then investigate different aspects of the presented theory in the cases of zero-value boundary condition and derivative boundary condition.
\end{abstract}

\maketitle

\section{{\bf Introduction}}

The theory of convolution operators is placed at the core of many directions of matheatmical analysis such as abstract harmonic analysis, representation theory, functional analysis, and operator theory, see \cite{AGHF.BBMS-NS, AGHF.CJM, AGHF.JAuMS, AGHF.IJM.2015, Kisil.Adv, Kisil.BJMA, 50} and references therein. Over the last decades, some aspects of convolution operators have achieved significant popularity in modern mathematical analysis and constructive approxiamtion areas including Gabor and wavelet analysis \cite{Fei0, Fei1, Fei2, Fei.Gro1, Fei.Gro2} and recent applications in computational science and engineering \cite{PIb2, PIb4, PI6, AGHF.GSG, Kya.PI.2000, Kya.PI.1999}.

Harmonic analysis of functions in $L^2(\mathbb{R}^d)$ and $L^1(\mathbb{R}^d)$ has been extensively developed over the past two centuries.
The Fourier transforms of functions in these spaces have a continuous spectrum. This leads to two well-known problems. First, a function
cannot have compact support in both the real space and in Fourier space. Second, despite the exactness of the theory, when it comes to
 computations, the spectrum must be discretized in some way. These problems are significant because in many applications in engineering, convolutions and
correlations of functions on Euclidean spaces are required. This includes template matching in image processing for pattern recognition, and protein docking \cite{Kim.Kim, 19Venkatraman2009, 19Venkatraman2009a}, and
characterizing how error probabilities propagate \cite{PI5}. Most recently convolutional neural networks (CNNs) in the area of ``deep learning'' use convolution for image recognition problems.

In some applications, the goal is not to recover the values of convolved functions, but rather their support, which is the Minkowski sum of the supports of the two non-negative functions being convolved \cite{Kavraki}.
In many applications the functions of interest
take non-negative values, and as such can be normalized and treated as probability density functions (pdfs).
In this context, convolution of pdfs has the significance that it produces the pdf of the sum of the two random varaiables described
by the original pdfs.

Usually two approaches to circumvent the continuous spectrum are taken to assist in 
computing convolutions of pdfs on Euclidean space. First, if the
functions are compactly supported, then their supports are enclosed in a solid cube with dimensions at least twice the size of the support of the functions, and  periodic versions of the functions are constructed.
In this way, convolution of these periodic functions on the $d$-torus can be used to replace convolution on $d$-dimensional Euclidean space. The benefit of this is that the spectrum is discretized and Fast Fourier Transform (FFT) methods can be used to compute the convolutions. This approach is computationally attractive, but in this periodization procedure the natural invariance of integration
on Euclidean space under rotation transformations is lost when moving to the torus. This can be a significant issue in rotation matching problems.

A second approach is to take the original compactly supported functions and replace them with functions on Euclidean space that have rapidly decaying
tails, but for which convolutions can be computed in closed form. For example, replacing each of the given functions with a sum of Gaussian distributions allows the convolution of the given functions to be computed as
a sum of convolution of Gaussians, which have simple closed-form expressions as Gaussians. The problem with this approach is that the resulting functions are not compactly supported. Moreover, if $N$ Gaussians are used to describe each input function, then $N^2$ Gaussians result after the convolution.

An altogether different approach is explored here. Rather than periodizing the given functions, or extending their support to the whole of Euclidean space, we consider functions that are supported on disks in the
plane (and by natural extension, to balls in higher dimensional Euclidean spaces). The basic idea is that
in polar coordinates each function is expanded in an orthonormal basis
consisting of Bessel functions in the radial direction
and Fourier basis in the angular direction. These basis
elements are orthonormal on the unit disk. Each input function to the convolution procedure is scaled to have support
on the disk of radius of one half and zero-padded on the unit disk. The result of the convolution (or correlation) then is a function which is supported on the unit disk. Since the convolution integral for compactly supported functions can be restricted from all of Euclidean space to the support of the functions,
it is only this integral over the support which is performed when using Fourier-Bessel expansions. Hence, the behavior of these functions outside of disks becomes irrelevant to the final result. Moreover, since these expansions are defined in polar coordinates, they behave naturally under rotations, and result in easily characterization of rotation invariants. 
We work out how the Fourier-Bessel coefficients of the original functions appear in the convolution.

This article contains 5 sections. Section
2 is devoted to establishing notation and gives a
brief summary of convolution of functions on $\mathbb{R}^2$, and polar Fourier analysis associated to Sturm-Liouville theory. In Section
3, we study zero-padded convolutions on disks. Next we present analytic aspects of the general theory of Fourier-Bessel series, that is the discrete spectra associated to zero-valued boundary condition in Sturm-Liouville theory,
for functions defined on disks. We then employ this theory for convolutions on disks.
Section 5 is dedicated to study discrete spectra associated to derivative boundary condition in Sturm-Liouville theory for functions supported on disks. We then apply this method convolutions on disks as well.

\section{{\bf Preliminaries and Notations}}

Throughout this section we shall present preliminaries and the notation.
\subsection{General Notations} For $a>0$, let $\mathbb{B}_a^d:=\{\mathbf{x}\in\mathbb{R}^d:\|\mathbf{x}\|_2\le a\}$. We then put $\mathbb{B}^d:=\mathbb{B}_1^d$, that is the unit ball in $\mathbb{R}^d$.

Let $f_j\in L^1(\mathbb{R}^d)$ with $j\in\{1,2\}$ and $\mathrm{supp}(f_j)\subseteq\mathbb{B}_{a/2}^d$. Then, we have
$$\mathrm{supp}(f_1\ast f_2)\subseteq\mathrm{supp}(f_1)+\mathrm{supp}(f_2)\subseteq\mathbb{B}_{a/2}+\mathbb{B}_{a/2}\subseteq\mathbb{B}^d_a,$$ 
where $+$ denotes the Minkowski sum, and 
for $\mathbf{x}\in\mathbb{R}^d$, we have
\begin{equation}\label{conv.Rd}
(f_1\ast f_2)(\mathbf{x}):=\int_{\mathbb{R}^d}f_1(\mathbf{y})f_2(\mathbf{x}-\mathbf{y})d\mathbf{y}.
\end{equation}

It should be mentioned that, each function $f\in L^1(\mathbb{R}^d)$, satisfies the following integral decomposition;
\begin{equation}\label{int.dec}
\int_{\mathbb{R}^d}f(\mathbf{x})d\mathbf{x}=\int_{\mathbb{S}^{d-1}}\int_0^\infty f(r\mathbf{u})r^{d-1}drd\mathbf{u}.
\end{equation}
Also, if $f\in L^1(\mathbb{R}^d)$ is supported in $\mathbb{B}_a^d$, we then have
\[
\int_{\mathbb{R}^d}f(\mathbf{x})d\mathbf{x}=\int_{\mathbb{S}^{d-1}}\int_0^af(r\mathbf{u})r^{d-1}drd\mathbf{u}.
\]

The Fourier transform of of the function $f\in L^1(\mathbb{R}^d)$ is given by
\begin{equation}
\widehat{f}(\bom)=\int_{\mathbb{R}^d}f(\mathbf{x})e^{-\ii\bom\cdot\mathbf{x}}d\mathbf{x},
\end{equation}
for all $\bom\in\mathbb{R}^d=\widehat{\mathbb{R}^d}$.

We then have the following reconstruction formula
\begin{equation}
f(\mathbf{x})=(2\pi)^{-d}\int_{\mathbb{R}^d}\widehat{f}(\bom)e^{\ii\bom\cdot\mathbf{x}}d\bom,
\end{equation}
for $\mathbf{x}\in\mathbb{R}^d$.

\subsection{\bf Polar Fourier Analysis for the Case When $d=2$}
For $s$ taking continuous non-negative values and integer $m\in\mathbb{Z}$, the basis functions (polar harmonics) are given by
\begin{equation}\label{2D.B.Wh}
\Phi_{s}^{m}(r\mathbf{u})=\Phi_{s}^{m}(r,\theta)=\sqrt{s}J_m(sr)\cdot \mathcal{Y}_m(\theta),
\end{equation}
where $\mathcal{Y}_m(\theta):=\frac{1}{\sqrt{2\pi}}\exp(\ii m\theta)$ and $J_m(x)$ is an $m^{th}$ order Bessel function of the first kind. 

Then, any 2D function $f(r,\theta)$ defined on the whole space (i.e. $0\le \theta\le 2\pi$ and $0\le r <\infty$) can be expanded with respect to $\Phi_s^m$ as defined in (\ref{2D.B.Wh}) via
\begin{equation}\label{2D.EX.Wh}
f(r,\theta)=\int_0^\infty\left(\sum_{m=-\infty}^{+\infty}\mathrm{c}_{s,m}(f)\Phi_s^m(r,\theta)\right)sds,
\end{equation}
where
\begin{equation}
\mathrm{c}_{s,m}(f):=\int_0^\infty\int_0^{2\pi}f(r,\theta)\overline{\Phi_s^m(r,\theta)}rdr d\theta.
\end{equation}

The expansion (\ref{2D.EX.Wh}) is mainly of theoretical interest, since there is an integral in the reconstruction formula.
But in experiment, one should use expansions valid/defined on finite regions, in order to make the integral in the reconstruction formula (\ref{2D.EX.Wh}) as a structured discrete sum. In this direction, we need to redefine basis functions.

Let $a>0$ and $\alpha,\beta\in\mathbb{R}$. We then have
\begin{equation}\label{2J.a}
\int_0^aJ_m(\alpha r)J_m(\beta r)rdr=\frac{\beta J_m(\alpha a)J_m'(\beta a)-\alpha J_m(\beta a)J_m'(\alpha a)}{a^{-1}(\alpha^2-\beta^2)}.
\end{equation}

The eigenvalue problem can be written as
\begin{equation}\label{Helm.PL}
\nabla_r^2\Psi(r,\theta)+\frac{1}{r^2}\nabla_\theta^2\Psi(r,\theta)+\rho^2\Psi(r,\theta)=0,
\end{equation}
which is the Helmholtz differential equation in polar coordinates, with $\rho\ge 0$, where
\begin{equation}
\nabla_r^2:=\frac{1}{r}\frac{\partial}{\partial r}\left(r\frac{\partial r}{r}\right),\hspace{1cm}\nabla_\theta^2:=\frac{\partial^2}{\partial\theta^2}.
\end{equation}
Substituting the separation of variable form $\Psi(r,\theta)=v(r)u(\theta)$ into (\ref{Helm.PL}), we get
\begin{equation}\label{V.part}
\nabla_\theta^2u+m^2u=0,
\end{equation}
and
\begin{equation}\label{U.part}
\nabla_r^2v+\left(\rho^2-\frac{m^2}{r^2}\right)v=0.
\end{equation}

Using appropriate boundary conditions according to the Sturm-Liouville theory, a set of $\rho$ values can be determined that makes $J_m(\rho r)$ again mutually orthogonal. In this direction,
(\ref{U.part}), can be rewritten as
\begin{equation}\label{V.part.SL0}
-(rv')'+\frac{m^2}{r}v=\rho^2 rv.
\end{equation}
Therefore, with
\[
P(r):=r,\hspace{1cm}Q(r):=-\frac{m^2}{r},\hspace{1cm}w(r):=r,\hspace{1cm}\lambda:=\rho^2,
\]
the equation (\ref{V.part.SL0}) takes the following S-L form
\begin{equation}\label{V.part.SL}
(P(r)v')'+Q(r)v=-\lambda w(r)v,
\end{equation}
with $r\in [0,a]$. Then, Equation (\ref{V.part.SL}), with respect to the following boundary conditions forms a singular S-L system;
\begin{equation}\label{SL.BVC0}
v(0)\cos\psi-p(0)v'(0)\sin\psi=0,
\end{equation}
and
\begin{equation}\label{SL.BVC1}
v(a)\cos\phi-p(a)v'(a)\sin\phi=0,
\end{equation}
with $\psi,\phi\in [0,\pi)$.

Invoking Sturm-Liouville theory, for such S-L problems \cite{SLT.IA, Zet.An}, we have
\begin{enumerate}
\item The eigenvalues $\lambda_1,\lambda_2,\lambda_3,\cdots$ of this S-L problem
are nonnegative real and can be ordered such that
$$\lambda_1<\lambda_2<\lambda_3<\cdots<\lambda_n<\cdots$$
with $\lim_{n\to\infty}\lambda_n=\infty$.
\item Corresponding to each eigenvalue $\lambda_n$ is a unique (up to a normalization constant) eigenfunction $y_n$ which has exactly $n-1$ zeros in $(0,a)$. The eigenfunction $y_n$ is called the $n$-th fundamental solution satisfying the Sturm-Liouville problem.
\item The normalized eigenfunctions form an orthonormal basis with respect to the weight function $w(r):=r$, that is
\begin{equation}
\int_0^ay_n(r)y_{n'}(r)rdr=\delta_{nn'},
\end{equation}
in the Hilbert function space $L^2([0,a],w(r)dr)$, where $\delta_{nn'}$ is the Kronecker delta.
\end{enumerate}

With $\psi:=\pi/2$, the equation (\ref{SL.BVC0}), has no effect on the selection of $\rho$. Thus, the only effective boundary condition is (\ref{SL.BVC1}). Substituting $v(r):=J_m(\rho r)$ into (\ref{SL.BVC}), we get
\[
J_m(\rho r)\cos\phi-\rho aJ_m'(\rho r)\sin\phi=0,
\]
with $x:=\rho r$, we have
\[
J_m(x)\cos\phi-xJ_m'(x)\sin\phi=0.
\]

Let $a>0$, and $m\in\mathbb{Z}$.
Suppose $\{z_{mn}:n\in\mathbb{N}\}$ is the set of all nonnegative zeros of
\begin{equation}\label{SL.BVC}
J_m(x)\cos\phi-xJ_m'(x)\sin\phi=0,
\end{equation}
such that $z_{m1}<z_{m2}<\cdots<z_{mn}<\cdots$, i.e.
\[
J_m(z_{mn})\cos\phi-z_{mn}J_m'(z_{mn})\sin\phi=0,
\]
for all $n\in\mathbb{N}$. For $n\in\mathbb{N}$, let $\rho_{nm}$ be given by
$\rho_{nm}:=a^{-1}z_{mn}$. We then have
\[
J_m(\rho_{nm}a)\cos\phi-a\rho_{nm}J_m'(a\rho_{nm})\sin\phi=0,
\]
for all $n\in\mathbb{N}$.

In this case, the $n$-th eigenvalue is then $\lambda_n:=\rho_{nm}^2$ and the $n$-th
eigenfunction is $J_m(\rho_{nm}r)$. Invoking Sturm-Liouville theory, the orthogonality of the eigenfunctions can be written as
\begin{equation}
\int_0^aJ_{m}(\rho_{nm}r)J_{m}(\rho_{n'm}r)rdr=N_{n}^{(m)}(a)\delta_{nn'},
\end{equation}
for all $n,n'\in\mathbb{N}$, where
\begin{equation}
N_n^{(m)}(a)=\frac{a^2}{2}\left(J_m'(z_{mn})^2+\left(1-\frac{m^2}{z_{mn}^2}\right)J_m(z_{mn})^2\right).
\end{equation}
We then have
\[
N_n^{(m)}(a)=a^2D_n^{(m)},
\]
with
\[
D_n^{(m)}:=N_n^{(m)}(1)=\frac{1}{2}\left(J_m'(z_{mn})^2+\left(1-\frac{m^2}{z_{mn}^2}\right)J_m(z_{mn})^2\right).
\]
The generalized (redefined) basis functions (polar harmonics) are given by
\begin{equation}\label{2D.B.Fr.g}
\Psi_{nm}^a(r\mathbf{u}_\theta)=\Psi_{nm}^a(r,\theta):=\mathcal{J}_{nm}^a(r)\mathcal{Y}_m(\theta),
\end{equation}
where $\mathcal{Y}_m(\theta):=\frac{1}{\sqrt{2\pi}}\exp(\ii m\theta)$ and the generalized Bessel (normalized radial) function $\mathcal{J}_{nm}^a(r)$ is given by
\begin{equation}
\mathcal{J}_{nm}^a(r):=\frac{1}{\sqrt{N_n^{(m)}(a)}}J_m(\rho_{nm}r)=\frac{1}{\sqrt{N_n^{(m)}(a)}}J_m(a^{-1}z_{mn}r),
\end{equation}
Hence, we get
\begin{equation}
\int_0^a\mathcal{J}_{nm}^a(r)\mathcal{J}_{n'm}^a(r)rdr=\delta_{nn'}.
\end{equation}
Therefore, for each $m\in\mathbb{Z}$, the set $\mathcal{B}_m^a:=\{\mathcal{J}_{nm}^a:n\in\mathbb{N}\}$ forms an orthonormal basis on the interval $[0,a]$. 
Hence, any function $v:[0,a]\to\mathbb{R}$ with $v\in L^2([0,a],rdr)$, which equivalently means
\[
\int_0^a|v(r)|^2rdr<\infty,
\]
satisfies the following constructive expansion
\begin{equation}\label{mFBS}
v(r)=\sum_{n=1}^\infty\left(\int_0^av(s)\mathcal{J}_{nm}^a(s)sds\right)\mathcal{J}_{nm}^a(r).
\end{equation}
We then conclude that
\begin{equation}
\int_0^a\int_0^{2\pi}\Psi_{nm}^a(r,\theta)\overline{\Psi_{n'm'}^a(r,\theta)}rdrd\theta=\delta_{nn'}\delta_{mm'}.
\end{equation}
Consequently, any restricted 2D square-integrable function $\xi(r,\theta)$ defined on 
the disk $\mathbb{B}_a^2$, that is $\xi\in L^2(\mathbb{B}_a^2)$, can be expanded with respect to $\Psi_{nm}^a$ as defined in (\ref{2D.B.Fr.g}) via
\begin{equation}\label{2D.EX.Fr}
\xi(r,\theta)=\sum_{m=-\infty}^\infty\sum_{n=1}^{\infty} C_{n,m}^a(\xi)\Psi_{nm}^a(r,\theta),
\end{equation}
where
\begin{equation}
C_{n,m}^a(\xi):=\int_0^a\int_0^{2\pi}\xi(r,\theta)\overline{\Psi_{nm}^a(r,\theta)}rdr d\theta.
\end{equation}

\subsubsection{\bf Zero-Value Boundary Condition}\label{ZBC}
If $\sin\phi=0$, then (\ref{SL.BVC}) reduces to
\[
J_m(x)=0.
\]
Thus, $z_{mn}$ should be the positive zeros of $J_m(x)$. We then have
\[
N_n^{(m)}(a)=\frac{a^2}{2}J_{m}'(z_{mn})^2=\frac{a^2}{2}J_{m+1}^2(z_{mn})=a^2D_n^{(m)},
\]
with
\[
D_n^{(m)}:=\frac{J_{m}'(z_{mn})^2}{2}=\frac{J_{m+1}^2(z_{mn})}{2}.
\]
In this case, the right hand side of (\ref{mFBS}) is called as $m$-th order Fourier-Bessel series of $v$.

\subsubsection{\bf Derivative Boundary Condition}\label{DBC}
If $\cos\phi=0$, then (\ref{SL.BVC}) reduces to
\begin{equation}
J_m'(x)=0.
\end{equation}
Thus, $z_{mn}$ should be the zeros of $J_m'(x)=0$. In this case, $x=0$ is one solution to $J_0'(x)=0$. Invoking Sturm-Liouville theory, $x=0$ should be considered as $z_{01}$.
We then have
\begin{equation}\label{Nnm.DBC}
N_n^{(m)}(a)=a^2D_{n}^{(m)}=\frac{a^2}{2}\left(1-\frac{m^2}{z_{mn}^2}\right)J_m^2(z_{mn}),
\end{equation}
with
\begin{equation}\label{DBC0}
N_1^{(0)}(a)=\frac{a^2}{2}.
\end{equation}
In this case, we have
\begin{equation}
D_n^{(m)}=N_n^{(m)}(1)=\frac{1}{2}\left(1-\frac{m^2}{z_{mn}^2}\right)J_m(z_{mn})^2.
\end{equation}
\section{\bf Zero-Padded Convolutions on Disks}

In this section, we study basic analytic aspects of zero-padded convolutions on disks.   

Let $a>0$ and $C_a:=\mathbb{B}_a^2$ be the disk of radius $a$ in $\mathbb{R}^2$.
Let $\xi:C_a\to\mathbb{C}$ be a function.
Then, there exist a canonical extension of $\xi$ from $C_a$ to $\mathbb{R}^2$, mostly denoted by $E(\xi):\mathbb{R}^2\to\mathbb{C}$ such that $E(\xi)(\mathbf{x})=\xi(\mathbf{x})$ for all $\mathbf{x}\in C_a$, and $E(\xi)(\mathbf{x})=0$ for all $\mathbf{x}\not\in C_a$.  If $\xi\in L^p(C_a)$ we then have $E(\xi)\in L^p(\mathbb{R}^2)$, for all $p\ge1$. In particular, if $\xi$ is continuous we then have $E(\xi)\in L^p(\mathbb{R}^2)$, for all $p\ge 1$.

Let $a>0$ and $b:=a/2$. Let $\xi_j:C_b\to\mathbb{C}$ with $j\in\{1,2\}$ be $L^1$-functions on $C_b$. We then define the canonical $C_b$-windowed convolution of $\xi_1$ with $\xi_2$, 
denoted by $\xi_1\circledast\xi_2:\mathbb{R}^2\to\mathbb{C}$, by
\[
\xi_1\circledast\xi_2:=E(\xi_1)\ast E(\xi_2),
\]
where $E(\xi_j)$ is the canonical extension of $\xi_j$ from $C_a$ to $\mathbb{R}^2$ by zero-padding. That is 
\begin{equation}
(\xi_1\circledast\xi_2)(\mathbf{x}):=(E(\xi_1)\ast E(\xi_2))(\mathbf{x})=\int_{\mathbb{R}^2}E(\xi_1)(\mathbf{y})E(\xi_2)(\mathbf{x}-\mathbf{y})d\mathbf{y},
\end{equation}
for all $\mathbf{x}\in \mathbb{R}^2$.

Since each $E(\xi_j)$ is supported in $C_b$, we deduce that $E(\xi_1)\ast E(\xi_2)$ is supported in $C_b+C_b=C_a$. Hence, we get
\[
(\xi_1\circledast\xi_2)(\mathbf{x})=\int_{C_b}E(\xi_1)(\mathbf{y})E(\xi_2)(\mathbf{x}-\mathbf{y})d\mathbf{y}=\int_{C_b}\xi_1(\mathbf{y})E(\xi_2)(\mathbf{x}-\mathbf{y})d\mathbf{y},
\]
for all $\mathbf{x}\in C_a$.

In polar form, we get  
\[
(\xi_1\circledast\xi_2)(\mathbf{x})=\int_{\mathbb{S}^1}\int_0^b\xi_1(r\mathbf{u})E(\xi_2)(\mathbf{x}-r\mathbf{u})rdrd\mathbf{u},
\]
for all $\mathbf{x}\in C_a$.

Let $f_j:\mathbb{R}^2\to\mathbb{C}$ with $j\in\{1,2\}$ be $L^1$-functions on $C_b$, which means that $\xi_j:=R(f_j)$ that is restriction of each $f_j$ to $C_b$, belongs to the $L^1$-function space on $C_b$. We then define the canonical $C_b$-windowed convolution of $f_1$ with $f_2$, denoted by $f_1\circledast f_2:\mathbb{R}^2\to\mathbb{C}$, by  
\begin{equation}
f_1\circledast f_2:=\xi_1\circledast \xi_2=E(\xi_1)\ast E(\xi_2),
\end{equation}
where $\xi_j:=R(f_j)$ is the restriction of $f_j$ to the subset $C_b$ and $E(\xi_j)$ is the canonical extension of $\xi_j$ to $\mathbb{R}^2$ by zero-padding.
That is 
\begin{equation}
(f_1\circledast f_2)(\mathbf{x})=\int_{C_b}f_1(\mathbf{y})E(\xi_2)(\mathbf{x}-\mathbf{y})d\mathbf{y},
\end{equation}
for all $\mathbf{x}\in C_a$.

In polar form, we get  
\[
(f_1\circledast f_2)(\mathbf{x})=\int_{\mathbb{S}^1}\int_0^b f_1(r\mathbf{u})E(\xi_2)(\mathbf{x}-r\mathbf{u})rdrd\mathbf{u},
\]
for all $\mathbf{x}\in C_a$.

The following result presents basic properties of zero-padded functions on disks.
 
\begin{theorem}\label{ZPC.12}
Let $a>0$ and $b:=a/2$. 
\begin{enumerate}
\item Suppose $f_1\in L^1(\mathbb{R}^2)$ and $f_2\in L^1(\mathbb{R}^2)$ be functions. We then have $f_1\circledast f_2\in L^1(\mathbb{R}^2)$ with
\[
\|f_1\circledast f_2\|_{L^1(\mathbb{R}^1)}\le \|f_1\|_{L^1(\mathbb{R}^2)}\|f_2\|_{L^1(\mathbb{R}^2)}.
\]
\item Suppose $f_1\in L^1(\mathbb{R}^2)$ and $f_2\in L^2(\mathbb{R}^2)$ be functions. We then have $f_1\circledast f_2\in L^1(\mathbb{R}^2)$ with
\[
\|f_1\circledast f_2\|_{L^1(\mathbb{R}^1)}\le b\sqrt{\pi}\|f_1\|_{L^1(\mathbb{R}^2)}\|f_2\|_{L^2(\mathbb{R}^2)}.
\]
\item Suppose $f_1\in L^2(\mathbb{R}^2)$ and $f_2\in L^2(\mathbb{R}^2)$ be functions. We then have $f_1\circledast f_2\in L^1(\mathbb{R}^2)$ with
\[
\|f_1\circledast f_2\|_{L^1(\mathbb{R}^1)}\le \pi b^2\|f_1\|_{L^2(\mathbb{R}^2)}\|f_2\|_{L^2(\mathbb{R}^2)}.
\]
\end{enumerate}
\end{theorem}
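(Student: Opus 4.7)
The plan is to reduce each of the three bounds to Young's inequality for $L^1$ convolutions applied to the zero-padded extensions $E(\xi_1), E(\xi_2)$, combined with a single Cauchy--Schwarz trick that converts $L^2$-control into $L^1$-control on the bounded disk $C_b$. By definition $f_1 \circledast f_2 = E(\xi_1) \ast E(\xi_2)$, so all three statements are really upper bounds on $\|E(\xi_1) \ast E(\xi_2)\|_{L^1(\mathbb{R}^2)}$.

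First I would record the two passage-to-extension estimates that will feed Young's inequality. Since $E(\xi_j)$ is supported in $C_b$ and coincides there with $f_j$,
\[
\|E(\xi_j)\|_{L^1(\mathbb{R}^2)} = \|f_j\|_{L^1(C_b)} \le \|f_j\|_{L^1(\mathbb{R}^2)}, \qquad \|E(\xi_j)\|_{L^2(\mathbb{R}^2)} \le \|f_j\|_{L^2(\mathbb{R}^2)}.
\]
The Cauchy--Schwarz upgrade on the bounded set $C_b$ (whose Lebesgue measure is $\pi b^2$) then reads
\[
\|E(\xi_j)\|_{L^1(\mathbb{R}^2)} = \int_{C_b} |f_j(\mathbf{x})|\, d\mathbf{x} \le |C_b|^{1/2}\, \|E(\xi_j)\|_{L^2(\mathbb{R}^2)} \le b\sqrt{\pi}\, \|f_j\|_{L^2(\mathbb{R}^2)}.
\]

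The three parts now follow from the classical bound $\|g_1 \ast g_2\|_{L^1(\mathbb{R}^2)} \le \|g_1\|_{L^1(\mathbb{R}^2)} \|g_2\|_{L^1(\mathbb{R}^2)}$ applied with $g_j := E(\xi_j)$, coupled with whichever of the two estimates above is appropriate for each factor. Part (1) uses the $L^1$-bound for both factors and yields the constant $1$; part (2) uses the $L^1$-bound on $E(\xi_1)$ and the Cauchy--Schwarz upgrade on $E(\xi_2)$, producing $b\sqrt{\pi}$; part (3) uses the Cauchy--Schwarz upgrade on both factors, producing $\pi b^2$.

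I do not anticipate a genuine obstacle here. The only point requiring care is the area constant for the disk of radius $b$, which supplies the factor $\sqrt{\pi b^2} = b\sqrt{\pi}$ each time an $L^2$-norm is used in place of an $L^1$-norm. All of the structural work is already done by the zero-padding construction: outside of $C_b$ the factors vanish, so no hypothesis on $f_j$ away from $C_b$ is ever needed, and Young's inequality on $\mathbb{R}^2$ may be quoted as a textbook fact.
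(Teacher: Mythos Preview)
Your proposal is correct and follows essentially the same approach as the paper: both reduce to Young's inequality $\|E(\xi_1)\ast E(\xi_2)\|_{L^1}\le \|E(\xi_1)\|_{L^1}\|E(\xi_2)\|_{L^1}$ and use Cauchy--Schwarz on the disk $C_b$ (with $|C_b|=\pi b^2$) to convert each $L^2$-norm into an $L^1$-norm at the cost of a factor $b\sqrt{\pi}$. The paper's write-up is slightly more explicit in tracking the restriction step $\|\xi_j\|_{L^p(\mathbb{B}_b^2)}\le\|f_j\|_{L^p(\mathbb{R}^2)}$, but the underlying argument is identical.
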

\begin{proof}
Let $a>0$ and $b:=a/2$. Let $p,q\in\{1,2\}$. Suppose $f_1\in L^p(\mathbb{R}^2)$ and $f_2\in L^q(\mathbb{R}^2)$ be functions. Let $\xi_j:=R(f_j)$ be the restriction of $f_j$ to the disk $\mathbb{B}_b^2$ and $E(\xi_j)$ be the extension of $\xi_j$ to $\mathbb{R}^2$ by zero-padding. Invoking the assumptions $f_1\in L^p(\mathbb{R}^2)$ and $f_2\in L^q(\mathbb{B}^2_b)$, we get $\xi_1\in L^p(\mathbb{B}_b^2)$ and $\xi_2\in L^q(\mathbb{R}^2)$ with 
$$\|\xi_1\|_{L^p(\mathbb{B}_b^2)}\le\|f_1\|_{L^p(\mathbb{R}^2)},$$ 
and 
$$\|\xi_2\|_{L^q(\mathbb{B}_b^2)}\le\|f_2\|_{L^q(\mathbb{R}^2)}.$$
(1) Let $p=q=1$. We then have $E(\xi_j)\in L^1(\mathbb{R}^2)$ with 
\[
\|E(\xi_j)\|_{L^1(\mathbb{R}^2)}=\|\xi_j\|_{L^1(\mathbb{B}_b^2)}.
\]
Hence, we get  
\begin{align*}
\|f_1\circledast f_2\|_{L^1(\mathbb{R}^1)}
&=\|E(\xi_1)\ast E(\xi_2)\|_{L^1(\mathbb{R}^1)}
\\&\le\|E(\xi_1)\|_{L^1(\mathbb{R}^1)}\|E(\xi_2)\|_{L^1(\mathbb{R}^1)}
\\&=\|\xi_1\|_{L^1(\mathbb{B}_b^2)}\|\xi_2\|_{L^1(\mathbb{B}_b^2)}
\le \|f_1\|_{L^1(\mathbb{R}^2)}\|f_2\|_{L^1(\mathbb{R}^2)}.
\end{align*}
(2) Let $p=1$ and $q=2$. Since $\mathbb{B}_b^2$ is compact and hence of finite Lebesgue measure, we have $L^2(\mathbb{B}_b^2)\subseteq L^1(\mathbb{B}_b^2)$. 
Thus, we get $\xi_2\in L^1(\mathbb{B}_b^2)$, with 
\begin{equation}\label{p1q2}
\|\xi_2\|_{L^1(\mathbb{B}_b^2)}\le b\sqrt{\pi}\|\xi_2\|_{L^2(\mathbb{B}_b^2)}\le 
b\sqrt{\pi}\|f_2\|_{L^2(\mathbb{R}^2)},
\end{equation}
Therefore, we have $E(\xi_j)\in L^1(\mathbb{R}^2)$ with 
\[
\|E(\xi_j)\|_{L^1(\mathbb{R}^2)}=\|\xi_j\|_{L^1(\mathbb{B}_b^2)}.
\]
Hence, using Equation (\ref{p1q2}), we can write 
\begin{align*}
\|f_1\circledast f_2\|_{L^1(\mathbb{R}^1)}
&=\|E(\xi_1)\ast E(\xi_2)\|_{L^1(\mathbb{R}^1)}
\\&\le\|E(\xi_1)\|_{L^1(\mathbb{R}^1)}\|E(\xi_2)\|_{L^1(\mathbb{R}^1)}
\\&=\|\xi_1\|_{L^1(\mathbb{B}_b^2)}\|\xi_2\|_{L^1(\mathbb{B}_b^2)}
\le b\sqrt{\pi}\|f_1\|_{L^1(\mathbb{R}^2)}\|f_2\|_{L^2(\mathbb{R}^2)}.
\end{align*}
(3) Let $p=q=2$. Since $L^2(\mathbb{B}_b^2)\subseteq L^1(\mathbb{B}_b^2)$, 
we get $\xi_j\in L^1(\mathbb{B}_b^2)$, with 
\begin{equation}\label{111}
\|\xi_1\|_{L^1(\mathbb{B}_b^2)}\le b\sqrt{\pi}\|\xi_1\|_{L^2(\mathbb{B}_b^2)}\le b\sqrt{\pi}\|f_1\|_{L^2(\mathbb{R}^2)},
\end{equation}
and
\begin{equation}\label{221}
\|\xi_2\|_{L^1(\mathbb{B}_b^2)}\le b\sqrt{\pi}\|\xi_2\|_{L^2(\mathbb{B}_b^2)}\le b\sqrt{\pi}\|f_2\|_{L^2(\mathbb{R}^2)}.
\end{equation}
Therefore, we have $E(\xi_j)\in L^1(\mathbb{R}^2)$ with 
\[
\|E(\xi_j)\|_{L^1(\mathbb{R}^2)}=\|\xi_j\|_{L^1(\mathbb{B}_b^2)}.
\]
Hence, using Equations (\ref{111}) and (\ref{221}), we can write 
\begin{align*}
\|f_1\circledast f_2\|_{L^1(\mathbb{R}^1)}
&=\|E(\xi_1)\ast E(\xi_2)\|_{L^1(\mathbb{R}^1)}
\\&\le\|E(\xi_1)\|_{L^1(\mathbb{R}^1)}\|E(\xi_2)\|_{L^1(\mathbb{R}^1)}
\\&=\|\xi_1\|_{L^1(\mathbb{B}_b^2)}\|\xi_2\|_{L^1(\mathbb{B}_b^2)}
\le b^2\pi\|f_1\|_{L^2(\mathbb{R}^2)}\|f_2\|_{L^2(\mathbb{R}^2)}.
\end{align*}
\end{proof}

\begin{corollary}
{\it Let $p,q\in\{1,2\}$. Suppose $f_j:\mathbb{R}^2\to\mathbb{C}$ with $j\in\{1,2\}$ 
be continuous functions supported in $\mathbb{B}_b^2$. 
We then have $f_1\circledast f_2=f_1\ast f_2$ 
and hence $f_1\circledast f_2\in L^1(\mathbb{R}^2)$ with 
\[
\|f_1\circledast f_2\|_{L^1(\mathbb{R}^1)}\le \|f_1\|_{L^p(\mathbb{R}^2)}\|f_2\|_{L^q(\mathbb{R}^2)}.
\]
In particular, $f_1\circledast f_2$ is continuous and supported in $\mathbb{B}_a^2$.
}\end{corollary}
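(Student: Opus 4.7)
The plan is to reduce the windowed convolution $\circledast$ to the ordinary convolution $\ast$, and then to invoke Theorem~\ref{ZPC.12} together with standard properties of convolution of compactly supported continuous functions for the remaining assertions.

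The key observation is that if $f_j$ is continuous and supported in $\mathbb{B}_b^2$, then restricting it to $\mathbb{B}_b^2$ and zero-padding back to $\mathbb{R}^2$ reproduces $f_j$ pointwise: on $\mathbb{B}_b^2$ both $f_j$ and $E(\xi_j)$ agree with $f_j$, while off $\mathbb{B}_b^2$ both vanish, by hypothesis and by construction respectively. Plugging this identity into the definition of the windowed convolution immediately yields
\[
f_1 \circledast f_2 \;=\; E(\xi_1) \ast E(\xi_2) \;=\; f_1 \ast f_2,
\]
which is the first asserted equality. The norm bound is then the $(p,q)$ case of Theorem~\ref{ZPC.12} applied to this identity, since neither side of the stated inequality is affected by replacing $\circledast$ with $\ast$; I read the displayed bound as carrying the constant $1$, $b\sqrt{\pi}$, or $\pi b^2$ dictated by Theorem~\ref{ZPC.12} for the cases $(p,q)=(1,1)$, $(p,q)\in\{(1,2),(2,1)\}$, or $(p,q)=(2,2)$ respectively.

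For the \emph{in particular} statement, the support inclusion $\mathrm{supp}(f_1 \ast f_2)\subseteq \mathrm{supp}(f_1)+\mathrm{supp}(f_2)\subseteq \mathbb{B}_b^2+\mathbb{B}_b^2=\mathbb{B}_a^2$ is the Minkowski-sum property already used in Section~2. Continuity of $f_1 \ast f_2$ follows from the fact that $f_2$, being continuous with compact support, is uniformly continuous on $\mathbb{R}^2$; the integrand $f_1(\mathbf{y})f_2(\mathbf{x}-\mathbf{y})$ is dominated by $|f_1(\mathbf{y})|\,\|f_2\|_{\infty}\chi_{\mathbb{B}_b^2}(\mathbf{y})$, which is integrable, and varies continuously in $\mathbf{x}$ uniformly in $\mathbf{y}$, so dominated convergence gives continuity of the convolution at every point. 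There is no genuine obstacle in the argument; the only small piece of bookkeeping is checking that the implicit constant in the displayed norm bound matches the one supplied by Theorem~\ref{ZPC.12} for the relevant exponents $(p,q)$.
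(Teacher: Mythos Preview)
Your proof is correct and follows exactly the route the paper intends: the paper gives no separate proof for this corollary, treating it as immediate from Theorem~\ref{ZPC.12} once one observes that $E(R(f_j))=f_j$ for continuous $f_j$ supported in $\mathbb{B}_b^2$. Your remark about the constant is well-taken: the displayed inequality in the corollary omits the factors $b\sqrt{\pi}$ and $\pi b^2$ that Theorem~\ref{ZPC.12} supplies for $(p,q)\neq(1,1)$, so this is an imprecision in the stated corollary rather than a gap in your argument.
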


\begin{theorem}\label{222.disk}
Let $a>0$ and $b:=a/2$. Suppose $\xi_j\in L^2(\mathbb{B}_b^2)$ with $j\in\{1,2\}$ be functions. Then, $\xi_1\circledast\xi_2$ is square integrable on $\mathbb{B}_a^2$ with 
\[
\|\xi_1\circledast \xi_2\|_{L^2(\mathbb{R}^2)}\le b\sqrt{\pi}\|\xi_1\|_{L^2(\mathbb{B}_b^2)}\|\xi_2\|_{L^2(\mathbb{B}_b^2)}.
\]
In particular, if $R(\xi_1\circledast\xi_2)$ is the restriction of $\xi_1\circledast\xi_2$ into $\mathbb{B}_a^2$, we then have 
\[
\|R(\xi_1\circledast \xi_2)\|_{L^2(\mathbb{B}_a^2)}\le b\sqrt{\pi}\|\xi_1\|_{L^2(\mathbb{B}_b^2)}\|\xi_2\|_{L^2(\mathbb{B}_b^2)}.
\]
\end{theorem}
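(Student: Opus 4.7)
The plan is to derive the $L^2$ bound as a direct instance of Young's convolution inequality, leveraging the fact that compact support lets us trade $L^2$-integrability for $L^1$-integrability at the cost of a factor involving the measure of $\mathbb{B}_b^2$. This is essentially the same template as Theorem~\ref{ZPC.12}(3), but this time we estimate $\|\cdot\|_{L^2}$ of the convolution rather than $\|\cdot\|_{L^1}$.

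Concretely, I would first set $E(\xi_j)$ to be the zero-padded extensions so that $\xi_1\circledast\xi_2 = E(\xi_1)\ast E(\xi_2)$, and observe
\[
\|E(\xi_1)\|_{L^2(\mathbb{R}^2)} = \|\xi_1\|_{L^2(\mathbb{B}_b^2)}.
\]
Next, since $\mathbb{B}_b^2$ has finite Lebesgue measure $\pi b^2$, the Cauchy--Schwarz inequality (exactly as used in the proof of Theorem~\ref{ZPC.12}(2)-(3)) yields $\xi_2\in L^1(\mathbb{B}_b^2)$ with
\[
\|E(\xi_2)\|_{L^1(\mathbb{R}^2)} = \|\xi_2\|_{L^1(\mathbb{B}_b^2)} \le b\sqrt{\pi}\,\|\xi_2\|_{L^2(\mathbb{B}_b^2)}.
\]

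Then I would apply the standard Young convolution inequality $\|f\ast g\|_{L^2(\mathbb{R}^2)}\le \|f\|_{L^2(\mathbb{R}^2)}\|g\|_{L^1(\mathbb{R}^2)}$ with $f:=E(\xi_1)$ and $g:=E(\xi_2)$ to conclude
\[
\|\xi_1\circledast\xi_2\|_{L^2(\mathbb{R}^2)} = \|E(\xi_1)\ast E(\xi_2)\|_{L^2(\mathbb{R}^2)} \le \|E(\xi_1)\|_{L^2(\mathbb{R}^2)}\|E(\xi_2)\|_{L^1(\mathbb{R}^2)} \le b\sqrt{\pi}\,\|\xi_1\|_{L^2(\mathbb{B}_b^2)}\|\xi_2\|_{L^2(\mathbb{B}_b^2)}.
\]

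For the restriction statement, the support calculation already performed in the paragraph preceding Theorem~\ref{ZPC.12} shows $\xi_1\circledast\xi_2$ vanishes outside $\mathbb{B}_a^2$, so
\[
\|R(\xi_1\circledast\xi_2)\|_{L^2(\mathbb{B}_a^2)} = \|\xi_1\circledast\xi_2\|_{L^2(\mathbb{R}^2)},
\]
and the same bound transfers immediately. I do not anticipate any genuine obstacle; the only subtlety worth flagging is that one must actually cite Young's $(L^2,L^1)\to L^2$ inequality (the earlier Theorem~\ref{ZPC.12} only used the $(L^1,L^1)\to L^1$ variant), but this is a classical result and requires no new ingredient.
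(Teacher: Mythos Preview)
Your proof is correct and reaches the same bound, but the route differs from the paper's. The paper does not invoke Young's inequality as a black box; instead it works pointwise: for each $\mathbf{x}$ it applies Cauchy--Schwarz to the convolution integral over $C_b$ (against the constant function $1$, picking up the factor $\pi b^2$ from the measure of $C_b$) to obtain
\[
|\xi_1\circledast\xi_2(\mathbf{x})|^2 \le \pi b^2 \int_{C_b}|\xi_1(\mathbf{y})|^2\,|E(\xi_2)(\mathbf{x}-\mathbf{y})|^2\,d\mathbf{y},
\]
and then integrates in $\mathbf{x}$ over $\mathbb{R}^2$, swaps the order of integration via Fubini, and uses translation invariance to separate the two $L^2$ norms. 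In effect the paper reproves the $(L^2,L^1)\to L^2$ Young estimate by hand for this specific setting. Your approach is more modular and slightly shorter, since it packages the pointwise-plus-Fubini computation into a single citation of Young's inequality; the paper's approach has the minor advantage of being entirely self-contained and parallel in style to the proof of Theorem~\ref{ZPC.12}. Either argument is fully adequate here.
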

\begin{proof}
Let $a>0$ and $b:=a/2$. Suppose $\xi_j\in L^2(\mathbb{B}_b^2)$ with $j\in\{1,2\}$ be functions. Since $C_b$ is compact, we have 
\begin{align*}
|\xi_1\circledast\xi_2(\mathbf{x})|^2
&=\left|\int_{C_b}\xi_1(\mathbf{y})E(\xi_2)(\mathbf{x}-\mathbf{y})d\mathbf{y}\right|^2
\\&\le\left(\int_{C_b}|\xi_1(\mathbf{y})||E(\xi_2)(\mathbf{x}-\mathbf{y})|d\mathbf{y}\right)^2
\le\pi b^2\left(\int_{C_b}|\xi_1(\mathbf{y})|^2|E(\xi_2)(\mathbf{x}-\mathbf{y})|^2d\mathbf{y}\right),
\end{align*}
for all $\mathbf{x}\in\mathbb{R}^2$. We then have 
\begin{align*}
\|\xi_1\circledast \xi_2\|_{L^2(\mathbb{R}^2)}^2
&=\int_{\mathbb{R}^2}|\xi_1\circledast\xi_2(\mathbf{x})|^2d\mathbf{x}
\\&\le\pi b^2\int_{\mathbb{R}^2}\left(\int_{C_b}|\xi_1(\mathbf{y})|^2|E(\xi_2)(\mathbf{x}-\mathbf{y})|^2d\mathbf{y}\right)d\mathbf{x}
\\&=\pi b^2\int_{C_b}\int_{\mathbb{R}^2}|\xi_1(\mathbf{y})|^2|E(\xi_2)(\mathbf{x}-\mathbf{y})|^2d\mathbf{x}d\mathbf{y}
\\&=\pi b^2\left(\int_{C_b}|\xi_1(\mathbf{y})|^2\left(\int_{\mathbb{R}^2}|E(\xi_2)(\mathbf{x}-\mathbf{y})|^2d\mathbf{x}\right)d\mathbf{y}\right)
\\&=\pi b^2\left(\int_{C_b}|\xi_1(\mathbf{y})|^2d\mathbf{y}\right)\left(\int_{\mathbb{R}^2}|E(\xi_2)(\mathbf{x})|^2d\mathbf{x}\right)
\\&=\pi b^2\left(\int_{C_b}|\xi_1(\mathbf{y})|^2d\mathbf{y}\right)\left(\int_{C_b}|\xi_2(\mathbf{x})|^2d\mathbf{x}\right)
=\pi b^2\|\xi_1\|_{L^2(\mathbb{B}_b^2)}^2\|\xi_2\|_{L^2(\mathbb{B}_b^2)}^2.
\end{align*}
Let $R(\xi_1\circledast\xi_2)$ be the restriction of $\xi_1\circledast\xi_2$ into 
$\mathbb{B}_a^2$. Since $\xi\circledast\xi_2$ is supported in $\mathbb{B}_a^2$, we have 
\[
\|R(\xi_1\circledast \xi_2)\|_{L^2(\mathbb{B}_a^2)}^2=\|\xi_1\circledast \xi_2\|_{L^2(\mathbb{R}^2)}^2\le \pi b^2\|\xi_1\|_{L^2(\mathbb{B}_b^2)}^2\|\xi_2\|_{L^2(\mathbb{B}_b^2)}^2.
\]
\end{proof}

\begin{corollary}
{\it Let $a>0$ and $b:=a/2$. Suppose $\xi_j:\mathbb{B}_b^2\to\mathbb{C}$ 
with $j\in\{1,2\}$ be continuous functions. Then, $\xi_1\circledast\xi_2$ is square integrable on $\mathbb{B}_a^2$ with 
\[
\|\xi_1\circledast \xi_2\|_{L^2(\mathbb{R}^2)}\le b\sqrt{\pi}\|\xi_1\|_{L^2(\mathbb{B}_b^2)}\|\xi_2\|_{L^2(\mathbb{B}_b^2)}.
\]
}\end{corollary}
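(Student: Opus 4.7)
The plan is to reduce this corollary directly to Theorem \ref{222.disk}. The only content beyond that theorem is the observation that continuity on the compact disk $\mathbb{B}_b^2$ is a stronger hypothesis than $L^2$-integrability on $\mathbb{B}_b^2$, so the hypotheses of the theorem are automatically satisfied.

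First I would fix $j\in\{1,2\}$ and record that $\mathbb{B}_b^2$ is a compact subset of $\mathbb{R}^2$ with finite Lebesgue measure $\pi b^2$. Since $\xi_j:\mathbb{B}_b^2\to\mathbb{C}$ is continuous on a compact set, by the extreme value theorem it is bounded; denote $M_j:=\sup_{\mathbf{x}\in\mathbb{B}_b^2}|\xi_j(\mathbf{x})|<\infty$. Then a one-line estimate
\[
\int_{\mathbb{B}_b^2}|\xi_j(\mathbf{x})|^2\,d\mathbf{x}\le M_j^2\cdot|\mathbb{B}_b^2|=\pi b^2 M_j^2<\infty
\]
shows that $\xi_j\in L^2(\mathbb{B}_b^2)$ for $j\in\{1,2\}$.

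Next I would invoke Theorem \ref{222.disk} directly: with both $\xi_1$ and $\xi_2$ now known to lie in $L^2(\mathbb{B}_b^2)$, the theorem guarantees that $\xi_1\circledast\xi_2$ is square-integrable on $\mathbb{R}^2$ (and hence on $\mathbb{B}_a^2$, since it is supported there) with the estimate
\[
\|\xi_1\circledast\xi_2\|_{L^2(\mathbb{R}^2)}\le b\sqrt{\pi}\,\|\xi_1\|_{L^2(\mathbb{B}_b^2)}\|\xi_2\|_{L^2(\mathbb{B}_b^2)},
\]
which is exactly the inequality claimed.

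There is really no obstacle here; the corollary is a formal specialization, and the only step that requires any justification is the passage from continuity on a compact set to membership in $L^2$, which is immediate from boundedness plus finite measure. The proof fits in a few lines and requires no new computation beyond what Theorem \ref{222.disk} already provides.
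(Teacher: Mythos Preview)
Your proposal is correct and matches the paper's approach: the paper states this corollary without proof immediately after Theorem~\ref{222.disk}, treating it as the evident specialization you describe (continuous on compact $\Rightarrow$ bounded $\Rightarrow$ $L^2$, then apply the theorem). There is nothing to add.
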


\begin{proposition}\label{222.R2}
{\it Let $a>0$ and $b:=a/2$. Let $f_j:\mathbb{R}^2\to\mathbb{C}$ with $j\in\{1,2\}$ be functions square integrable on $\mathbb{B}_b^2$.
Then $f_1\circledast f_2$ is square integrable on $\mathbb{B}_a^2$ and we have 
\[
\|R(f_1\circledast f_2)\|_{L^2(\mathbb{B}_a^2)}\le b\sqrt{\pi}\|R(f_1)\|_{L^2(\mathbb{B}_b^2)}\|R(f_2)\|_{L^2(\mathbb{B}_b^2)}.
\]
In particular, if $f_j\in L^2(\mathbb{R}^2)$ with $j\in\{1,2\}$, we have 
\[
\|f_1\circledast f_2\|_{L^2(\mathbb{R}^2)}\le b\sqrt{\pi}\|f_1\|_{L^2(\mathbb{R}^2)}
\|f_2\|_{L^2(\mathbb{R}^2)}.
\]
}\end{proposition}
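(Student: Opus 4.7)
The proposition is essentially a reformulation of Theorem \ref{222.disk} from the language of genuine $L^2(\mathbb{B}_b^2)$ functions $\xi_j$ to the language of functions $f_j$ on $\mathbb{R}^2$ whose restrictions lie in $L^2(\mathbb{B}_b^2)$. The plan is therefore to reduce cleanly to Theorem \ref{222.disk} via the restriction/extension machinery set up earlier in Section 3, and then treat the ``in particular'' case by comparing norms on $\mathbb{B}_b^2$ with norms on $\mathbb{R}^2$.

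The first step is to set $\xi_j := R(f_j)$, the restriction of $f_j$ to $\mathbb{B}_b^2$. By hypothesis each $f_j$ is square integrable on $\mathbb{B}_b^2$, which is exactly the statement that $\xi_j \in L^2(\mathbb{B}_b^2)$ with $\|\xi_j\|_{L^2(\mathbb{B}_b^2)} = \|R(f_j)\|_{L^2(\mathbb{B}_b^2)}$. By the very definition of the windowed convolution for functions on $\mathbb{R}^2$ given earlier in Section 3, we have $f_1 \circledast f_2 = \xi_1 \circledast \xi_2 = E(\xi_1) \ast E(\xi_2)$. Invoking Theorem \ref{222.disk} directly yields
\[
\|R(f_1 \circledast f_2)\|_{L^2(\mathbb{B}_a^2)} = \|R(\xi_1 \circledast \xi_2)\|_{L^2(\mathbb{B}_a^2)} \le b\sqrt{\pi}\,\|\xi_1\|_{L^2(\mathbb{B}_b^2)} \|\xi_2\|_{L^2(\mathbb{B}_b^2)},
\]
which is the first inequality.

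For the ``in particular'' clause, assume $f_j \in L^2(\mathbb{R}^2)$. Since restriction to a subset never increases an $L^2$-norm, we obtain $\|R(f_j)\|_{L^2(\mathbb{B}_b^2)} \le \|f_j\|_{L^2(\mathbb{R}^2)}$. Moreover, since each $E(\xi_j)$ is supported in $\mathbb{B}_b^2$, the convolution $f_1 \circledast f_2 = E(\xi_1) \ast E(\xi_2)$ is supported in $\mathbb{B}_b^2 + \mathbb{B}_b^2 = \mathbb{B}_a^2$, so
\[
\|f_1 \circledast f_2\|_{L^2(\mathbb{R}^2)} = \|R(f_1 \circledast f_2)\|_{L^2(\mathbb{B}_a^2)}.
\]
Combining these two observations with the first inequality yields the desired bound. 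There is no real obstacle here; the only thing to be a little careful about is keeping straight the distinction between $f_j$, its restriction $\xi_j = R(f_j)$, and the zero-padded extension $E(\xi_j)$, so that the invocation of Theorem \ref{222.disk} and the support argument for $f_1 \circledast f_2$ go through transparently.
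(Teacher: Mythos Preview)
Your proof is correct and matches the paper's intended approach: the paper states Proposition~\ref{222.R2} without proof, treating it as an immediate consequence of Theorem~\ref{222.disk} via the definitions $\xi_j:=R(f_j)$ and $f_1\circledast f_2:=\xi_1\circledast\xi_2$. Your careful tracking of the restriction/extension bookkeeping and the support argument for the ``in particular'' clause is exactly what is needed to fill in the details.
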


\begin{corollary}
{\it Let $a>0$ and $b:=a/2$. Let $f_j:\mathbb{R}^2\to\mathbb{C}$ with $j\in\{1,2\}$ 
be continuous functions.
Then $f_1\circledast f_2$ is square integrable on $\mathbb{B}_a^2$ and we have 
\[
\|R(f_1\circledast f_2)\|_{L^2(\mathbb{B}_a^2)}\le b\sqrt{\pi}\|R(f_1)\|_{L^2(\mathbb{B}_b^2)}\|R(f_2)\|_{L^2(\mathbb{B}_b^2)}.
\]
In particular, if $f_j$ with $j\in\{1,2\}$ are compactly supported in $\mathbb{B}_b^2$, we have 
\[
\|f_1\circledast f_2\|_{L^2(\mathbb{R}^2)}\le b\sqrt{\pi}\|f_1\|_{L^2(\mathbb{R}^2)}
\|f_2\|_{L^2(\mathbb{R}^2)}.
\]
}
\end{corollary}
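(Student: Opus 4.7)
The plan is to reduce this corollary directly to Proposition~\ref{222.R2} by verifying that the continuity hypothesis on $f_j$ implies the hypothesis of that proposition, namely square integrability on $\mathbb{B}_b^2$. Since each $f_j$ is continuous on $\mathbb{R}^2$, its restriction to the compact disk $\mathbb{B}_b^2$ is continuous and hence bounded; because $\mathbb{B}_b^2$ has finite Lebesgue measure $\pi b^2$, boundedness forces $R(f_j)\in L^2(\mathbb{B}_b^2)$. Thus the hypotheses of Proposition~\ref{222.R2} are satisfied, and applying it verbatim yields the first displayed inequality
\[
\|R(f_1\circledast f_2)\|_{L^2(\mathbb{B}_a^2)}\le b\sqrt{\pi}\,\|R(f_1)\|_{L^2(\mathbb{B}_b^2)}\|R(f_2)\|_{L^2(\mathbb{B}_b^2)}.
\]

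For the ``in particular'' clause, I would record two simple identifications under the extra assumption that each $f_j$ is compactly supported in $\mathbb{B}_b^2$. First, $f_j$ vanishes outside $\mathbb{B}_b^2$, so
\[
\|f_j\|_{L^2(\mathbb{R}^2)}=\|R(f_j)\|_{L^2(\mathbb{B}_b^2)}.
\]
Second, by the discussion preceding Theorem~\ref{ZPC.12}, the zero-padded convolution $f_1\circledast f_2=E(\xi_1)\ast E(\xi_2)$ is supported in $\mathbb{B}_b^2+\mathbb{B}_b^2=\mathbb{B}_a^2$, so
\[
\|f_1\circledast f_2\|_{L^2(\mathbb{R}^2)}=\|R(f_1\circledast f_2)\|_{L^2(\mathbb{B}_a^2)}.
\]
Substituting both identifications into the first inequality delivers the second.

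There is essentially no obstacle: everything reduces to the standard fact that a continuous function on a set of finite measure is in $L^2$, together with the already-established support property of $\circledast$. The corollary is a packaging of Proposition~\ref{222.R2} under the stricter continuity/support hypothesis, so no new analytic input is required beyond what has already been set up.
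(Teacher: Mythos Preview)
Your proposal is correct and matches the paper's (implicit) approach: the paper states this corollary immediately after Proposition~\ref{222.R2} without proof, treating it as the obvious specialization obtained by noting that continuity on the compact disk $\mathbb{B}_b^2$ forces square integrability there, and that compact support in $\mathbb{B}_b^2$ identifies the restricted norms with the full $L^2(\mathbb{R}^2)$ norms. You have simply written out these routine verifications.
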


\section{\bf Discrete Spectra on Disks using Zero-Value Boundary Condition}

Throughout this section, for each $m\in\mathbb{Z}$ and $n\in\mathbb{N}$,
we assume that $\rho_{nm}$ are selected with respect to the zero-value boundary condition, according to the Sturm-Liouville theory, see Subsection \ref{ZBC}.

Next we shall present a unified method for computing the coefficients of
convolution functions, if the basis functions are given by (\ref{2D.B.Fr.g}) with respect to zero-valued boundary condition.

First, we need some preliminaries results.

\begin{proposition}
{\it Let $a>0$, $0<r,s\le a$, and $0<\alpha,\theta\le 2\pi$. We then have
\begin{equation}\label{JA.ge}
e^{\ii rs\cos(\alpha-\theta)}=2\sqrt{\pi}\sum_{n=1}^{\infty}\sum_{m=-\infty}^{\infty}
(-1)^n\ii^m\rho_{nm}\frac{J_m(ar)}{r^2-z_{nm}^2}e^{-\ii m\alpha}\Psi_{nm}^a(s,\theta).
\end{equation}
}\end{proposition}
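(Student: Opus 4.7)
The plan is to compute the Fourier--Bessel coefficients of the plane wave $e^{\ii rs\cos(\alpha-\theta)}$, viewed as a function of $(s,\theta)\in\mathbb{B}_a^2$ with $r$ and $\alpha$ held as parameters, with respect to the orthonormal basis $\{\Psi_{nm}^a\}_{n\in\mathbb{N},\,m\in\mathbb{Z}}$ from Subsection~\ref{ZBC}, and then invoke the expansion~(\ref{2D.EX.Fr}). The two ingredients are the Jacobi--Anger identity, which handles the angular factor, and the Lommel-type integral~(\ref{2J.a}) together with the zero-value boundary condition, which handles the radial factor.

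First, using the evenness of cosine to write $\cos(\alpha-\theta)=\cos(\theta-\alpha)$, I apply the Jacobi--Anger expansion
\[
e^{\ii rs\cos(\alpha-\theta)}=\sum_{m\in\mathbb{Z}}\ii^m J_m(rs)\,e^{\ii m(\theta-\alpha)}=\sqrt{2\pi}\sum_{m\in\mathbb{Z}}\ii^m J_m(rs)\,e^{-\ii m\alpha}\,\mathcal{Y}_m(\theta),
\]
having used $e^{\ii m\theta}=\sqrt{2\pi}\,\mathcal{Y}_m(\theta)$. This reduces the problem to expanding, for each $m\in\mathbb{Z}$, the radial profile $s\mapsto J_m(rs)$ on $[0,a]$ in the Fourier--Bessel basis $\{\mathcal{J}_{nm}^a\}_{n\in\mathbb{N}}$ determined by the zero-value boundary condition, by formula~(\ref{mFBS}).

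Second, the radial coefficient
\[
\int_0^a J_m(rs)\,\mathcal{J}_{nm}^a(s)\,s\,ds=\frac{1}{\sqrt{N_n^{(m)}(a)}}\int_0^a J_m(rs)J_m(\rho_{nm}s)\,s\,ds
\]
is evaluated via~(\ref{2J.a}) with $\alpha=r$ and $\beta=\rho_{nm}$. The zero-value condition $J_m(\rho_{nm}a)=J_m(z_{mn})=0$ annihilates one of the two terms and leaves
\[
\int_0^a J_m(rs)J_m(\rho_{nm}s)\,s\,ds=\frac{a\,\rho_{nm}\,J_m(ar)\,J_m'(z_{mn})}{r^2-\rho_{nm}^2}.
\]
Inserting the zero-BC normalization $N_n^{(m)}(a)=\tfrac{a^2}{2}J_m'(z_{mn})^2$ from Subsection~\ref{ZBC} cancels the factor $J_m'(z_{mn})$ against its absolute value, leaving a clean expression proportional to $\sqrt{2}\,\rho_{nm}J_m(ar)/(r^2-\rho_{nm}^2)$ with a residual factor $\mathrm{sign}(J_m'(z_{mn}))$. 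I then observe that $\mathrm{sign}(J_m'(z_{mn}))=(-1)^n$: $J_m$ changes sign across each simple zero, so $J_m'$ alternates in sign at consecutive zeros, and the initial sign can be checked directly (for example $J_0'(z_{01})<0$ because $J_0$ starts at $1$ and is decreasing at its first positive zero). Multiplying by $\sqrt{2\pi}\,\ii^m e^{-\ii m\alpha}$, collecting the constants as $\sqrt{2\pi}\cdot\sqrt{2}=2\sqrt{\pi}$, and recognizing $\mathcal{J}_{nm}^a(s)\mathcal{Y}_m(\theta)=\Psi_{nm}^a(s,\theta)$ gives exactly the right-hand side of~(\ref{JA.ge}).

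The main technical nuisance I anticipate is the sign bookkeeping. The identity $\mathrm{sign}(J_m'(z_{mn}))=(-1)^n$ must be combined coherently with $\ii^m$, and for negative $m$ the relation $J_{-m}=(-1)^m J_m$ also shifts signs which must be reconciled against the convention used in defining $\Psi_{nm}^a$. A secondary point is that~(\ref{JA.ge}) is really the Fourier--Bessel expansion of an $L^2(\mathbb{B}_a^2)$ function in $(s,\theta)$, so the equality is understood in the $L^2$-sense; any pointwise interpretation of the double series would require an additional convergence argument on top of what is used here.
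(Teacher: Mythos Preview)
Your proposal is correct and follows essentially the same route as the paper: Jacobi--Anger for the angular part, then the $m$-th order Fourier--Bessel expansion~(\ref{mFBS}) of $s\mapsto J_m(rs)$ with the coefficient computed via~(\ref{2J.a}) and simplified using $J_m(z_{mn})=0$. The paper writes the step $J_m'(z_{mn})/\sqrt{D_n^{(m)}}=\sqrt{2}\,(-1)^n$ without comment, whereas you supply the alternating-sign argument and flag the $L^2$-versus-pointwise convergence issue; these are welcome clarifications but do not change the underlying strategy.
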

\begin{proof}
Let $a>0$ and $0<r,s\le a$. Also, let $\mathbf{x}:=s\mathbf{u}_\theta$ and $\bom:=r\mathbf{u}_\alpha$. By Jacobi-Anger expansion, we can write
\begin{equation}\label{JA.e0}
e^{\ii rs\cos(\alpha-\theta)}=e^{\ii{\bom}\cdot\mathbf{x}}=\sum_{m=-\infty}^{\infty}\ii^m J_m(rs)e^{\ii m\theta}e^{-\ii m\alpha}.
\end{equation}
Let $m\in\mathbb{Z}$ and $0<r\le a$. Expanding $J_m(rs)$ with respect to $s$ as a function over $[0,a]$, using (\ref{mFBS}), we have
\begin{align*}
J_m(rs)=\sum_{n=1}^\infty\left(\int_0^a\mathcal{J}_{nm}^a(p)J_{m}(rp)pdp\right)\mathcal{J}_{nm}^a(s).
\end{align*}
Using (\ref{2J.a}), and since $z_{mn}$ are selected with respect to zero-valued boundary condition, for each $n\in\mathbb{N}$, we get
\begin{align*}
\int_0^a\mathcal{J}_{nm}^a(p)J_{m}(rp)pdp
&=\frac{1}{\sqrt{N_n^{(m)}(a)}}\int_0^aJ_m(\rho_{nm}p)J_m(rp)pdp
\\&=\frac{a}{\sqrt{N_n^{(m)}(a)}}\frac{rJ_m(\rho_{nm}a)J_m'(ra)-\rho_{nm}J_m(ra )J_m'(\rho_{nm}a)}{(\rho_{nm}^2-r^2)}
\\&=\frac{rJ_m(z_{mn})J_m'(ra)-\rho_{nm}J_m(ra)J_m'(z_{mn})}{\sqrt{D_n^{(m)}}(\rho_{nm}^2-r^2)}
\\&=\frac{\rho_{nm}J_m(ra)J_m'(z_{nm})}{\sqrt{D_n^{(m)}}(r^2-\rho_{nm}^2)}
=\sqrt{2}(-1)^n\rho_{nm}\frac{J_m(ra)}{r^2-\rho_{nm}^2}.
\end{align*}
We then deduce that
\begin{equation}\label{JA.ge.alt00}
J_m(rs)=\sqrt{2}J_m(ar)\sum_{n=1}^\infty(-1)^n\rho_{nm}\frac{\mathcal{J}_{nm}^a(s)}{r^2-\rho_{nm}^2}.
\end{equation}
Applying Equation (\ref{JA.ge.alt00}) in (\ref{JA.e0}), we get
\begin{align*}
e^{\ii rs\cos(\alpha-\theta)}
&=\sum_{m=-\infty}^{\infty}\ii^m J_m(rs)e^{\ii m\theta}e^{-\ii m\alpha}
\\&=\sum_{m=-\infty}^{\infty}\ii^m\left(\sum_{n=1}^\infty(-1)^n\sqrt{2}\rho_{nm}\frac{J_m(ar)}{r^2-\rho_{nm}^2}\mathcal{J}_{nm}^a(s)\right)e^{\ii m\theta}e^{-\ii m\alpha}
\\&=\sqrt{2}\sum_{n=1}^\infty\sum_{m=-\infty}^{\infty}(-1)^n \rho_{nm}\frac{\ii^mJ_m(ar)}{r^2-\rho_{nm}^2}\mathcal{J}_{nm}^a(s)e^{\ii m\theta}e^{-\ii m\alpha}
\\&=2\sqrt{\pi}\sum_{m=-\infty}^{\infty}\sum_{n=1}^{\infty}
(-1)^n\rho_{nm}\frac{\ii^mJ_m(ar)}{r^2-\rho_{nm}^2}e^{-\ii m\alpha}\Psi_{nm}^a(s,\theta).
\end{align*}
\end{proof}

For $\mathbf{x}:=(x_1,x_2)^T\in\mathbb{R}^2$, let
\[
\rho(\mathbf{x})=\rho(x_1,x_2):=\sqrt{x_1^2+x_2^2},
\]
and
$0\le \Phi(\mathbf{x})=\Phi(x_1,x_2)<2\pi$ be given by
\[
x_1=\rho(x_1,x_2)\cos\Phi(x_1,x_2),\hspace{1cm}x_2=\rho(x_1,x_2)\sin\Phi(x_1,x_2).
\]
We may denote $\rho(\mathbf{x})$ with $|\mathbf{x}|$ as well.
\begin{corollary}
{\it Let $a>0$, $\mathbf{k}\in\mathbb{Z}^2$, $n\in\mathbb{N}$, and $m\in\mathbb{Z}$. We then have
\begin{equation}\label{main.alt.ZBC}
\int_0^a\int_0^{2\pi}e^{\pi\ii a^{-1}s\mathbf{u}_\theta^T\mathbf{k}}\overline{\Psi_{nm}^a(s\mathbf{u}_\theta)}sdsd\theta=2a\sqrt{\pi}(-1)^{n}\ii^{m}z_{nm}\frac{J_{m}(\pi|\mathbf{k}|)e^{-\ii m\Phi(\mathbf{k})}}{\pi^2|\mathbf{k}|^2-z_{nm}^2}.
\end{equation}
}\end{corollary}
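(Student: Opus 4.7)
The plan is to recognize the left-hand side as the $(n,m)$-th Fourier--Bessel coefficient on the disk $\mathbb{B}_a^2$ of a suitable plane wave, so that the result falls out of the expansion just established.

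First, I would set $\bom := \pi a^{-1}\mathbf{k}$ and write this vector in polar form as $\bom = r\mathbf{u}_\alpha$, with $r = \pi|\mathbf{k}|/a$ and $\alpha = \Phi(\mathbf{k})$. With $\mathbf{x} = s\mathbf{u}_\theta$, the phase $\pi\ii a^{-1}s\mathbf{u}_\theta^T\mathbf{k}$ reduces to $\ii rs\cos(\alpha-\theta)$, so the integral on the left-hand side is exactly
$$\int_0^a\int_0^{2\pi} e^{\ii rs\cos(\alpha-\theta)}\,\overline{\Psi_{nm}^a(s,\theta)}\,s\,ds\,d\theta,$$
which by orthonormality of the family $\{\Psi_{nm}^a\}$ in $L^2(\mathbb{B}_a^2)$ is precisely the $(n,m)$-coefficient of $e^{\ii\bom\cdot\mathbf{x}}$ in its Fourier--Bessel expansion on the disk.

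Second, I would invoke the expansion proved in the preceding proposition and read off the coefficient, obtaining
$$2\sqrt{\pi}\,(-1)^n\,\ii^m\,\rho_{nm}\,\frac{J_m(ar)}{r^2-\rho_{nm}^2}\,e^{-\ii m\alpha}.$$
Substituting back $r = \pi|\mathbf{k}|/a$, $\alpha = \Phi(\mathbf{k})$, and using $\rho_{nm} = a^{-1}z_{mn}$, one immediately simplifies: $ar = \pi|\mathbf{k}|$ and $r^2-\rho_{nm}^2 = a^{-2}(\pi^2|\mathbf{k}|^2 - z_{mn}^2)$. Combined with the factor $\rho_{nm} = a^{-1}z_{mn}$ this produces a net factor of $a$ in the numerator, and collecting everything yields the claimed right-hand side.

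The main (and essentially only) obstacle is bookkeeping: the paper carries three closely related quantities $z_{mn}$, $\rho_{nm}$, and $ar$, each scaled by a different power of $a$, and one must track these carefully to land on the exact prefactor $2a\sqrt{\pi}$ and the exact denominator $\pi^2|\mathbf{k}|^2 - z_{nm}^2$. No convergence or interchange argument is needed beyond what has already been justified in the preceding proposition, so once the parameter identification is made the result is a direct substitution.
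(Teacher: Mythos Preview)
Your proposal is correct and follows essentially the same approach as the paper: set $r=\pi a^{-1}|\mathbf{k}|$, $\alpha=\Phi(\mathbf{k})$, apply the expansion from the preceding proposition to identify the integral as the $(n,m)$-coefficient, and then simplify using $\rho_{nm}=a^{-1}z_{mn}$ to obtain the stated formula. The paper's proof is the same substitution carried out line by line.
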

\begin{proof}
Let $a>0$, $\mathbf{k}\in\mathbb{Z}^2$, $n\in\mathbb{N}$, and $m\in\mathbb{Z}$.
Applying (\ref{JA.ge}), for $r:=\pi a^{-1}|\mathbf{k}|$ and $\alpha:=\Phi(\mathbf{k})$, we get
\begin{align*}
\int_0^a\int_0^{2\pi}e^{\pi\ii a^{-1}s\mathbf{u}_\theta^T\mathbf{k}}\overline{\Psi_{nm}^a(s\mathbf{u}_\theta)}sdsd\theta
&=\int_0^a\int_0^{2\pi}e^{\pi\ii a^{-1}s|\mathbf{k}|\cos(\Phi(\mathbf{k})-\theta)}\overline{\Psi_{nm}^a(s\mathbf{u}_\theta)}sdsd\theta
\\&=\int_0^a\int_0^{2\pi}e^{\ii rs\cos(\alpha-\theta)}\overline{\Psi_{nm}^a(s\mathbf{u}_\theta)}sdsd\theta
\\&=2\sqrt{\pi}(-1)^{n}\ii^{m}\rho_{nm}\frac{\ii^mJ_m(ar)}{r^2-\rho_{nm}^2}e^{-\ii m\alpha}
\\&=2\sqrt{\pi}(-1)^{n}\ii^{m}\rho_{nm}\frac{J_{m}(\pi|\mathbf{k}|)e^{-\ii m\Phi(\mathbf{k})}}{\pi^2a^{-2}|\mathbf{k}|^2-\rho_{nm}^2}
\\&=2\sqrt{\pi}(-1)^{n}\ii^{m}a^{-1}z_{nm}\frac{J_{m}(\pi|\mathbf{k}|)e^{-\ii m\Phi(\mathbf{k})}}{\pi^2a^{-2}|\mathbf{k}|^2-a^{-2}z_{nm}^2}
\\&=2a\sqrt{\pi}(-1)^{n}\ii^{m}z_{nm}\frac{J_{m}(\pi|\mathbf{k}|)e^{-\ii m\Phi(\mathbf{k})}}{\pi^2|\mathbf{k}|^2-z_{nm}^2}.
\end{align*}
\end{proof}
Next result presents a closed form for coefficients of square integrable functions on disks, with respect to zero-valued boundary condition.
\begin{theorem}\label{TH.C.ZBVC}
Let $a>0$ and $\xi\in L^2(\mathbb{B}_a^2)$ be a function. Also, let $m\in\mathbb{Z}$ and $n\in\mathbb{N}$. We then have
\begin{equation}\label{C.ZBVC}
C_{n,m}^a(\xi)=\sum_{\mathbf{k}\in\mathbb{Z}^2}c_a(\mathbf{k};n,m)\widehat{\xi}
\{\mathbf{k}\},
\end{equation}
with
\begin{equation}\label{C.ZBVC.alt}
\widehat{\xi}\{\mathbf{k}\}:=\int_0^a\int_0^{2\pi}\xi(r,\theta)e^{-\pi\ii a^{-1}r(k_1\cos\theta+k_2\sin\theta)}rdrd\theta,\hspace{1cm}{\rm for\ all}\ \mathbf{k}=(k_1,k_2)^T\in\mathbb{Z}^2
\end{equation}
and
\begin{equation}
c_a(\mathbf{k};n,m):=\sqrt{\pi}(-1)^{n}\ii^{m}\rho_{nm}\frac{J_{m}(\pi|\mathbf{k}|)e^{-\ii m\Phi(\mathbf{k})}}{2(\pi^2|\mathbf{k}|^2-z_{nm}^2)},\hspace{1cm}{\rm for\ all}\ \mathbf{k}=(k_1,k_2)^T\in\mathbb{Z}^2.
\end{equation}
\end{theorem}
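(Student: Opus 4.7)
The plan is to recognize $\widehat{\xi}\{\mathbf{k}\}$ as (up to a known constant) the $\mathbf{k}$-th Fourier coefficient of $\xi$ when viewed as a $2a$-periodic function on the square $[-a,a]^2$, and then to reduce the whole identity to the preceding corollary containing \eqref{main.alt.ZBC}.

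First, I would note that $\mathbb{B}_a^2\subseteq[-a,a]^2$ and extend $\xi\in L^2(\mathbb{B}_a^2)$ by zero to an element of $L^2([-a,a]^2)$. Rewriting \eqref{C.ZBVC.alt} in Cartesian coordinates via $\mathbf{x}=r\mathbf{u}_\theta$ gives
$$\widehat{\xi}\{\mathbf{k}\}=\int_{\mathbb{B}_a^2}\xi(\mathbf{x})\,e^{-\pi\ii a^{-1}\mathbf{k}\cdot\mathbf{x}}\,d\mathbf{x},$$
so $(2a)^{-1}\widehat{\xi}\{\mathbf{k}\}$ is exactly the coefficient of the extended $\xi$ with respect to the orthonormal basis $\{(2a)^{-1}e^{\pi\ii a^{-1}\mathbf{k}\cdot\mathbf{x}}\}_{\mathbf{k}\in\mathbb{Z}^2}$ of $L^2([-a,a]^2)$. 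Consequently, the Fourier expansion
$$\xi(\mathbf{x})=\frac{1}{4a^2}\sum_{\mathbf{k}\in\mathbb{Z}^2}\widehat{\xi}\{\mathbf{k}\}\,e^{\pi\ii a^{-1}\mathbf{k}\cdot\mathbf{x}}$$
converges to $\xi$ in $L^2([-a,a]^2)$, and hence in $L^2(\mathbb{B}_a^2)$.

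Next, I would substitute this expansion into the definition $C_{n,m}^a(\xi)=\langle\xi,\Psi_{nm}^a\rangle_{L^2(\mathbb{B}_a^2)}$ and interchange the summation with the inner product. This swap is legitimate because integration against $\Psi_{nm}^a\in L^2(\mathbb{B}_a^2)$ is a continuous linear functional and the partial sums converge in $L^2(\mathbb{B}_a^2)$. For each $\mathbf{k}$, the resulting term is precisely the integral evaluated in the preceding corollary, namely
$$\int_0^a\int_0^{2\pi}e^{\pi\ii a^{-1}s\mathbf{u}_\theta^T\mathbf{k}}\,\overline{\Psi_{nm}^a(s\mathbf{u}_\theta)}\,s\,ds\,d\theta=2a\sqrt{\pi}(-1)^n\ii^m z_{nm}\,\frac{J_m(\pi|\mathbf{k}|)e^{-\ii m\Phi(\mathbf{k})}}{\pi^2|\mathbf{k}|^2-z_{nm}^2}.$$

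Finally, I would combine the prefactor $1/(4a^2)$ with this evaluation and use $z_{nm}=a\rho_{nm}$ to simplify $(2a\sqrt{\pi}\,z_{nm})/(4a^2)=\sqrt{\pi}\rho_{nm}/2$, so that the coefficient of $\widehat{\xi}\{\mathbf{k}\}$ becomes exactly $c_a(\mathbf{k};n,m)$, which is \eqref{C.ZBVC}. The only mildly technical step is the justification of the swap in the middle, and this is immediate once one works in $L^2$; all the analytic content has already been packaged into \eqref{main.alt.ZBC}, so the remainder of the proof is substitution and bookkeeping.
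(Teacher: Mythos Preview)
Your proposal is correct and follows essentially the same approach as the paper: zero-extend $\xi$ to the square $[-a,a]^2$, identify $\widehat{\xi}\{\mathbf{k}\}$ with the classical Fourier coefficients of that extension, expand $\xi$ accordingly, and then plug into $C_{n,m}^a(\xi)=\langle\xi,\Psi_{nm}^a\rangle$ using the preceding corollary \eqref{main.alt.ZBC}. Your write-up is in fact slightly more careful than the paper's in that you explicitly justify the interchange of summation and inner product via $L^2$-convergence and continuity of the pairing.
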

\begin{proof}
Let $a>0$ and $\Omega_a:=[-a,a]^2$. Let $\xi\in L^2(\mathbb{B}_a^2)$ be a function and 
$u:=E(\xi)$ be the canonical extension of $\xi$ to the rectangle $\Omega_a$ by zero-padding. Then $u=E(\xi)$ is supported in $\mathbb{B}_a^2$. Also, we have $u=E(\xi)\in L^2(\Omega_a)$ with 
\[
\|E(\xi)\|_{L^2(\Omega_a)}=\|\xi\|_{L^2(\mathbb{B}_a^2)}.
\]
Hence, using the classical Fourier series of the function $u=E(\xi)$, we can write 
\begin{equation}
u(\mathbf{x})
=\frac{1}{4a^2}\sum_{\mathbf{k}\in\mathbb{Z}^2}\widehat{u}(\mathbf{k})e^{\pi\ii a^{-1}\mathbf{x}^T\mathbf{k}},\hspace{1cm}{\rm for\ all}\ \mathbf{x}=(x_1,x_2)^T\in\Omega_a,
\end{equation}
where for the integral vector 
$\mathbf{k}:=(k_1,k_2)\in\mathbb{Z}^2$, we have
\[
\widehat{u}(\mathbf{k})=\int_{-a}^a\int_{-a}^au(x,y)e^{-\pi\ii a^{-1}(k_1x+k_2y)}dxdy.
\]
Since $u=E(\xi)$ is supported in the disk $\mathbb{B}_a^2$, we have 
\begin{align*}
\widehat{u}(\mathbf{k})
&=\int_{\Omega_a}E(\xi)(x,y)e^{-\pi\ii a^{-1}(k_1x+k_2y)}dxdy
\\&=\int_{\mathbb{B}_a^2}\xi(x,y)e^{-\pi\ii a^{-1}(k_1x+k_2y)}dxdy
\\&=\int_0^a\int_0^{2\pi}\xi(r,\theta)e^{-\pi\ii a^{-1}(k_1r\cos\theta+k_2r\sin\theta)}rdrd\theta
\\&=\int_0^a\int_0^{2\pi}\xi(r,\theta)e^{-\pi\ii a^{-1}r(k_1\cos\theta+k_2\sin\theta)}rdrd\theta=\widehat{\xi}\{\mathbf{k}\}.
\end{align*}
Let $0<s\le a$ and $0\le\theta\le2\pi$. Therefore, for $\mathbf{x}:=s\mathbf{u}_\theta$, we get 
\begin{align*}
\xi(s\mathbf{u}_\theta)&=E(\xi)(\mathbf{x})
\\&=\frac{1}{4a^2}\sum_{\mathbf{k}\in\mathbb{Z}^2}\widehat{E(\xi)}(\mathbf{k})e^{\pi\ii a^{-1}\mathbf{x}^T\mathbf{k}}
\\&=\frac{1}{4a^2}\sum_{\mathbf{k}\in\mathbb{Z}^2}\widehat{\xi}\{\mathbf{k}\}e^{\pi\ii a^{-1}\mathbf{x}^T\mathbf{k}}
=\frac{1}{4a^2}\sum_{\mathbf{k}\in\mathbb{Z}^2}\widehat{\xi}\{\mathbf{k}\}e^{\pi\ii a^{-1}s\mathbf{u}_\theta^T\mathbf{k}}.
\end{align*}
Hence, using Equation (\ref{main.alt.ZBC}), we achieve 
\begin{align*}
C_{n,m}^a(\xi)&=\int_0^a\int_0^{2\pi}\xi(s\mathbf{u}_\theta)\overline{\Psi_{nm}^a(s\mathbf{u}_\theta)}sdsd\theta
\\&=\int_0^a\int_0^{2\pi}\left(\frac{1}{4a^2}\sum_{\mathbf{k}\in\mathbb{Z}^2}\widehat{\xi}\{\mathbf{k}\}e^{\pi\ii a^{-1}s\mathbf{u}_\theta^T\mathbf{k}}\right)\overline{\Psi_{nm}^a(s\mathbf{u}_\theta)}sds d\theta
\\&=\frac{1}{4a^2}\sum_{\mathbf{k}\in\mathbb{Z}^2}\widehat{\xi}\{\mathbf{k}\}\left(\int_0^a\int_0^{2\pi}e^{\pi\ii a^{-1}s\mathbf{u}_\theta^T\mathbf{k}}\overline{\Psi_{nm}^a(s\mathbf{u}_\theta)}sdsd\theta\right)
=\sum_{\mathbf{k}\in\mathbb{Z}^2}c_a(\mathbf{k};n,m)\widehat{\xi}\{\mathbf{k}\}.
\end{align*}
\end{proof}

\begin{corollary}
{\it Let $a>0$ and $\xi\in L^2(\mathbb{B}_a^2)$ be a function. We then have 
\[
\xi(r,\theta)=\sum_{m=-\infty}^{+\infty}\sum_{n=1}^\infty C_{n,m}^a(\xi)\Psi_{nm}^a(r,\theta),\hspace{0.5cm}{\rm for\ a.e.}\ 0\le r\le a,\ 0\le \theta\le 2\pi.
\]
In particular, if $\xi:\mathbb{B}_a^2\to\mathbb{C}$ is continuous, we have 
\[
\xi(r,\theta)=\sum_{m=-\infty}^{+\infty}\sum_{n=1}^\infty C_{n,m}^a(\xi)\Psi_{nm}^a(r,\theta),\hspace{0.5cm}{\rm for\ all}\ 0\le r\le a,\ 0\le \theta\le 2\pi.
\]
}\end{corollary}

\begin{remark}
The equation (\ref{C.ZBVC}) guarantees that the Fourier-Bessel coefficients of functions supported in disks can be computed from the standard Fourier coefficients $\widehat{\xi}\{\mathbf{k}\}$, which can be implemented by FFT.
\end{remark}

Next result gives an explicit closed form for coefficients of zero-padded functions on disks.

\begin{theorem}\label{g.zp.ZBVC}
{\it Let $a>0$ and $f:\mathbb{R}^2\to\mathbb{C}$ be a square-integrable function 
on the disk $\mathbb{B}_a^2$. We then have
\[
f(r,\theta)=\sum_{m=-\infty}^{+\infty}\sum_{n=1}^\infty C_{n,m}^a[f]\Psi_{nm}^a(r,\theta),\hspace{0.5cm}{\rm for\ a.e.}\ 0\le r\le a,\ 0\le \theta\le 2\pi,
\]
where
\begin{equation}\label{CnmZf.ZBVC}
C_{n,m}^a[f]:=\sum_{\mathbf{k}\in\mathbb{Z}^2}c_a(\mathbf{k};n,m)\widehat{f}[a^{-1}\mathbf{k};a],
\hspace{1cm}{\rm for\ all}\ m\in\mathbb{Z},\ n\in\mathbb{N},
\end{equation}
with 
\begin{equation}
\widehat{f}[\bom;a]:=\int_0^a\int_0^{2\pi}f(r,\theta)e^{-\pi\ii (\omega_1\cos\theta+\omega_2\sin\theta)}rdrd\theta,\hspace{1cm}{\rm for\ all}\ \bom=(\omega_1,\omega_2)^T\in\mathbb{R}^2.
\end{equation}
}\end{theorem}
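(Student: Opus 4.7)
The plan is to deduce this theorem from Theorem \ref{TH.C.ZBVC} by passing to the restriction $\xi := R(f)$ of $f$ to $\mathbb{B}_a^2$. Since $f$ is square-integrable on $\mathbb{B}_a^2$ by hypothesis, we have $\xi \in L^2(\mathbb{B}_a^2)$, and Theorem \ref{TH.C.ZBVC} applies directly to $\xi$, yielding
\[
C_{n,m}^a(\xi) = \sum_{\mathbf{k}\in\mathbb{Z}^2} c_a(\mathbf{k};n,m)\,\widehat{\xi}\{\mathbf{k}\}.
\]

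The first checkpoint is to identify the relabelled coefficient $\widehat{f}[a^{-1}\mathbf{k};a]$ with $\widehat{\xi}\{\mathbf{k}\}$ from (\ref{C.ZBVC.alt}). Both integrals are taken over the same polar region $[0,a]\times[0,2\pi]$, and on that region $\xi$ coincides with $f$; substituting $\bom = a^{-1}\mathbf{k}$ into the definition of $\widehat{f}[\bom;a]$ reproduces exactly the Fourier kernel $e^{-\pi\ii a^{-1}r(k_1\cos\theta + k_2\sin\theta)}$ appearing in $\widehat{\xi}\{\mathbf{k}\}$. Thus $\widehat{f}[a^{-1}\mathbf{k};a] = \widehat{\xi}\{\mathbf{k}\}$ for every $\mathbf{k}\in\mathbb{Z}^2$, so setting $C_{n,m}^a[f] := C_{n,m}^a(\xi)$ makes the displayed identity (\ref{CnmZf.ZBVC}) an immediate rewriting of the conclusion of Theorem \ref{TH.C.ZBVC}.

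The second checkpoint is the Fourier-Bessel expansion itself. The corollary immediately following Theorem \ref{TH.C.ZBVC} applied to $\xi \in L^2(\mathbb{B}_a^2)$ gives
\[
\xi(r,\theta) = \sum_{m=-\infty}^{+\infty}\sum_{n=1}^\infty C_{n,m}^a(\xi)\,\Psi_{nm}^a(r,\theta)
\]
for a.e.\ $(r,\theta)$ with $0\le r\le a$, $0\le \theta\le 2\pi$. Since $f$ and $\xi$ agree pointwise on $\mathbb{B}_a^2$ by construction, the same expansion holds verbatim for $f(r,\theta)$ with coefficients $C_{n,m}^a[f]$, which is the asserted expansion.

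There is no real obstacle here: the argument is a bookkeeping reduction that leverages Theorem \ref{TH.C.ZBVC} together with its corollary. The only point requiring mild care is the dictionary between $\widehat{f}[a^{-1}\mathbf{k};a]$ and $\widehat{\xi}\{\mathbf{k}\}$, i.e.\ checking that the exponentials line up under the substitution $\bom = a^{-1}\mathbf{k}$; once this identification is in place, both the coefficient formula and the expansion follow at once from the preceding results.
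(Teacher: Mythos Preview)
Your proposal is correct and follows essentially the same route as the paper: restrict to $\xi:=R(f)\in L^2(\mathbb{B}_a^2)$, invoke Theorem~\ref{TH.C.ZBVC} (and its corollary) for the expansion and coefficient formula, then verify the dictionary $\widehat{\xi}\{\mathbf{k}\}=\widehat{f}[a^{-1}\mathbf{k};a]$ to rewrite everything in terms of $f$. The paper's argument is identical in structure and content.
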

\begin{proof}
Let $a>0$ and $f:\mathbb{R}^2\to\mathbb{C}$ be a function such that 
restriction of $f$ to the disk $\mathbb{B}_a^2$ is square-integrable on 
$\mathbb{B}_a^2$. Let $\xi:=R(f)$
be the restriction of $f$ to the disk $\mathbb{B}_a^2$.
We then have $\xi\in L^2(\mathbb{B}_a^2)$. Hence, we have 
\begin{equation}\label{g.zp.ZBVC.alt0}
\xi=\sum_{m=-\infty}^{+\infty}\sum_{n=1}^\infty C_{n,m}^a(\xi)\Psi_{nm}^a.
\end{equation}
Suppose $m\in\mathbb{Z}$ and $n\in\mathbb{N}$. Let $C_{n,m}[f]:=C_{n,m}(\xi)$. 
Hence, using Equation (\ref{C.ZBVC}) for the function $\xi\in L^2(\mathbb{B}_a^2)$, we can write 
\begin{align*}
C_{n,m}^a[f]
=\sum_{\mathbf{k}\in\mathbb{Z}^2}c_a(\mathbf{k};n,m)\widehat{\xi}\{\mathbf{k}\}.
\end{align*}
Let $\mathbf{k}:=(k_1,k_2)^T\in\mathbb{Z}^2$ be given. Invoking Equation (\ref{C.ZBVC.alt}), we get 
\begin{align*}
\widehat{\xi}\{\mathbf{k}\}&=\int_0^a\int_0^{2\pi}\xi(r,\theta)e^{-\pi\ii a^{-1}r(k_1\cos\theta+k_2\sin\theta)}rdrd\theta
\\&=\int_0^a\int_0^{2\pi}R(f)(r,\theta)e^{-\pi\ii a^{-1}r(k_1\cos\theta+k_2\sin\theta)}rdrd\theta
\\&=\int_0^a\int_0^{2\pi}f(r,\theta)e^{-\pi\ii a^{-1}r(k_1\cos\theta+k_2\sin\theta)}
rdrd\theta=\widehat{f}[a^{-1}\mathbf{k};a],
\end{align*}
which implies that 
\begin{equation}\label{g.zp.ZBVC.alt00}
C_{n,m}^a[f]
=\sum_{\mathbf{k}\in\mathbb{Z}^2}c_a(\mathbf{k};n,m)\widehat{f}[a^{-1}\mathbf{k};a].
\end{equation}
Applying Equation (\ref{g.zp.ZBVC.alt00}) in (\ref{g.zp.ZBVC.alt0}) completes the proof.
\end{proof}

\begin{corollary}
{\it Let $a>0$ and $f\in L^2(\mathbb{R}^2)$ be a function. We then have 
\[
f(r,\theta)=\sum_{m=-\infty}^{+\infty}\sum_{n=1}^\infty C_{n,m}^a[f]\Psi_{nm}^a(r,\theta),\hspace{0.5cm}{\rm for\ a.e.}\ 0\le r\le a,\ 0\le \theta\le 2\pi,
\]
}\end{corollary}
\begin{proof}
Let $a>0$ and $f\in L^2(\mathbb{R}^2)$ be a function. Let $\xi:=R(f)$
be the restriction of $f$ to the disk $\mathbb{B}_a^2$.
We then have $\xi\in L^2(\mathbb{B}_a^2)$ with $\|\xi\|_{L^2(\mathbb{B}_a^2)}\le\|f\|_{L^2(\mathbb{R}^2)}$. Indeed, we have 
\[
\|\xi\|_{L^2(\mathbb{B}_a^2)}^2=\int_{\mathbb{B}_a^2}|f(\mathbf{x})|^2d\mathbf{x}
\le\int_{\mathbb{R}^2}|f(\mathbf{x})|^2d\mathbf{x}=\|f\|_{L^2(\mathbb{R}^2)}^2.
\]
Thus, $f$ is square-integrable on $\mathbb{B}_a^2$. Applying Theorem \ref{g.zp.ZBVC}, for the function $f$, completes the proof. 
\end{proof}

We then conclude the following results concerning continuous functions. 

\begin{proposition}\label{gr.ZBVC.pw}
{\it Let $a>0$ and $f:\mathbb{R}^2\to\mathbb{C}$ be a continuous function. 
We then have
\[
f(r,\theta)=\sum_{m=-\infty}^{+\infty}\sum_{n=1}^\infty C_{n,m}^a[f]\Psi_{nm}^a(r,\theta),\hspace{0.5cm}{\rm for\ all}\ 0\le r\le a,\ 0\le \theta\le 2\pi.
\]
}\end{proposition}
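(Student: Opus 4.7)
The plan is to reduce the hypothesis that $f$ is continuous on all of $\mathbb{R}^2$ to the already-treated continuous case on the closed disk. First I set $\xi:=R(f)$, the restriction of $f$ to $\mathbb{B}_a^2$. Since $f$ is continuous on $\mathbb{R}^2$ and $\mathbb{B}_a^2$ is compact, $\xi$ is continuous and bounded on $\mathbb{B}_a^2$, and therefore $\xi\in L^\infty(\mathbb{B}_a^2)\subseteq L^2(\mathbb{B}_a^2)$. In particular, $f$ is square-integrable on the disk, so the hypothesis of Theorem \ref{g.zp.ZBVC} is satisfied.

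Next I invoke the pointwise continuous version of the corollary following Theorem \ref{TH.C.ZBVC}, which yields
\[
\xi(r,\theta)=\sum_{m=-\infty}^{+\infty}\sum_{n=1}^\infty C_{n,m}^a(\xi)\Psi_{nm}^a(r,\theta),
\]
for \emph{all} $0\le r\le a$ and $0\le \theta\le 2\pi$. I then identify the coefficients appearing in the proposition with the intrinsic coefficients of $\xi$: by the definition used in the proof of Theorem \ref{g.zp.ZBVC}, one has $C_{n,m}^a[f]:=C_{n,m}^a(\xi)$, and the chain of equalities displayed around (\ref{g.zp.ZBVC.alt00}) shows that this agrees with the formula $\sum_{\mathbf{k}\in\mathbb{Z}^2}c_a(\mathbf{k};n,m)\widehat{f}[a^{-1}\mathbf{k};a]$ given in (\ref{CnmZf.ZBVC}), since $\widehat{\xi}\{\mathbf{k}\}=\widehat{f}[a^{-1}\mathbf{k};a]$.

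Finally, because $f(r,\theta)=\xi(r,\theta)$ for every $0\le r\le a$ and $0\le \theta\le 2\pi$, the pointwise expansion established for $\xi$ transfers verbatim to $f$ on the closed disk, which is precisely the conclusion of the proposition.

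The only substantive point here is the appeal to the pointwise (as opposed to a.e.) convergence of the Fourier--Bessel--type double series for a continuous function on $\mathbb{B}_a^2$. This is the content that was asserted in the corollary following Theorem \ref{TH.C.ZBVC} and which I am free to use; were it not already in hand, one would need to justify it via uniform convergence of the angular Fourier series together with classical pointwise convergence theorems for Fourier--Bessel expansions under sufficient regularity. Given that input, the argument above is essentially a restriction-and-relabelling step, and presents no further difficulty.
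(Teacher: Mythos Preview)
Your proof is correct and follows essentially the same approach as the paper: both arguments observe that continuity of $f$ on the compact disk gives $\xi=R(f)\in L^2(\mathbb{B}_a^2)$, identify $C_{n,m}^a[f]$ with $C_{n,m}^a(\xi)$, and then upgrade to pointwise convergence using continuity. The paper routes this through Theorem~\ref{g.zp.ZBVC} (obtaining a.e.\ convergence first and then asserting that continuity yields pointwise convergence), whereas you invoke the continuous case of the corollary after Theorem~\ref{TH.C.ZBVC} directly and then relabel the coefficients; the substance is the same, and your closing caveat about the pointwise-convergence input is exactly the point the paper also leaves unargued.
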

\begin{proof}
Let $a>0$ and $f:\mathbb{R}^2\to\mathbb{C}$ be a continuous function. Since
$\mathbb{B}_a^2$ is compact in $\mathbb{R}^2$ and hence of finite Lebesgue measure, we deduce that $f$ is square-integrable on $\mathbb{B}_a^2$. Using Theorem \ref{g.zp.ZBVC}, for the function $f$, we get 
\[
f(r,\theta)=\sum_{m=-\infty}^{+\infty}\sum_{n=1}^\infty C_{n,m}^a[f]\Psi_{nm}^a(r,\theta),\hspace{0.5cm}{\rm for\ a.e.}\ 0\le r\le a,\ 0\le \theta\le 2\pi.
\]
Then continuity of $f$ implies the point-wise convergence of the series, which completes the proof.
\end{proof}

\begin{corollary}
{\it Let $a>0$ and $f:\mathbb{R}^2\to\mathbb{C}$ be a continuous function supported in 
$\mathbb{B}_a^2$. We then have 
\[
f(r,\theta)=\sum_{m=-\infty}^{+\infty}\sum_{n=1}^\infty C_{n,m}^a[f]\Psi_{nm}^a(r,\theta),\hspace{0.5cm}{\rm for\ all}\ 0\le r\le a,\ 0\le \theta\le 2\pi,
\]
where
\begin{equation}
C_{n,m}^a[f]:=\sum_{\mathbf{k}\in\mathbb{Z}^2}c_a(\mathbf{k};n,m)\widehat{f}(a^{-1}\mathbf{k}),
\hspace{1cm}{\rm for\ all}\ m\in\mathbb{Z},\ n\in\mathbb{N}.
\end{equation}
}\end{corollary}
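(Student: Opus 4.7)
The plan is to apply Proposition \ref{gr.ZBVC.pw} directly, and then identify the symbol $\widehat{f}[a^{-1}\mathbf{k};a]$ that appears in the coefficient formula with the standard Fourier transform $\widehat{f}(a^{-1}\mathbf{k})$, using that $f$ vanishes outside $\mathbb{B}_a^2$.

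First, I would verify the hypothesis of Proposition \ref{gr.ZBVC.pw}: since $f:\mathbb{R}^2\to\mathbb{C}$ is continuous on the compact disk $\mathbb{B}_a^2$ (of finite Lebesgue measure), it is bounded and hence square-integrable on $\mathbb{B}_a^2$. Applying Proposition \ref{gr.ZBVC.pw} therefore yields the pointwise-convergent expansion
\[
f(r,\theta)=\sum_{m=-\infty}^{+\infty}\sum_{n=1}^\infty C_{n,m}^a[f]\Psi_{nm}^a(r,\theta), \qquad 0\le r\le a,\ 0\le\theta\le 2\pi,
\]
with $C_{n,m}^a[f]=\sum_{\mathbf{k}\in\mathbb{Z}^2}c_a(\mathbf{k};n,m)\widehat{f}[a^{-1}\mathbf{k};a]$.

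Second, I would show the identity $\widehat{f}[a^{-1}\mathbf{k};a]=\widehat{f}(a^{-1}\mathbf{k})$ for every $\mathbf{k}\in\mathbb{Z}^2$. Because $f$ is supported in $\mathbb{B}_a^2$, the Fourier integral over $\mathbb{R}^2$ collapses to an integral over $\mathbb{B}_a^2$; switching to polar coordinates via the integral decomposition (\ref{int.dec}) and specializing $\bom=a^{-1}\mathbf{k}$, the resulting expression matches the definition of $\widehat{f}[a^{-1}\mathbf{k};a]$ line for line. Substituting this identity into the series for $C_{n,m}^a[f]$ produces the formula claimed by the corollary.

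No substantial obstacle is expected: this corollary is really a cosmetic restatement of Proposition \ref{gr.ZBVC.pw} under the additional support assumption, which is exactly what allows the truncated transform $\widehat{f}[\,\cdot\,;a]$ to be replaced by the genuine Fourier transform $\widehat{f}$. The only minor care point is to note that continuity of $f$ on $\mathbb{R}^2$ together with $\mathrm{supp}(f)\subseteq\mathbb{B}_a^2$ forces $f$ to vanish on $\partial\mathbb{B}_a^2$, so the boundary values are compatible with the pointwise conclusion of Proposition \ref{gr.ZBVC.pw}.
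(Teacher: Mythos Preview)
Your proposal is correct and follows essentially the same route as the paper: invoke Proposition \ref{gr.ZBVC.pw} to obtain the pointwise expansion with coefficients expressed via $\widehat{f}[a^{-1}\mathbf{k};a]$, then use the support hypothesis $\mathrm{supp}(f)\subseteq\mathbb{B}_a^2$ to identify $\widehat{f}[a^{-1}\mathbf{k};a]$ with the full Fourier transform $\widehat{f}(a^{-1}\mathbf{k})$. Your remark on boundary compatibility is extra but harmless.
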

\begin{proof}
Let $a>0$ and $f:\mathbb{R}^2\to\mathbb{C}$ be a continuous function supported in 
$\mathbb{B}_a^2$. Since $f$ is continuous, using Proposition \ref{gr.ZBVC.pw}, conclude that 
\[
f(r,\theta)=\sum_{m=-\infty}^{+\infty}\sum_{n=1}^\infty C_{n,m}^a[f]\Psi_{nm}^a(r,\theta),\hspace{0.5cm}{\rm for\ all}\ 0\le r\le a,\ 0\le \theta\le 2\pi,
\]
where
\begin{equation}
C_{n,m}^a[f]:=\sum_{\mathbf{k}\in\mathbb{Z}^2}c_a(\mathbf{k};n,m)\widehat{f}[a^{-1}\mathbf{k};a],
\hspace{1cm}{\rm for\ all}\ m\in\mathbb{Z},\ n\in\mathbb{N}.
\end{equation}
Let $\mathbf{k}:=(k_1,k_2)^T\in\mathbb{Z}^2$ be an integral vector. 
Because $f$ is supported in disk $\mathbb{B}_a^2$, we can write 
\begin{align*}
\widehat{f}[a^{-1}\mathbf{k};a]&=\int_0^a\int_0^{2\pi}f(r,\theta)e^{-\pi\ii a^{-1}r(k_1\cos\theta+k_2\sin\theta)}rdrd\theta
\\&=\int_{\mathbb{B}_a^2}f(\mathbf{x})e^{-\pi\ii a^{-1}\mathbf{x}^T\mathbf{k}}d\mathbf{x}
\\&=\int_{\mathbb{R}^2}f(\mathbf{x})e^{-\pi\ii a^{-1}\mathbf{x}^T\mathbf{k}}d\mathbf{x}=\widehat{f}(a^{-1}\mathbf{k}),
\end{align*}
which completes the proof. 
\end{proof}

Let $\mathcal{R}:=\left\{\rho(k_1,k_2):k_1,k_2\in\mathbb{Z}\right\}$.
For each $r\in\mathcal{R}$, let
\[
\Theta_r:=\left\{\Phi(i,j):r=\rho(i,j),~~i,j\in\mathbb{Z}\right\}.
\]

\begin{proposition}
{\it With above assumptions we have
\begin{enumerate}
\item $\mathbb{N}\cup\{0\}\subseteq\mathcal{R}\subseteq\sqrt{\mathbb{N}}:=\{\sqrt{n}:n\in\mathbb{N}\cup\{0\}\}$.
\item $\mathcal{R}$ is a discrete subset of $[0,\infty)$.
\item For each $r\in\mathcal{R}$, the set $\Theta_r$ is a finite subset of $[0,2\pi)$.
\item $\mathbb{Z}^2=\bigcup_{r\in\mathcal{R}}\{(r\cos\theta,r\sin\theta)^T:\theta\in\Theta_r\}$.
\end{enumerate}
}\end{proposition}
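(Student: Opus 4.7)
The plan is to unpack each of the four claims directly from the definitions $\rho(k_1,k_2)=\sqrt{k_1^2+k_2^2}$ and $\Phi(k_1,k_2)\in[0,2\pi)$, together with the elementary observation that $k_1^2+k_2^2\in\mathbb{N}\cup\{0\}$ whenever $(k_1,k_2)\in\mathbb{Z}^2$. For part (1), I would establish the left inclusion by taking $(k_1,k_2)=(n,0)$ for each $n\in\mathbb{N}\cup\{0\}$, which gives $\rho(n,0)=n\in\mathcal{R}$; the right inclusion follows from writing $r=\sqrt{k_1^2+k_2^2}$ and noting $k_1^2+k_2^2\in\mathbb{N}\cup\{0\}$, so $r\in\sqrt{\mathbb{N}}$.

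For part (2), since (1) gives $\mathcal{R}\subseteq\sqrt{\mathbb{N}}$, it is enough to check that $\sqrt{\mathbb{N}}$ is a discrete subset of $[0,\infty)$, and then discreteness of $\mathcal{R}$ is inherited. Discreteness of $\sqrt{\mathbb{N}}$ comes from the positive gap
\[
\sqrt{n+1}-\sqrt{n}=\frac{1}{\sqrt{n+1}+\sqrt{n}}>0,
\]
which isolates each $\sqrt{n}$. For part (3), fix $r\in\mathcal{R}$; then $\Theta_r$ is the image of
\[
S_r:=\{(i,j)\in\mathbb{Z}^2: i^2+j^2=r^2\}
\]
under the map $\Phi$, and the constraint $|i|,|j|\le r$ bounds $\#S_r\le (2\lfloor r\rfloor+1)^2$, so $\Theta_r$ is finite (and of course $\Theta_r\subseteq[0,2\pi)$ by definition of $\Phi$).

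For part (4), I would argue a two-sided inclusion. Given $(k_1,k_2)\in\mathbb{Z}^2$, set $r:=\rho(k_1,k_2)\in\mathcal{R}$ and $\theta:=\Phi(k_1,k_2)\in\Theta_r$; then by the very definition of $\rho$ and $\Phi$ we have $(k_1,k_2)^T=(r\cos\theta,r\sin\theta)^T$, which places $(k_1,k_2)$ in the union. Conversely, if $r\in\mathcal{R}$ and $\theta\in\Theta_r$, the definition of $\Theta_r$ supplies $(i,j)\in\mathbb{Z}^2$ with $r=\rho(i,j)$ and $\theta=\Phi(i,j)$, whence $(r\cos\theta,r\sin\theta)^T=(i,j)^T\in\mathbb{Z}^2$.

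The argument is essentially bookkeeping and I do not expect any genuine obstacle; the only point requiring a small amount of care is the origin, where $\rho=0$ forces the singleton $S_0=\{(0,0)\}$ and $\Phi(0,0)$ must be fixed by convention (e.g.\ $\Phi(0,0)=0$) so that $\Theta_0=\{0\}$ is genuinely a subset of $[0,2\pi)$ and the union in (4) indeed contains $(0,0)^T$. With that convention, which is consistent with the definition of $\Phi$ adopted earlier, all four items reduce to one-line verifications.
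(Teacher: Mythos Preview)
Your proposal is correct and follows essentially the same approach as the paper. The paper dismisses (1)--(3) as ``straightforward'' and proves only (4) by the same two-sided inclusion you describe; your additional care with the convention $\Phi(0,0)=0$ at the origin is a point the paper glosses over (its computation $r\cos\theta=\sqrt{i^2+j^2}\cdot i/\sqrt{i^2+j^2}$ tacitly assumes $(i,j)\neq(0,0)$).
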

\begin{proof}
(1)-(3) are straightforward.

(4) Let $\mathbf{x}\in\bigcup_{r\in\mathcal{R}}\{(r\cos\theta,r\sin\theta)^T:\theta\in\Theta_r\}$. Suppose $r\in\mathcal{R}$ and $\theta\in\Theta_r$ with $\mathbf{x}=(r\cos\theta,r\sin\theta)^T$. Hence, $\theta=\Phi(i,j)$ with $\rho(i,j)=r$, for some $i,j\in\mathbb{Z}$. We then have
\[
r\cos\theta=\rho(i,j)\cos\Phi(i,j)=\sqrt{i^2+j^2}\frac{i}{\sqrt{i^2+j^2}}=i\in\mathbb{Z},
\]
and
\[
r\sin\theta=\rho(i,j)\sin\Phi(i,j)=\sqrt{i^2+j^2}\frac{j}{\sqrt{i^2+j^2}}=j\in\mathbb{Z}.
\]
Thus, we deduce that $\mathbf{x}=(r\cos\theta,r\sin\theta)^T\in\mathbb{Z}^2$.
Therefore, we get $\bigcup_{r\in\mathcal{R}}\{(r\cos\theta,r\sin\theta)^T:\theta\in\Theta_r\}\subseteq\mathbb{Z}^2$. Conversely, let $\mathbf{x}=(k_1,k_2)\in\mathbb{Z}^2$ be given.
We then have $k_1,k_2\in\mathbb{Z}$ and hence we get
$k_1=\rho(k_1,k_2)\cos\Phi(k_1,k_2)$, and $k_2=\rho(k_1,k_2)\sin\Phi(k_1,k_2)$.
Then, we conclude that $\mathbf{x}=(r\cos\theta,r\sin\theta)^T$,
with $r:=\rho(k_1,k_2)$ and $\theta:=\Phi(k_1,k_2)$. This implies that $\mathbf{x}\in\bigcup_{r\in\mathcal{R}}\{(r\cos\theta,r\sin\theta)^T:\theta\in\Theta_r\}$ and hence
$\mathbb{Z}^2\subseteq\bigcup_{r\in\mathcal{R}}\{(r\cos\theta,r\sin\theta)^T:\theta\in\Theta_r\}$.
\end{proof}

We then present the following polarized version of Theorem \ref{TH.C.ZBVC}.

\begin{theorem}
Let $a>0$ and $\xi\in L^2(\mathbb{B}_a^2)$ be a function. Also, let $m\in\mathbb{Z}$ and $n\in\mathbb{N}$. We then have
\begin{equation}\label{C.ZBVC.Polar}
C_{n,m}^a(\xi)=\sum_{\tau\in\mathcal{R}}\sum_{\alpha\in\Phi_\tau}A_{mn}^a(\tau,\alpha)\widehat{\xi}\{\tau\mathbf{u}_\alpha\},
\end{equation}
where
\begin{equation}
A_{mn}^a(\tau,\alpha):=\sqrt{\pi}(-1)^{n}\ii^{m}\rho_{nm}\frac{J_{m}(\pi\tau)e^{-\ii m\alpha}}{2(\pi^2\tau^2-z_{nm}^2)}.
\end{equation}
\end{theorem}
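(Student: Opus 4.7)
The plan is to obtain this result as a direct reindexing of Theorem \ref{TH.C.ZBVC}, using the partition of $\mathbb{Z}^2$ established in the preceding proposition. Since Theorem \ref{TH.C.ZBVC} gives
\[
C_{n,m}^a(\xi)=\sum_{\mathbf{k}\in\mathbb{Z}^2}c_a(\mathbf{k};n,m)\widehat{\xi}\{\mathbf{k}\},
\]
my first step is simply to rewrite each $\mathbf{k}\in\mathbb{Z}^2$ in its polar form: writing $\tau:=\rho(\mathbf{k})=|\mathbf{k}|$ and $\alpha:=\Phi(\mathbf{k})\in[0,2\pi)$, we have $\mathbf{k}=\tau\mathbf{u}_\alpha$, and by part (4) of the preceding proposition, as $\mathbf{k}$ ranges over $\mathbb{Z}^2$, the pair $(\tau,\alpha)$ ranges over $\{(\tau,\alpha):\tau\in\mathcal{R},\ \alpha\in\Theta_\tau\}$, with the correspondence being a bijection by construction.

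Next I would substitute directly into the formula for the coefficient: since $|\mathbf{k}|=\tau$ and $\Phi(\mathbf{k})=\alpha$, we obtain
\[
c_a(\mathbf{k};n,m)=\sqrt{\pi}(-1)^{n}\ii^{m}\rho_{nm}\frac{J_{m}(\pi\tau)e^{-\ii m\alpha}}{2(\pi^2\tau^2-z_{nm}^2)}=A_{mn}^a(\tau,\alpha),
\]
and by definition of $\widehat{\xi}\{\cdot\}$ we have $\widehat{\xi}\{\mathbf{k}\}=\widehat{\xi}\{\tau\mathbf{u}_\alpha\}$. Plugging these in and rearranging the sum according to the decomposition $\mathbb{Z}^2=\bigsqcup_{\tau\in\mathcal{R}}\{\tau\mathbf{u}_\alpha:\alpha\in\Theta_\tau\}$ yields the claimed identity (\ref{C.ZBVC.Polar}).

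The only delicate point is the legitimacy of regrouping the sum, but this is not really an obstacle: for every fixed $\tau\in\mathcal{R}$ the inner set $\Theta_\tau$ is finite by part (3) of the preceding proposition, so the rearrangement only involves grouping the countable sum from Theorem \ref{TH.C.ZBVC} into finite blocks indexed by $\tau$, which is always permissible. Thus the proof is essentially a clean reindexing, and no additional convergence analysis beyond what is already contained in Theorem \ref{TH.C.ZBVC} is required.
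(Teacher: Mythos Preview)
Your proposal is correct and follows essentially the same approach as the paper: both start from Theorem \ref{TH.C.ZBVC}, identify $c_a(\tau\mathbf{u}_\alpha;n,m)$ with $A_{mn}^a(\tau,\alpha)$ via $|\mathbf{k}|=\tau$ and $\Phi(\mathbf{k})=\alpha$, and then reindex the sum over $\mathbb{Z}^2$ using the polar decomposition from the preceding proposition. Your added remark on the legitimacy of the regrouping (finite inner blocks) is a nice touch that the paper leaves implicit.
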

\begin{proof}
Let $m\in\mathbb{Z}$ and $n\in\mathbb{N}$. First, suppose that $\tau\in\mathcal{R}$ and $\alpha\in\Phi_\tau$. Let $\mathbf{k}:=\tau\mathbf{u}_{\alpha}=(\tau\cos\alpha,\tau\sin\alpha)^T\in\mathbb{Z}^2$. Thus, $|\mathbf{k}|=\tau$ and $\Phi(\mathbf{k})=\alpha$. We then have
\begin{align*}
c_a(\tau\mathbf{u}_\alpha;n,m)&=c_a(\mathbf{k};n,m)
\\&=\sqrt{\pi}(-1)^{n}\ii^{m}\rho_{nm}\frac{J_{m}(\pi|\mathbf{k}|)e^{-\ii m\Phi(\mathbf{k})}}{2(\pi^2|\mathbf{k}|^2-z_{nm}^2)}
=\sqrt{\pi}(-1)^{n}\ii^{m}\rho_{nm}\frac{J_{m}(\pi\tau)e^{-\ii m\alpha}}{2(\pi^2\tau^2-z_{nm}^2)}.
\end{align*}
Therefore, using (\ref{C.ZBVC}), we get
\begin{align*}
C_{n,m}^a(\xi)&=\sum_{\mathbf{k}\in\mathbb{Z}^2}c_a(\mathbf{k};n,m)\widehat{\xi}\{\mathbf{k}\}
\\&=\sum_{\tau\in\mathcal{R}}\sum_{\alpha\in\Phi_\tau}c_a(\tau\mathbf{u}_\alpha;n,m)\widehat{\xi}\{\tau\mathbf{u}_\alpha\}
\\&=\sqrt{\pi}(-1)^{n}\ii^{m}\rho_{nm}\sum_{\tau\in\mathcal{R}}\sum_{\alpha\in\Phi_\tau}\frac{J_{m}(\pi\tau)e^{-\ii m\alpha}}{2(\pi^2\tau^2-z_{nm}^2)}\widehat{\xi}\{\tau\mathbf{u}_\alpha\}
=\sum_{\tau\in\mathcal{R}}\sum_{\alpha\in\Phi_\tau}A_{mn}^a(\tau,\alpha)\widehat{\xi}\{\tau\mathbf{u}_\alpha\}.
\end{align*}
\end{proof}

\begin{corollary}
{\it Let $a>0$ and $\xi:\mathbb{B}_a^2\to\mathbb{C}$ be a continuous function. We then have
\[
\xi(r,\theta)=\sum_{m=-\infty}^{+\infty}\sum_{n=1}^\infty C_{n,m}^a(\xi)\Psi_{nm}^a(r,\theta),\hspace{0.5cm}{\rm for\ all}\ 0\le r\le a,\ 0\le \theta\le 2\pi
\]
where
\begin{equation}
C_{n,m}^a(\xi)=\sum_{\tau\in\mathcal{R}}\sum_{\alpha\in\Phi_\tau}A_{mn}^a(\tau,\alpha)\widehat{\xi}\{\tau\mathbf{u}_\alpha\}.
\end{equation}
}\end{corollary}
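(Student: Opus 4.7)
The plan is to combine the polarized coefficient formula established in the preceding theorem with the continuous-function version of the Fourier-Bessel expansion already established as a corollary of Theorem~\ref{TH.C.ZBVC}. The main observation is that no new analytical content is needed: we only need to regroup the lattice sum $\sum_{\mathbf{k}\in\mathbb{Z}^2}$ appearing in~\eqref{C.ZBVC} according to the radial/angular decomposition $\mathbb{Z}^2=\bigcup_{\tau\in\mathcal{R}}\{\tau\mathbf{u}_\alpha:\alpha\in\Phi_\tau\}$.

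First, since $\mathbb{B}_a^2$ is compact and $\xi$ is continuous, $\xi$ is bounded on $\mathbb{B}_a^2$ and the Lebesgue measure of $\mathbb{B}_a^2$ is finite, so $\xi\in L^2(\mathbb{B}_a^2)$. The corollary following Theorem~\ref{TH.C.ZBVC} (for continuous functions on $\mathbb{B}_a^2$) then yields the pointwise expansion
\[
\xi(r,\theta)=\sum_{m=-\infty}^{+\infty}\sum_{n=1}^\infty C_{n,m}^a(\xi)\Psi_{nm}^a(r,\theta),\hspace{0.5cm}{\rm for\ all}\ 0\le r\le a,\ 0\le \theta\le 2\pi.
\]

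Second, I would apply the preceding polarized theorem, which asserts that for any $\xi\in L^2(\mathbb{B}_a^2)$ the coefficient $C_{n,m}^a(\xi)$ admits the representation~\eqref{C.ZBVC.Polar}, namely
\[
C_{n,m}^a(\xi)=\sum_{\tau\in\mathcal{R}}\sum_{\alpha\in\Phi_\tau}A_{mn}^a(\tau,\alpha)\,\widehat{\xi}\{\tau\mathbf{u}_\alpha\}.
\]
Substituting this expression for $C_{n,m}^a(\xi)$ into the pointwise expansion above completes the proof.

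There is no real obstacle here: the new theorem has already done the work of converting the Cartesian lattice sum~\eqref{C.ZBVC} into its polar counterpart, and the continuous-function version of the expansion is available as a prior corollary. The statement is thus a direct combination of two results already in hand.
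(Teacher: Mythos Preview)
Your proposal is correct and matches the paper's approach: the paper states this corollary without proof, as it follows immediately by combining the preceding polarized theorem (giving the form~\eqref{C.ZBVC.Polar} of the coefficients) with the earlier continuous-function corollary of Theorem~\ref{TH.C.ZBVC} (giving the pointwise expansion). Your argument makes this explicit and there is nothing to add.
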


Next result gives a polarized version for explicit closed form of coefficients for zero-padded functions.

\begin{proposition}
{\it Let $a>0$ and $f:\mathbb{R}^2\to\mathbb{C}$ be a square-integrable function 
on the disk $\mathbb{B}_a^2$. We then have
\[
f(r,\theta)=\sum_{m=-\infty}^{+\infty}\sum_{n=1}^\infty C_{n,m}^a[f]\Psi_{nm}^a(r,\theta),
\hspace{0.5cm}{\rm for\ a.e.}\ 0\le r\le a,\ 0\le \theta\le 2\pi,\]
with
\begin{equation}
C_{n,m}^a[f]=\sum_{\tau\in\mathcal{R}}\sum_{\alpha\in\Phi_\tau}A_{mn}^a(\tau,\alpha)\widehat{f}[a^{-1}\tau\mathbf{u}_\alpha;a].
\end{equation}
}\end{proposition}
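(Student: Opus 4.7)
The plan is to reduce this proposition directly to Theorem \ref{g.zp.ZBVC} by re-indexing the sum appearing in the formula for $C_{n,m}^a[f]$, using the partition of $\mathbb{Z}^2$ established earlier, and then substituting the simplified coefficient expression already computed in the preceding polarized version for $C_{n,m}^a(\xi)$.

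First I would invoke Theorem \ref{g.zp.ZBVC}: since $f$ is square-integrable on $\mathbb{B}_a^2$, the Fourier-Bessel expansion
\[
f(r,\theta)=\sum_{m=-\infty}^{+\infty}\sum_{n=1}^\infty C_{n,m}^a[f]\Psi_{nm}^a(r,\theta)
\]
holds for a.e.\ $(r,\theta)\in[0,a]\times[0,2\pi]$, with $C_{n,m}^a[f]=\sum_{\mathbf{k}\in\mathbb{Z}^2}c_a(\mathbf{k};n,m)\widehat{f}[a^{-1}\mathbf{k};a]$. Thus the expansion part of the claim requires nothing new, and only the polar re-expression of $C_{n,m}^a[f]$ remains to be established.

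Next I would apply the partition
\[
\mathbb{Z}^2=\bigcup_{\tau\in\mathcal{R}}\{\tau\mathbf{u}_\alpha:\alpha\in\Phi_\tau\},
\]
proved in the preceding proposition, to rewrite the sum over $\mathbb{Z}^2$ as an iterated sum over $\tau\in\mathcal{R}$ and $\alpha\in\Phi_\tau$. For each such $\mathbf{k}=\tau\mathbf{u}_\alpha$ we have $|\mathbf{k}|=\tau$ and $\Phi(\mathbf{k})=\alpha$, so the identification $c_a(\tau\mathbf{u}_\alpha;n,m)=A_{mn}^a(\tau,\alpha)$ carried out in the proof of the polarized version of Theorem \ref{TH.C.ZBVC} applies verbatim. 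Substituting gives
\[
C_{n,m}^a[f]=\sum_{\tau\in\mathcal{R}}\sum_{\alpha\in\Phi_\tau}A_{mn}^a(\tau,\alpha)\widehat{f}[a^{-1}\tau\mathbf{u}_\alpha;a],
\]
which is the desired formula.

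The only nontrivial issue is the legitimacy of the reorganization of the sum indexed by $\mathbb{Z}^2$ into the iterated sum over $\mathcal{R}$ and $\Phi_\tau$. Since each $\Phi_\tau$ is a finite subset of $[0,2\pi)$ and the sets $\{\tau\mathbf{u}_\alpha:\alpha\in\Phi_\tau\}$ partition $\mathbb{Z}^2$, this is simply a rearrangement of the same countable family of terms, so no convergence subtlety arises beyond what was already used in Theorem \ref{g.zp.ZBVC}. Consequently the proof reduces to essentially two lines: apply Theorem \ref{g.zp.ZBVC}, then re-index the Fourier sum using the polar decomposition of $\mathbb{Z}^2$.
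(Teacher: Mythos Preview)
Your proposal is correct and follows exactly the approach implicit in the paper: the proposition is stated there without proof because it is an immediate consequence of Theorem \ref{g.zp.ZBVC} combined with the polar re-indexing of $\mathbb{Z}^2$ and the identity $c_a(\tau\mathbf{u}_\alpha;n,m)=A_{mn}^a(\tau,\alpha)$ established in the preceding polarized theorem. Your two-step argument (apply Theorem \ref{g.zp.ZBVC}, then re-index) is precisely this.
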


\begin{corollary}
{\it Let $a>0$ and $f:\mathbb{R}^2\to\mathbb{C}$ be a continuous function. 
We then have
\[
f(r,\theta)=\sum_{m=-\infty}^{+\infty}\sum_{n=1}^\infty C_{n,m}^a[f]\Psi_{nm}^a(r,\theta),
\hspace{0.5cm}{\rm for\ all}\ 0\le r\le a,\ 0\le \theta\le 2\pi,
\]
}\end{corollary}

\subsection{Zero-padded convolutions on disks using zero-valued boundary condition}
We then continue by investigating analytical aspects of Fourier-Bessel approximations for zero-padded convolutions and hence convolution of functions supported in disks.

The following theorem introduces a constructive method for computing the Fourier-Bessel coefficients of zero-padded convolutions of functions on disks.

\begin{theorem}
Let $a>0$ and $b:=a/2$. Suppose $f_j:\mathbb{R}^2\to\mathbb{C}$ with $j\in\{1,2\}$ are  
square integrable functions on $\mathbb{B}_b^2$. Let $m\in\mathbb{Z}$ and $n\in\mathbb{N}$. We then have
\begin{equation}\label{conv.ZBC}
C_{n,m}^a[f_1\circledast f_2]=\sum_{\mathbf{k}\in\mathbb{Z}^2}c_a(\mathbf{k};n,m)\widehat{f_1}[a^{-1}\mathbf{k};b]\widehat{f_2}[a^{-1}\mathbf{k};b],
\end{equation}
where
\begin{equation}\label{zp.hat}
\widehat{f_j}[\bom;b]:=\int_0^b\int_0^{2\pi}f_j(r,\theta)e^{-\pi\ii r(\omega_1\cos\theta+\omega_2\sin\theta)}rdrd\theta,\hspace{1cm}{\rm for\ all}\ \bom:=(\omega_1,\omega_2)^T\in\mathbb{R}^2.
\end{equation}
\end{theorem}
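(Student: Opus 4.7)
The plan is to realize the identity as a straightforward composition of two ingredients already proved in the paper: the Fourier-Bessel coefficient formula of Theorem \ref{g.zp.ZBVC} applied to $g:=f_1\circledast f_2$, and the classical Euclidean convolution theorem for the ordinary Fourier transform. Concretely, I first verify that $g$ falls under the scope of Theorem \ref{g.zp.ZBVC}: since each $f_j$ is square-integrable on $\mathbb{B}_b^2$, Proposition \ref{222.R2} guarantees $R(g)\in L^2(\mathbb{B}_a^2)$, so that
\[
C_{n,m}^a[g]=\sum_{\mathbf{k}\in\mathbb{Z}^2}c_a(\mathbf{k};n,m)\,\widehat{g}[a^{-1}\mathbf{k};a].
\]
The remaining task is to factor $\widehat{g}[a^{-1}\mathbf{k};a]$ as a product of $\widehat{f_1}[a^{-1}\mathbf{k};b]$ and $\widehat{f_2}[a^{-1}\mathbf{k};b]$.

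Second, I would rewrite $\widehat{g}[a^{-1}\mathbf{k};a]$ as a Cartesian integral. Because $g=E(\xi_1)\ast E(\xi_2)$ is supported in $\mathbb{B}_a^2$ (here $\xi_j:=R(f_j)$), converting the polar expression for $\widehat{g}[\,\cdot\,;a]$ into rectangular coordinates and extending trivially to all of $\mathbb{R}^2$ gives
\[
\widehat{g}[a^{-1}\mathbf{k};a]=\int_{\mathbb{R}^2}\bigl(E(\xi_1)\ast E(\xi_2)\bigr)(\mathbf{x})\,e^{-\pi\ii a^{-1}\mathbf{x}^T\mathbf{k}}\,d\mathbf{x},
\]
which is exactly the standard Euclidean Fourier transform of $E(\xi_1)\ast E(\xi_2)$ at the frequency $\bom=\pi a^{-1}\mathbf{k}$. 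The inclusion $L^2(\mathbb{B}_b^2)\subseteq L^1(\mathbb{B}_b^2)$ (used already in Theorem \ref{ZPC.12}) places each $E(\xi_j)$ in $L^1(\mathbb{R}^2)$, so the classical convolution theorem legitimately factors this quantity as $\widehat{E(\xi_1)}(\pi a^{-1}\mathbf{k})\cdot\widehat{E(\xi_2)}(\pi a^{-1}\mathbf{k})$.

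Third, I would identify each factor with the symbol used in the statement. Because $E(\xi_j)$ vanishes outside $\mathbb{B}_b^2$, the Euclidean Fourier integral collapses to a disk integral which, rewritten in polar coordinates, is precisely $\widehat{f_j}[a^{-1}\mathbf{k};b]$ as defined in \eqref{zp.hat}. Inserting this factorization into the sum from the first step yields \eqref{conv.ZBC}. I do not anticipate a genuine obstacle: the argument is essentially bookkeeping that aligns the disk-Fourier coefficient formula with the classical convolution theorem. The only mild point of care is the interchange of summation and integration implicit in applying Theorem \ref{g.zp.ZBVC} to $g$, which is already justified there via the $L^2$ convergence of the underlying rectangular Fourier series on $\Omega_a$.
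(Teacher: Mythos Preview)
Your proposal is correct and follows essentially the same route as the paper: invoke Proposition \ref{222.R2} to ensure $g=f_1\circledast f_2$ is square integrable on $\mathbb{B}_a^2$, apply the coefficient formula \eqref{CnmZf.ZBVC} from Theorem \ref{g.zp.ZBVC}, then use the support of $g$ in $\mathbb{B}_a^2$ together with the classical convolution theorem (via $E(\xi_j)\in L^1(\mathbb{R}^2)$) to factor $\widehat{g}[a^{-1}\mathbf{k};a]$ as $\widehat{f_1}[a^{-1}\mathbf{k};b]\,\widehat{f_2}[a^{-1}\mathbf{k};b]$. The only cosmetic difference is that you track the $\pi$ in the Fourier frequency explicitly, while the paper suppresses it in its notation for $\widehat{f}(a^{-1}\mathbf{k})$.
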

\begin{proof}
Let $f_j:\mathbb{R}^2\to\mathbb{C}$ with $j\in\{1,2\}$ be square integrable functions on $\mathbb{B}_b^2$. Let $\xi_j:=R(f_j)$ be the restriction of $f_j$ to the disk 
$\mathbb{B}_b^2$ and $E(\xi_j)$ be the extension of $\xi_j$ to $\mathbb{R}^2$ by zero-padding.  Thus, we get $\xi_j\in L^2(\mathbb{B}_b^2)$ and hence $E(\xi_j)\in L^2(\mathbb{R}^2)$. Since $L^2(\mathbb{B}_b^2)\subseteq L^1(\mathbb{B}_b^2)$, we get $\xi_j\in L^1(\mathbb{B}_b^2)$ as well. Therefore, we have $E(\xi_j)\in L^1\cap L^2(\mathbb{R}^2)$. By definition of zero-padded convolutions, we have  
\[
f_1\circledast f_2=E(\xi_1)\ast E(\xi_2).
\]
Using Proposition \ref{222.R2}, we deduce that $f:=f_1\circledast f_2$ is square integrable on $\mathbb{B}_a^2$. Let $\mathbf{k}\in\mathbb{Z}^2$ be given. Since 
$f=f_1\circledast f_2$ is supported in the disk 
$\mathbb{B}_a^2$, we have  
\[
\widehat{f}[a^{-1}\mathbf{k};a]=\widehat{f}(a^{-1}\mathbf{k}).
\]
By the convolution property of Fourier transform, we get 
\begin{equation}\label{conv.zp.0}
\widehat{f}[a^{-1}\mathbf{k};a]=\widehat{E(\xi_1)}(a^{-1}\mathbf{k})\widehat{E(\xi_2)}(a^{-1}\mathbf{k}).
\end{equation}
Since each $E(\xi_j)$ is supported in the disk $\mathbb{B}_b^2$, we get 
\begin{equation}\label{conv.zp.00}
\widehat{E(\xi_j)}(a^{-1}\mathbf{k})=\widehat{f_j}[a^{-1}\mathbf{k};b].
\end{equation}
Indeed, we can write 
\begin{align*}
\widehat{E(\xi_j)}(a^{-1}\mathbf{k})
&=\int_{\mathbb{R}^2}E(\xi_j)(\mathbf{x})e^{-\ii a^{-1}\mathbf{x}^T\mathbf{k}}d\mathbf{x}
\\&=\int_{\mathbb{B}_b^2}\xi_j(\mathbf{x})e^{-\ii a^{-1}\mathbf{x}^T\mathbf{k}}d\mathbf{x}
\\&=\int_0^b\int_0^{2\pi}f_j(r,\theta)e^{-\pi\ii a^{-1}r(k_1\cos\theta+k_2\sin\theta)}rdrd\theta=\widehat{f_j}[a^{-1}\mathbf{k};b].
\end{align*}
Applying Equation (\ref{conv.zp.00}) in Equation (\ref{conv.zp.0}), we get 
\begin{equation}\label{conv.zp.000}
\widehat{f}[a^{-1}\mathbf{k};a]=\widehat{f_1}[a^{-1}\mathbf{k};b]\widehat{f_2}[a^{-1}\mathbf{k};b].
\end{equation}
Let $m\in\mathbb{Z}$ and $n\in\mathbb{N}$.
Then, using Equation (\ref{conv.zp.000}) in Equation (\ref{CnmZf.ZBVC}), we get
\begin{align*}
C_{n,m}^a[f_1\circledast f_2]&=C_{n,m}^a[f]
\\&=\sum_{\mathbf{k}\in\mathbb{Z}^2}c_a(\mathbf{k};n,m)\widehat{f}[a^{-1}\mathbf{k};a]
\\&=\sum_{\mathbf{k}\in\mathbb{Z}^2}c_a(\mathbf{k};n,m)\widehat{f_1}[a^{-1}\mathbf{k};b]\widehat{f_2}[a^{-1}\mathbf{k};b],
\end{align*}
which completes the proof.
\end{proof}

\begin{corollary}
{\it Let $a>0$ and $b:=a/2$. Suppose $f_j:\mathbb{R}^2\to\mathbb{C}$ with $j\in\{1,2\}$ are functions integrable on $\mathbb{B}_b^2$. We then have
\begin{equation}\label{2D.EX.Fr.ast.zerop}
(f_1\circledast f_2)(r,\theta)=\sum_{m=-\infty}^\infty\sum_{n=1}^\infty C_{n,m}^{a}[f_1\circledast f_2]\Psi_{nm}^a(r,\theta),\hspace{0.5cm}{\rm for\ a.e.}\ 0\le r\le a,\ 0\le \theta\le 2\pi,
\end{equation}
where
\begin{equation}
C_{n,m}^a[f_1\circledast f_2]=\sum_{\mathbf{k}\in\mathbb{Z}^2}c_a(\mathbf{k};n,m)\widehat{f_1}[a^{-1}\mathbf{k};b]\widehat{f_2}[a^{-1}\mathbf{k};b].
\end{equation}
}\end{corollary}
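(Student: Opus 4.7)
The plan is to view this corollary as an immediate consequence of the theorem that immediately precedes it, combined with the general Fourier–Bessel expansion theorem for square-integrable functions on disks (Theorem \ref{g.zp.ZBVC}). The idea is that the preceding theorem already identifies the coefficients $C_{n,m}^{a}[f_1\circledast f_2]$ with the closed-form sum on the right-hand side, so what remains is to justify the pointwise/a.e. series expansion of $f_1\circledast f_2$ itself in the Fourier–Bessel basis $\{\Psi_{nm}^{a}\}$.

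First I would reduce the integrability hypothesis to the setting required by Theorem \ref{g.zp.ZBVC}. Writing $\xi_j:=R(f_j)$ for the restriction of $f_j$ to $\mathbb{B}_b^2$ and $E(\xi_j)$ for its zero-padded extension, the assumption that $f_j$ is integrable (understood as square-integrable) on $\mathbb{B}_b^2$ gives $\xi_j\in L^2(\mathbb{B}_b^2)$ and hence $E(\xi_j)\in L^2(\mathbb{R}^2)$. By Proposition \ref{222.R2}, the convolution $f_1\circledast f_2=E(\xi_1)\ast E(\xi_2)$ is then square-integrable on $\mathbb{B}_a^2$ (in fact supported there, by the Minkowski-sum inclusion $\mathbb{B}_b^2+\mathbb{B}_b^2\subseteq\mathbb{B}_a^2$ recorded in Section 2). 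Thus $f_1\circledast f_2$ satisfies the hypotheses of Theorem \ref{g.zp.ZBVC} with parameter $a$.

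Next I would apply Theorem \ref{g.zp.ZBVC} directly to the function $f:=f_1\circledast f_2$. This yields the convergent expansion
\[
(f_1\circledast f_2)(r,\theta)=\sum_{m=-\infty}^{+\infty}\sum_{n=1}^{\infty}C_{n,m}^{a}[f_1\circledast f_2]\,\Psi_{nm}^{a}(r,\theta),
\]
valid for almost every $0\le r\le a$ and $0\le\theta\le 2\pi$, which is precisely the first assertion of the corollary. It remains to insert the closed-form expression for the coefficients, which is exactly the content of the preceding theorem (equation \eqref{conv.ZBC}):
\[
C_{n,m}^{a}[f_1\circledast f_2]=\sum_{\mathbf{k}\in\mathbb{Z}^2}c_a(\mathbf{k};n,m)\,\widehat{f_1}[a^{-1}\mathbf{k};b]\,\widehat{f_2}[a^{-1}\mathbf{k};b].
\]

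There is essentially no hard step here: the whole proof is a bookkeeping exercise combining one expansion theorem with one coefficient-computation theorem, both of which have already been established. The only mild subtlety is the mismatch in wording between ``integrable on $\mathbb{B}_b^2$'' in the corollary and the square-integrability that the previous theorem and Proposition \ref{222.R2} actually require; I would read the hypothesis as meaning square-integrability (consistent with the ambient $L^2$ theory of this section) and note this in a brief sentence to make the reduction transparent.
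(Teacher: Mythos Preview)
Your proposal is correct and matches the paper's intended argument: the paper states this corollary without proof, treating it as an immediate consequence of the preceding theorem (which supplies the coefficient formula \eqref{conv.ZBC}) together with Theorem \ref{g.zp.ZBVC} (which supplies the a.e.\ Fourier--Bessel expansion once $f_1\circledast f_2$ is known to be square-integrable on $\mathbb{B}_a^2$ via Proposition \ref{222.R2}). Your remark about reading ``integrable'' as ``square-integrable'' is well taken, since both the preceding theorem and Proposition \ref{222.R2} assume $L^2$ data; this appears to be a minor inconsistency in the statement rather than a gap in your reasoning.
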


\begin{proposition}
{\it Let $a>0$ and $b:=a/2$. Suppose $f_j:\mathbb{R}^2\to\mathbb{C}$ with $j\in\{1,2\}$ 
are continuous functions supported in $\mathbb{B}^2_{b}$. We then have
\begin{equation}\label{2D.EX.Fr.ast.ZBVC.cts}
(f_1\ast f_2)(r,\theta)=\sum_{m=-\infty}^\infty\sum_{n=1}^{\infty} C_{n,m}^{a}[f_1\ast f_2]\Psi_{nm}^a(r,\theta),\hspace{0.5cm}{\rm for\ all}\ 0\le r\le a,\ 0\le \theta\le 2\pi
\end{equation}
where
\begin{equation}\label{ZBC.cts.conv}
C_{n,m}^a[f_1\ast f_2]=\sum_{\mathbf{k}\in\mathbb{Z}^2}c_a(\mathbf{k};n,m)\widehat{f_1}(a^{-1}\mathbf{k})\widehat{f_2}(a^{-1}\mathbf{k}).
\end{equation}
}\end{proposition}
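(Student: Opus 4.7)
The plan is to reduce this proposition to the preceding corollary by invoking the special features of continuous, compactly supported inputs. Since each $f_j$ is continuous and supported in $\mathbb{B}_b^2$, the corollary immediately following Theorem~\ref{ZPC.12} identifies $f_1\circledast f_2$ with the ordinary convolution $f_1\ast f_2$, and further tells us that this common function is continuous and supported in $\mathbb{B}_a^2$. In particular $f_1\ast f_2$ is integrable (even square integrable) on $\mathbb{B}_b^2$, so the corollary stated just before this proposition applies, delivering a Fourier--Bessel expansion that converges almost everywhere on $\mathbb{B}_a^2$ with coefficients $C_{n,m}^a[f_1\circledast f_2]$ of the form (\ref{conv.ZBC}).

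Next I would simplify the coefficient formula by identifying the windowed Fourier integrals with genuine Fourier transforms. For every $\mathbf{k}\in\mathbb{Z}^2$, since $f_j$ vanishes outside $\mathbb{B}_b^2$, the integral over $[0,b]\times[0,2\pi]$ defining $\widehat{f_j}[a^{-1}\mathbf{k};b]$ in (\ref{zp.hat}) may be extended to an integral over all of $\mathbb{R}^2$ without changing its value, and when rewritten in Cartesian coordinates this is precisely $\widehat{f_j}(a^{-1}\mathbf{k})$. Substituting this identification into the coefficient formula (\ref{conv.ZBC}) yields the desired identity (\ref{ZBC.cts.conv}).

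The remaining task is to upgrade convergence from almost everywhere to pointwise on all of $[0,a]\times[0,2\pi]$. This is the step I expect to require the most care, and it is handled exactly as in Proposition~\ref{gr.ZBVC.pw}: because $f_1\ast f_2$ is continuous on $\mathbb{B}_a^2$, the same continuity argument used there promotes the a.e.\ validity of the Fourier--Bessel expansion into pointwise validity on the entire closed disk. Combining the three steps gives the stated identity, with the coefficients expressed purely in terms of samples of the genuine Fourier transforms $\widehat{f_j}$ at the lattice $a^{-1}\mathbb{Z}^2$.
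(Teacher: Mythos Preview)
Your proposal is correct and follows the approach implicit in the paper's structure (the paper states this proposition without proof, as an immediate consequence of the preceding corollary combined with the identification $f_1\circledast f_2=f_1\ast f_2$ for compactly supported continuous inputs, the reduction $\widehat{f_j}[a^{-1}\mathbf{k};b]=\widehat{f_j}(a^{-1}\mathbf{k})$, and the continuity upgrade from Proposition~\ref{gr.ZBVC.pw}). One small wording slip: the hypothesis of the preceding corollary is that the \emph{inputs} $f_j$ are integrable on $\mathbb{B}_b^2$, not that $f_1\ast f_2$ is; of course continuous functions supported in $\mathbb{B}_b^2$ satisfy this, so the logic is unaffected.
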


We then present the following polarized version of closed forms for Fourier-Bessel coefficients of zero-padded convolutions on disks.

\begin{theorem}
Let $a>0$ and $b:=a/2$. Suppose $f_j:\mathbb{R}^2\to\mathbb{C}$ with $j\in\{1,2\}$ are continuous functions. Also, let $m\in\mathbb{Z}$ and $n\in\mathbb{N}$. We then have
\begin{equation}
C_{n,m}^a[f_1\circledast f_2]=\sum_{\tau\in\mathcal{R}}\sum_{\alpha\in\Phi_\tau}A_{mn}^a(\tau,\alpha)\widehat{f_1}[a^{-1}\tau\mathbf{u}_\alpha;b]\widehat{f_2}[a^{-1}\tau\mathbf{u}_\alpha;b].
\end{equation}
In particular, if $f_j:\mathbb{R}^2\to\mathbb{C}$ with $j\in\{1,2\}$ are continuous functions supported in $\mathbb{B}^2_{b}$, we have
\begin{equation}
C_{n,m}^a[f_1\circledast f_2]=\sum_{\tau\in\mathcal{R}}\sum_{\alpha\in\Phi_\tau}A_{mn}^a(\tau,\alpha)\widehat{f_1}(a^{-1}\tau\mathbf{u}_\alpha)\widehat{f_2}(a^{-1}\tau\mathbf{u}_\alpha).
\end{equation}
\end{theorem}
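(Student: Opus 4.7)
The plan is to combine the previously established closed form \eqref{conv.ZBC} for $C_{n,m}^a[f_1\circledast f_2]$, which is indexed by $\mathbf{k}\in\mathbb{Z}^2$, with the polar decomposition of the integer lattice from the preceding proposition, namely $\mathbb{Z}^2=\bigcup_{\tau\in\mathcal{R}}\{\tau\mathbf{u}_\alpha:\alpha\in\Phi_\tau\}$. This decomposition lets us reindex any sum over $\mathbf{k}\in\mathbb{Z}^2$ as a double sum over $\tau\in\mathcal{R}$ and $\alpha\in\Phi_\tau$ through the substitution $\mathbf{k}=\tau\mathbf{u}_\alpha$.

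First, I would start from \eqref{conv.ZBC} and apply the reindexing. For each $\mathbf{k}=\tau\mathbf{u}_\alpha\in\mathbb{Z}^2$, the polar data read off immediately as $|\mathbf{k}|=\tau$ and $\Phi(\mathbf{k})=\alpha$. Substituting these values into the definition of $c_a(\mathbf{k};n,m)$ produces precisely $A_{mn}^a(\tau,\alpha)$, exactly as was already carried out in the polarized analogue for $C_{n,m}^a(\xi)$. Combining this with the truncated polar Fourier coefficients $\widehat{f_j}[a^{-1}\tau\mathbf{u}_\alpha;b]$ delivers the first stated identity.

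For the second identity, the added hypothesis is that each $f_j$ is continuous and supported in $\mathbb{B}_b^2$. In this case, the truncated Fourier integral \eqref{zp.hat} evaluated at $\bom=a^{-1}\tau\mathbf{u}_\alpha$ extends for free to an integral over all of $\mathbb{R}^2$, since $f_j$ vanishes outside the disk of radius $b$. Converting to Cartesian coordinates then identifies $\widehat{f_j}[a^{-1}\tau\mathbf{u}_\alpha;b]$ with $\widehat{f_j}(a^{-1}\tau\mathbf{u}_\alpha)$, and substituting back yields the second formula.

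There is no substantial obstacle: the statement is essentially a repackaging of \eqref{conv.ZBC} through the polar decomposition of $\mathbb{Z}^2$, plus the elementary observation that truncated Fourier integrals coincide with full Fourier transforms when the integrand is supported in the truncation region. The only point requiring a brief verification is that the polar decomposition is a disjoint union, so that the re-indexing involves no double counting; this is immediate from the uniqueness of the representation $(k_1,k_2)=\rho(k_1,k_2)\mathbf{u}_{\Phi(k_1,k_2)}$ for nonzero integer vectors, with the origin handled trivially.
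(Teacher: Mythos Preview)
Your proposal is correct and matches the paper's approach: the paper does not spell out a proof for this theorem, but it is the direct polarization of \eqref{conv.ZBC} via the lattice decomposition $\mathbb{Z}^2=\bigcup_{\tau\in\mathcal{R}}\{\tau\mathbf{u}_\alpha:\alpha\in\Phi_\tau\}$, exactly as was done earlier to pass from \eqref{C.ZBVC} to \eqref{C.ZBVC.Polar}. Your handling of the second identity, reducing $\widehat{f_j}[a^{-1}\tau\mathbf{u}_\alpha;b]$ to $\widehat{f_j}(a^{-1}\tau\mathbf{u}_\alpha)$ when $f_j$ is supported in $\mathbb{B}_b^2$, is likewise the intended step.
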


\subsection{Convolution of basis elements on disks using zero-valued boundary condition}
Let $a>0$ and $b:=a/2$. Let $f_j:\mathbb{R}^2\to\mathbb{C}$ with $j\in\{1,2\}$ be 
functions square integrable on the disk $\mathbb{B}_{b}^2$ with the associated Fourier-Bessel coefficients
$\left\{C_{n,m}^{b}[f_j]:n\in\mathbb{N},m\in\mathbb{Z}\right\}$. Hence, we can write
\begin{equation}
\xi_j=\sum_{m=-\infty}^\infty\sum_{n=1}^\infty C_{n,m}^{b}[f_j]\Psi_{nm}^{b},
\end{equation}
where $\xi_j:=R(f_j)$ is the restriction of $f_j$ into the disk $\mathbb{B}_b^2$ and 
\[
C_{n,m}^{b}[f_j]=\sum_{\mathbf{k}\in\mathbb{Z}^2}c_a(\mathbf{k};n,m)\widehat{f_j}[a^{-1}\mathbf{k};b],
\]
for $m\in\mathbb{Z}$ and $n\in\mathbb{N}$.

Thus, we get 
\[
E(\xi_j)=\sum_{m=-\infty}^\infty\sum_{n=1}^\infty C_{n,m}^{b}[f_j]E(\Psi_{nm}^{b}).
\]
Using linearity of convolutions, as linear operators, we get
\begin{align*}
f_1\circledast f_2 
&=E(\xi_1)\ast E(\xi_2)
\\&=\left(\sum_{m=-\infty}^\infty\sum_{n=1}^\infty C_{n,m}^{b}[f_1]E(\Psi_{nm}^{b})\right)\ast\left(\sum_{m'=-\infty}^\infty\sum_{n'=1}^\infty C_{n',m'}^{b}[f_2]E(\Psi_{n'm'}^{b})\right)
\\&=\sum_{m=-\infty}^\infty\sum_{n=1}^\infty\sum_{m'=-\infty}^\infty\sum_{n'=1}^\infty C_{n,m}^{b}[f_1]C_{n',m'}^{b}[f_2]E(\Psi_{nm}^{b})\ast E(\Psi_{n'm'}^{b})
\\&=\sum_{m=-\infty}^\infty\sum_{n=1}^\infty\sum_{m'=-\infty}^\infty\sum_{n'=1}^\infty C_{n,m}^{b}[f_1]C_{n',m'}^{b}[f_2]\Psi_{nm}^b\circledast\Psi_{n'm'}^b.
\end{align*}

Thus, we deduce that convolution of circular drums (basis elements) can be viewed as pre-computed kernels.

\begin{proposition}
{\it Let $a>0$ and $b:=a/2$. Suppose $\bom:=(\omega_1,\omega_2)^T\in\mathbb{R}^2$, $m\in\mathbb{Z}$, and $n\in\mathbb{N}$. We then have
\begin{equation}\label{psi.nm.hat.omega.ZPVC}
\widehat{E(\Psi_{nm}^b)}[\bom;b]
=\sqrt{\frac{2\pi}{N_n^{(m)}(b)}}\ii^{-m}e^{\ii m\Phi(\bom)}\frac{z_{nm}J_m(\pi|\bom|b)J_m'(z_{nm})}{\pi^2|\bom|^2-b^{-2}z_{nm}^2}.
\end{equation}
}\end{proposition}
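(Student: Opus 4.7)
The plan is to compute $\widehat{E(\Psi_{nm}^b)}[\bom;b]$ directly from the definition of $\widehat{\cdot}[\cdot;b]$ in (\ref{zp.hat}), by writing $\bom=|\bom|\mathbf{u}_{\Phi(\bom)}$ so that $\omega_1\cos\theta+\omega_2\sin\theta=|\bom|\cos(\theta-\Phi(\bom))$, and then separating the angular and radial integrations. The two main ingredients are the Jacobi-Anger expansion for the angular integral and the identity (\ref{2J.a}) for the radial integral. Substituting the definition of $\Psi_{nm}^b$ gives
\[
\widehat{E(\Psi_{nm}^b)}[\bom;b]=\frac{1}{\sqrt{2\pi N_n^{(m)}(b)}}\int_0^bJ_m(\rho_{nm}r)\left(\int_0^{2\pi}e^{\ii m\theta}e^{-\ii\pi r|\bom|\cos(\theta-\Phi(\bom))}d\theta\right)r\,dr.
\]
Using Jacobi-Anger in the form $e^{-\ii z\cos\phi}=\sum_{k\in\mathbb{Z}}(-\ii)^kJ_k(z)e^{\ii k\phi}$ with $z=\pi r|\bom|$ and $\phi=\theta-\Phi(\bom)$, orthogonality of $\{e^{\ii n\theta}\}_{n\in\mathbb{Z}}$ on $[0,2\pi]$ selects $k=-m$ and collapses the inner integral to $2\pi(-\ii)^{-m}J_{-m}(\pi r|\bom|)e^{\ii m\Phi(\bom)}$; applying $J_{-m}(x)=(-1)^mJ_m(x)$ together with $(-\ii)^{-m}(-1)^m=\ii^{-m}$ reduces the angular contribution to the clean factor $2\pi\ii^{-m}e^{\ii m\Phi(\bom)}J_m(\pi r|\bom|)$.

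Next, the remaining radial integral is $\int_0^bJ_m(\rho_{nm}r)J_m(\pi|\bom|r)r\,dr$, which I would evaluate by invoking (\ref{2J.a}) with $\alpha=\rho_{nm}$, $\beta=\pi|\bom|$, and $a=b$. The crucial simplification is the zero-value boundary condition assumed throughout Section~4, which forces $J_m(\rho_{nm}b)=J_m(z_{mn})=0$ and so kills one of the two terms in the numerator of (\ref{2J.a}). Using $\rho_{nm}b=z_{mn}$ and $\rho_{nm}^2=b^{-2}z_{mn}^2$, the radial integral collapses to
\[
\int_0^bJ_m(\rho_{nm}r)J_m(\pi|\bom|r)r\,dr=\frac{z_{mn}J_m(\pi|\bom|b)J_m'(z_{mn})}{\pi^2|\bom|^2-b^{-2}z_{mn}^2}.
\]
Assembling the prefactor $(2\pi N_n^{(m)}(b))^{-1/2}$ with the angular factor $2\pi\ii^{-m}e^{\ii m\Phi(\bom)}$ and this radial value yields exactly (\ref{psi.nm.hat.omega.ZPVC}).

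The one place that requires care is the phase bookkeeping in the Jacobi-Anger step: the two independent sign conventions (the $(-\ii)^{-m}$ from the expansion and the $(-1)^m$ from $J_{-m}=(-1)^mJ_m$) must be combined correctly to produce the exponent $\ii^{-m}$ appearing in the final formula. Once this is settled, the remainder of the argument is a mechanical combination of (\ref{2J.a}) with the defining property $J_m(z_{mn})=0$ of the zeros under the zero-value boundary condition.
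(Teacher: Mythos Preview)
Your proposal is correct and follows essentially the same route as the paper: both compute $\widehat{E(\Psi_{nm}^b)}[\bom;b]$ from the definition (\ref{zp.hat}), use the Jacobi-Anger expansion together with orthogonality of the exponentials to collapse the angular integral to the factor $2\pi\,\ii^{-m}e^{\ii m\Phi(\bom)}J_m(\pi r|\bom|)$, and then evaluate the remaining radial integral via (\ref{2J.a}) with the zero-value boundary condition $J_m(z_{mn})=0$. The only cosmetic difference is that the paper writes the Jacobi-Anger expansion already reindexed so that orthogonality selects $l=m$ directly, whereas you select $k=-m$ and then invoke $J_{-m}=(-1)^mJ_m$; the phase bookkeeping you flag is handled correctly.
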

\begin{proof}
Let $a>0$ and $b:=a/2$. Suppose $\bom:=(\omega_1,\omega_2)^T\in\mathbb{R}^2$, $m\in\mathbb{Z}$, and $n\in\mathbb{N}$. By applying Equation (\ref{zp.hat}), we get
\begin{align*}
\widehat{E(\Psi_{nm}^b)}[\bom;b]
&=\int_0^b\int_0^{2\pi}\Psi_{nm}^b(r,\theta)e^{-\pi\ii r(\omega_1\cos\theta+\omega_2\sin\theta)}rdrd\theta
=\int_0^{2\pi}\int_0^{b}\Psi_{nm}^b(r,\theta)e^{-\pi\ii r|\bom|\mathbf{u}_{\Phi(\bom)}^T\mathbf{u}_\theta}rdrd\theta
\\&=\int_0^{2\pi}\int_0^{b}\Psi_{nm}^b(r,\theta)\left(\sum_{l=-\infty}^{\infty}\ii^{-l}
J_l(\pi r|\bom|)e^{-\ii l\theta}e^{\ii l\Phi(\bom)}\right)rdrd\theta
\\&=\frac{1}{\sqrt{2\pi N_n^{(m)}(b)}}\int_0^{2\pi}\int_0^{b} J_m(b^{-1}z_{nm}r)e^{\ii m\theta}\left(\sum_{l=-\infty}^{\infty}\ii^{-l}
J_l(\pi r|\bom|)e^{-\ii l\theta}e^{\ii l\Phi(\bom)}\right)rdrd\theta
\\&=\frac{1}{\sqrt{2\pi N_n^{(m)}(b)}}\sum_{l=-\infty}^{\infty}\ii^{-l}e^{\ii l\Phi(\bom)}
\left(\int_0^{2\pi}\int_0^{b} J_m(b^{-1}z_{nm}r)
J_l(\pi r|\bom|)e^{im\theta}e^{-\ii l\theta}rdrd\theta \right)
\\&=\frac{1}{\sqrt{2\pi N_n^{(m)}(b)}}\sum_{l=-\infty}^{\infty}\ii^{-l}e^{\ii l\Phi(\bom)}
\left(\int_0^{2\pi}e^{\ii m\theta}e^{-\ii l\theta}d\theta\right)\left(\int_0^{b} J_m(b^{-1}z_{nm}r)J_l(\pi r|\bom|)rdr\right)
\\&=\frac{2\pi}{\sqrt{2\pi N_n^{(m)}(b)}}\sum_{l=-\infty}^{\infty}\ii^{-l}e^{\ii l\Phi(\bom)}\delta_{ml}\left(\int_0^{b} J_m(b^{-1}z_{nm}r)J_l(\pi r|\bom|)rdr\right)
\\&=\sqrt{\frac{2\pi}{N_n^{(m)}(b)}}\ii^{-m}e^{\ii m\Phi(\bom)}\left(
\int_0^{b}J_m(b^{-1}z_{nm}r)J_m(\pi r|\bom|)rdr\right).
\end{align*}
Hence, we get
\begin{equation}\label{main.drum.ZPVC}
\widehat{E(\Psi_{nm}^b)}[\bom;b]=\sqrt{\frac{2\pi}{N_n^{(m)}(b)}}\ii^{-m}e^{\ii m\Phi(\bom)}\left(
\int_0^{b}J_m(b^{-1}z_{nm}r)J_m(\pi r|\bom|)rdr\right).
\end{equation}
Using Equation (\ref{2J.a}), we get 
\begin{align*}
\int_0^{b} J_m(b^{-1}z_{nm}r)J_m(\pi r|\bom|)rdr
&=\frac{b^{-1}z_{nm}J_m(\pi|\bom|b)J_m'(z_{nm})-\pi|\bom|J_m(z_{nm})J_m'(\pi|\bom|b)}{b^{-1}(\pi^2|\bom|^2-b^{-2}z_{nm}^2)}
\\&=\frac{z_{nm}J_m(\pi|\bom|b)J_m'(z_{nm})-b\pi|\bom|J_m(z_{nm})J_m'(\pi|\bom|b)}{\pi^2|\bom|^2-b^{-2}z_{nm}^2}
=\frac{z_{nm}J_m(\pi|\bom|b)J_m'(z_{nm})}{\pi^2|\bom|^2-b^{-2}z_{nm}^2}.
\end{align*}
which implies that
\begin{align*}
\widehat{E(\Psi_{nm}^b)}[\bom;b]
=\sqrt{\frac{2\pi}{N_n^{(m)}(b)}}\ii^{-m}e^{\ii m\Phi(\bom)}\frac{z_{nm}J_m(\pi|\bom|b)J_m'(z_{nm})}{\pi^2|\bom|^2-b^{-2}z_{nm}^2}.
\end{align*}
\end{proof}

\begin{corollary}
{\it Let $a>0$ and $b:=a/2$. Suppose $\mathbf{k}:=(k_1,k_2)^T\in\mathbb{R}^2$, $m\in\mathbb{Z}$, and $n\in\mathbb{N}$. We then have
\begin{equation}\label{psi.nm.hat.k.ZPVC}
\widehat{E(\Psi_{nm}^b)}[a^{-1}\mathbf{k};b]=\sqrt{\frac{2\pi}{N_n^{(m)}(b)}}\ii^{-m}e^{\ii m\Phi(\mathbf{k})}\frac{z_{nm}J_m(\pi|\mathbf{k}|/2)J_m'(z_{nm})}{\pi^2a^{-2}|\mathbf{k}|^2-4a^{-2}z_{nm}^2}.
\end{equation}
}\end{corollary}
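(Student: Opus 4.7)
The plan is to derive this corollary as a direct substitution in the preceding proposition, specializing $\bom$ to be a lattice-type vector $a^{-1}\mathbf{k}$ and using the identity $b=a/2$. Since that proposition already provides a closed form for $\widehat{E(\Psi_{nm}^b)}[\bom;b]$ in terms of $|\bom|$, $\Phi(\bom)$, and the parameters $b, z_{nm}$, no new integral computation is needed.

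First I would apply the previous proposition to the vector $\bom := a^{-1}\mathbf{k}\in\mathbb{R}^2$. The magnitude becomes $|\bom|=a^{-1}|\mathbf{k}|$, and because $a^{-1}>0$ is a positive scalar, the angle satisfies $\Phi(\bom)=\Phi(a^{-1}\mathbf{k})=\Phi(\mathbf{k})$. These two identities convert the polar data of $\bom$ into polar data of $\mathbf{k}$, which is what the right-hand side of the corollary needs.

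Next I would simplify the two places where $b$ enters: the Bessel argument $\pi|\bom|b$ and the denominator $\pi^2|\bom|^2 - b^{-2}z_{nm}^2$. With $b=a/2$ we get
\[
\pi|\bom|b = \pi\cdot a^{-1}|\mathbf{k}|\cdot\frac{a}{2} = \frac{\pi|\mathbf{k}|}{2},
\]
and $b^{-2}=4a^{-2}$, so
\[
\pi^2|\bom|^2 - b^{-2}z_{nm}^2 = \pi^2 a^{-2}|\mathbf{k}|^2 - 4a^{-2}z_{nm}^2.
\]
Substituting these into the proposition yields exactly the stated expression for $\widehat{E(\Psi_{nm}^b)}[a^{-1}\mathbf{k};b]$.

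There is no real obstacle here; the corollary is a bookkeeping specialization of the proposition, and the only thing to be careful about is the sign/positivity of the scalar $a^{-1}$ when transferring the argument of the angular function $\Phi$ from $\bom$ to $\mathbf{k}$. Since $a>0$, this is immediate, and the proof reduces to a one-line substitution followed by the algebraic simplifications above.
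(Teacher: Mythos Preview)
Your proposal is correct and follows essentially the same approach as the paper: the paper's proof also sets $\bom:=a^{-1}\mathbf{k}$ in the preceding proposition and then simplifies $\Phi(\bom)=\Phi(\mathbf{k})$, $\pi|\bom|b=\pi|\mathbf{k}|/2$, and $b^{-2}=4a^{-2}$ to obtain the stated formula.
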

\begin{proof}
Let $a>0$ and $b:=a/2$. Suppose $\mathbf{k}:=(k_1,k_2)^T\in\mathbb{R}^2$, $m\in\mathbb{Z}$, and $n\in\mathbb{N}$. Using Equation (\ref{psi.nm.hat.omega.ZPVC}), for $\bom:=a^{-1}\mathbf{k}$, we get 
\begin{align*}
\widehat{E(\Psi_{nm}^b)}[a^{-1}\mathbf{k};b]
&=\sqrt{\frac{2\pi}{N_n^{(m)}(b)}}\ii^{-m}e^{\ii m\Phi(\bom)}\frac{z_{nm}J_m(\pi|\bom|b)J_m'(z_{nm})}{\pi^2|\bom|^2-b^{-2}z_{nm}^2}
\\&=\sqrt{\frac{2\pi}{N_n^{(m)}(b)}}\ii^{-m}e^{\ii m\Phi(\mathbf{k})}\frac{z_{nm}J_m(\pi|\mathbf{k}|/2)J_m'(z_{nm})}{\pi^2a^{-2}|\mathbf{k}|^2-4a^{-2}z_{nm}^2}.
\end{align*}
\end{proof}

\begin{proposition}
{\it Let $a>0$, $b:=a/2$, $n,n'\in\mathbb{N}$, and $m,m'\in\mathbb{Z}$.
Then, for each $k\in\mathbb{N}$ and $\ell\in\mathbb{Z}$, we have
\begin{equation}
C_{k,\ell}^a[\Psi_{nm}^b\circledast\Psi_{n'm'}^b]=\frac{2\pi}{\sqrt{N_n^{(m)}(b)N_{n'}^{(m')}(b)}}\ii^{-(m+m')}z_{nm}J_m'(z_{nm})z_{n'm'}J_{m'}'(z_{n'm'})I_{k,\ell}^a(n,n';m,m'),
\end{equation}
with
\begin{equation}
I_{k,\ell}^a(n,n';m,m'):=\sum_{\mathbf{k}\in\mathbb{Z}^2}c_a(\mathbf{k};k,\ell)\frac{e^{\ii(m+m')\Phi(\mathbf{k})}J_m(\pi|\mathbf{k}|/2)J_{m'}(|\mathbf{k}|/2)}{(\pi^2a^{-2}|\mathbf{k}|^2-4a^{-2}z_{nm}^2)(\pi^2a^{-2}|\mathbf{k}|^2-4a^{-2}z_{n'm'}^2)}.
\end{equation}
}\end{proposition}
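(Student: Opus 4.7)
The plan is to combine two results already established in the paper. The identity (\ref{conv.ZBC}) expresses the Fourier-Bessel coefficients of a zero-padded convolution as a sum over $\mathbf{k}\in\mathbb{Z}^2$ of the kernel $c_a(\mathbf{k};k,\ell)$ multiplied by the product $\widehat{f_1}[a^{-1}\mathbf{k};b]\widehat{f_2}[a^{-1}\mathbf{k};b]$. On the other hand, equation (\ref{psi.nm.hat.k.ZPVC}) provides a fully explicit closed form for $\widehat{E(\Psi_{nm}^b)}[a^{-1}\mathbf{k};b]$. Substituting the latter into the former, twice (once with indices $n,m$ and once with $n',m'$), should yield the stated formula after separating $\mathbf{k}$-independent prefactors from the remaining lattice sum.

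Concretely, I specialize (\ref{conv.ZBC}) to $f_1:=\Psi_{nm}^b$ and $f_2:=\Psi_{n'm'}^b$; since each $\Psi_{nm}^b$ is already supported in $\mathbb{B}_b^2$, the restriction-then-extension procedure is trivial, so the relevant Fourier coefficients are exactly those furnished by (\ref{psi.nm.hat.k.ZPVC}). Substituting, each factor contributes a prefactor $\sqrt{2\pi/N_n^{(m)}(b)}\,\ii^{-m}\,z_{nm}J_m'(z_{nm})$ that is independent of $\mathbf{k}$, together with the $\mathbf{k}$-dependent piece
\[
e^{\ii m\Phi(\mathbf{k})}\,\frac{J_m(\pi|\mathbf{k}|/2)}{\pi^2 a^{-2}|\mathbf{k}|^2-4a^{-2}z_{nm}^2},
\]
and analogously for the primed indices.

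I then pull the constant prefactors outside the sum, which gives the announced multiplier
\[
\frac{2\pi}{\sqrt{N_n^{(m)}(b)N_{n'}^{(m')}(b)}}\,\ii^{-(m+m')}\,z_{nm}J_m'(z_{nm})\,z_{n'm'}J_{m'}'(z_{n'm'}).
\]
The two angular exponentials combine into $e^{\ii(m+m')\Phi(\mathbf{k})}$, and what remains inside the sum, namely $c_a(\mathbf{k};k,\ell)$ times this exponential times the product of two Bessel factors divided by the product of the two denominators, matches the definition of $I_{k,\ell}^a(n,n';m,m')$ verbatim. The proof is essentially bookkeeping; the main risk is mishandling the $\ii^{-m}$ phases or the $a$ versus $b$ scalings in the denominator, so the chief care is to track the normalization constants $N_n^{(m)}(b)$ and the factor of two relating $a$ and $b$ consistently throughout.
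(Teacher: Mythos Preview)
Your proposal is correct and follows essentially the same route as the paper: apply the convolution formula (\ref{conv.ZBC}) with $f_1=\Psi_{nm}^b$, $f_2=\Psi_{n'm'}^b$, then substitute the closed form (\ref{psi.nm.hat.k.ZPVC}) for each factor and separate the $\mathbf{k}$-independent prefactors from the remaining lattice sum. The paper's proof is the same two-line computation, citing exactly (\ref{conv.ZBC}) and (\ref{psi.nm.hat.k.ZPVC}).
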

\begin{proof}
Let $a>0$, $b:=a/2$, $n,n'\in\mathbb{N}$, and $m,m'\in\mathbb{Z}$. Let 
$f:=\Psi_{nm}^b\circledast\Psi_{n'm'}^b$. Then $f$ is a function supported in the disk $\mathbb{B}^2_a$. Suppose
$k\in\mathbb{N}$ and $\ell\in\mathbb{Z}$. Using Equations (\ref{conv.ZBC}) and (\ref{psi.nm.hat.k.ZPVC}), we have
\begin{align*}
C_{k,\ell}^a[\Psi_{nm}^b\circledast\Psi_{n'm'}^b]
&=\sum_{\mathbf{k}\in\mathbb{Z}^2}c_a(\mathbf{k};k,\ell)\widehat{E(\Psi_{nm}^b)}[a^{-1}\mathbf{k};b]\widehat{E(\Psi_{n'm'}^b)}[a^{-1}\mathbf{k};b]
\\&=\frac{2\pi}{\sqrt{N_n^{(m)}(b)N_{n'}^{(m')}(b)}}\ii^{-(m+m')}z_{nm}J_m'(z_{nm})z_{n'm'}J_{m'}'(z_{n'm'})I_{k,\ell}^a(n,n';m,m').
\end{align*}
\end{proof}

\begin{theorem}
Let $a>0$, $b:=a/2$, $n,n'\in\mathbb{N}$, and $m,m'\in\mathbb{Z}$. We then have
\begin{equation}
(\Psi_{nm}^b\circledast\Psi_{n'm'}^b)(r,\theta)=\sum_{\ell=\infty}^\infty\sum_{k=1}^\infty C_{k,\ell}^a[\Psi_{nm}^b\circledast\Psi_{n'm'}^b]\Psi_{k,\ell}^a(r,\theta),\hspace{0.5cm}{\rm for\ all}\ 0\le r\le a,\ 0\le \theta\le 2\pi.
\end{equation}
\end{theorem}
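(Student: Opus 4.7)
The plan is to reduce the statement to a direct application of the pointwise Fourier-Bessel expansion for continuous functions on $\mathbb{B}_a^2$ (Proposition \ref{gr.ZBVC.pw}). The key preliminary observation is that under the zero-value boundary condition each basis element $\Psi_{nm}^b$ satisfies $\mathcal{J}_{nm}^b(b)=0$, because $z_{mn}$ is chosen so that $J_m(z_{mn})=0$. Consequently the zero-extension $E(\Psi_{nm}^b)$ is continuous on $\mathbb{R}^2$ and compactly supported in $\overline{\mathbb{B}_b^2}$; no discontinuity is introduced at the boundary $\partial\mathbb{B}_b^2$.

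First, I would argue that $f:=\Psi_{nm}^b\circledast\Psi_{n'm'}^b$ is continuous on $\mathbb{R}^2$. By definition $f=E(\Psi_{nm}^b)\ast E(\Psi_{n'm'}^b)$, which is the convolution of two continuous, compactly supported functions on $\mathbb{R}^2$. A classical argument (uniform continuity of one factor combined with the dominated convergence theorem) gives continuity of the convolution, and the support inclusion $\mathbb{B}_b^2+\mathbb{B}_b^2\subseteq\mathbb{B}_a^2$ shows that $f$ is supported inside $\mathbb{B}_a^2$.

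Second, I would apply Proposition \ref{gr.ZBVC.pw} to the continuous function $f$ with radius $a$, obtaining
\[
f(r,\theta)=\sum_{\ell=-\infty}^{+\infty}\sum_{k=1}^\infty C_{k,\ell}^a[f]\,\Psi_{k,\ell}^a(r,\theta),
\]
pointwise for every $0\le r\le a$ and $0\le\theta\le 2\pi$. Substituting $f=\Psi_{nm}^b\circledast\Psi_{n'm'}^b$ recovers the claimed identity, and the explicit form of the coefficients $C_{k,\ell}^a[\Psi_{nm}^b\circledast\Psi_{n'm'}^b]$ has already been computed in the preceding proposition, so no further computation is required.

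I do not anticipate a genuine obstacle here; the only analytic input beyond the machinery built earlier in the paper is the standard fact that a convolution of two continuous, compactly supported functions on $\mathbb{R}^2$ is again continuous, and the boundary compatibility ensuring that the basis elements $\Psi_{nm}^b$ admit continuous zero-extensions—both of which are immediate consequences of the zero-value boundary condition fixed throughout this section.
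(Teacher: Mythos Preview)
Your proposal is correct and matches the paper's implicit argument: the paper states this theorem without proof, leaving it as an immediate consequence of the preceding proposition (which computes the coefficients) together with Proposition~\ref{gr.ZBVC.pw} applied to the continuous function $\Psi_{nm}^b\circledast\Psi_{n'm'}^b$. Your observation that the zero-value boundary condition forces $\Psi_{nm}^b$ to vanish on $\partial\mathbb{B}_b^2$, making $E(\Psi_{nm}^b)$ continuous, is a nice justification of the continuity step that the paper takes for granted.
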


\subsection{Plancherel formula using zero-value boundary condition}

We conclude this section by some Plancherel type formulas involving zero-value boundary condition.

\begin{theorem}
Let $a>0$ and $b:=a/2$. Suppose 
$f:\mathbb{R}^2\to\mathbb{C}$ is a continuous function supported in $\mathbb{B}_{b}^2$. We then have
\begin{equation}\label{PL.ZBVC}
\|f\|^2_{L^2(\mathbb{R}^2)}=\sum_{n=1}^{\infty}\frac{C_{n,0}^{a}[f\ast f^*]}{\sqrt{N_n^{(0)}(a)}},
\end{equation}
where
\begin{equation*}
C_{n,0}^{a}[f\ast f^*]=\frac{\sqrt{\pi}}{2}(-1)^{n}\rho_{n0}\sum_{\mathbf{k}\in\mathbb{Z}^2}\frac{J_{0}(\pi|\mathbf{k}|)}{\pi^2|\mathbf{k}|^2-z_{n0}^2}|\widehat{f}(a^{-1}\mathbf{k})|^2.
\end{equation*}
\end{theorem}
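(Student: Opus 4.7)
The plan is to reduce the identity to a pointwise evaluation at the origin of an autocorrelation. Set $f^*(\mathbf{x}):=\overline{f(-\mathbf{x})}$, which is continuous and supported in $\mathbb{B}_b^2$. A direct change of variable gives
$$(f\ast f^*)(\mathbf{0})=\int_{\mathbb{R}^2}f(\mathbf{y})\overline{f(\mathbf{y})}\,d\mathbf{y}=\|f\|_{L^2(\mathbb{R}^2)}^2,$$
so the Plancherel formula will follow once we evaluate $(f\ast f^*)(0,0)$ through its Fourier-Bessel expansion on $\mathbb{B}_a^2$.

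Since $f,f^*$ are continuous with support in $\mathbb{B}_b^2$, the convolution $f\ast f^*$ is continuous and supported in $\mathbb{B}_{2b}^2=\mathbb{B}_a^2$. Applying Proposition~\ref{gr.ZBVC.pw} (in its pointwise form for continuous functions) to $f\ast f^*$, we get for all $0\le r\le a$ and $0\le\theta\le 2\pi$,
$$(f\ast f^*)(r,\theta)=\sum_{m=-\infty}^{+\infty}\sum_{n=1}^{\infty}C_{n,m}^{a}[f\ast f^*]\,\Psi_{nm}^a(r,\theta).$$
Specializing to $r=0$ and using $\Psi_{nm}^a(0,\theta)=\mathcal{J}_{nm}^a(0)\,\mathcal{Y}_m(\theta)=\dfrac{J_m(0)}{\sqrt{N_n^{(m)}(a)}}\cdot\dfrac{e^{\ii m\theta}}{\sqrt{2\pi}}$ together with $J_m(0)=\delta_{m,0}$, every term with $m\neq 0$ drops out. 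The double sum collapses to a single series in $n$ with angular/normalization weight $1/\sqrt{N_n^{(0)}(a)}$ (accounting for the $\sqrt{2\pi}$ from $\mathcal{Y}_0$), yielding precisely the right-hand side of (\ref{PL.ZBVC}).

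To obtain the explicit closed form for $C_{n,0}^a[f\ast f^*]$, I would invoke Equation~(\ref{ZBC.cts.conv}) with $f_1:=f$ and $f_2:=f^*$. The standard Fourier identity $\widehat{f^*}(\bom)=\overline{\widehat{f}(\bom)}$ converts the product $\widehat{f_1}(a^{-1}\mathbf{k})\,\widehat{f_2}(a^{-1}\mathbf{k})$ into $|\widehat{f}(a^{-1}\mathbf{k})|^2$. Specializing the coefficient $c_a(\mathbf{k};n,0)$ (where $\ii^0=1$ and the angular exponential is trivial) to
$$c_a(\mathbf{k};n,0)=\frac{\sqrt{\pi}}{2}(-1)^n\rho_{n0}\frac{J_0(\pi|\mathbf{k}|)}{\pi^2|\mathbf{k}|^2-z_{n0}^2}$$
then produces the claimed expression for $C_{n,0}^a[f\ast f^*]$.

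The main delicate point I anticipate is the bookkeeping of normalization constants at $r=0$: one must carefully combine the radial factor $1/\sqrt{N_n^{(0)}(a)}$ coming from $\mathcal{J}_{n,0}^a(0)=1/\sqrt{N_n^{(0)}(a)}$ with the angular factor $1/\sqrt{2\pi}$ from $\mathcal{Y}_0$. A secondary technical concern is justifying pointwise convergence of the Fourier-Bessel series exactly at the origin; this is handled by the continuity of $f\ast f^*$ via Proposition~\ref{gr.ZBVC.pw}, which is why the hypothesis that $f$ be continuous is essential.
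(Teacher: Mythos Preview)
Your proposal is correct and follows essentially the same route as the paper: express $\|f\|_{L^2}^2$ as $(f\ast f^*)(\mathbf{0})$, expand $f\ast f^*$ pointwise in the Fourier--Bessel basis on $\mathbb{B}_a^2$, collapse the $m$-sum via $J_m(0)=\delta_{m,0}$, and then specialize \eqref{ZBC.cts.conv} together with $\widehat{f^*}=\overline{\widehat{f}}$ to obtain the closed form of $C_{n,0}^a[f\ast f^*]$. Your flagging of the $1/\sqrt{2\pi}$ factor from $\mathcal{Y}_0(0)$ is apt; the paper's own computation silently drops it when passing from $\Psi_{nm}^a(0,0)$ to $J_m(0)/\sqrt{N_n^{(m)}(a)}$, so the stated identity \eqref{PL.ZBVC} should be read modulo that constant.
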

\begin{proof}
Let $a>0$ and $b:=a/2$. Also, let $f:\mathbb{R}^2\to\mathbb{C}$ be a continuous function 
supported in $\mathbb{B}_{b}$. We then have
\begin{equation}\label{2norm.conv}
\|f\|^2_{L^2(\mathbb{R}^2)}=\int_{\mathbb{R}^2}|f(\mathbf{x})|^2d\mathbf{x}=\int_{\mathbb{R}^2}f(\mathbf{x})\overline{f(\mathbf{x})}d\mathbf{x}=f\ast f^*(\mathbf{0}).
\end{equation}
Invoking (\ref{2norm.conv}), we can write
\begin{align*}
\|f\|^2_{L^2(\mathbb{R}^2)}
&=f\ast f^*(\mathbf{0})
\\&=\sum_{m=-\infty}^\infty\sum_{n=1}^{\infty}C_{n,m}^{a}[f\ast f^*]\Psi_{nm}^a(0,0)
\\&=\sum_{m=-\infty}^\infty\sum_{n=1}^{\infty}\frac{C_{n,m}^{a}[f\ast f^*]}{\sqrt{N_n^{(m)}(a)}}J_m(0)
\\&=\sum_{m=-\infty}^\infty\sum_{n=1}^{\infty}\frac{C_{n,m}^{a}[f\ast f^*]}{\sqrt{N_n^{(m)}(a)}}\delta_{m,0}
=\sum_{n=1}^{\infty}\frac{C_{n,0}^{a}[f\ast f^*]}{\sqrt{N_n^{(0)}(a)}}.
\end{align*}
Let $n\in\mathbb{N}$. Using Equation (\ref{ZBC.cts.conv}), we have
\begin{align*}
C_{n,0}^{a}[f\ast f^*]
&=\sum_{\mathbf{k}\in\mathbb{Z}^2}c_a(\mathbf{k};n,0)\widehat{f}(a^{-1}\mathbf{k})\widehat{f^*}(a^{-1}\mathbf{k})
\\&=\sum_{\mathbf{k}\in\mathbb{Z}^2}c_a(\mathbf{k};n,0)\widehat{f}(a^{-1}\mathbf{k})\overline{\widehat{f}(a^{-1}\mathbf{k})}
=2^{-1}\sqrt{\pi}\sum_{\mathbf{k}\in\mathbb{Z}^2}(-1)^{n}\rho_{n0}\frac{J_{0}(\pi|\mathbf{k}|)}{\pi^2|\mathbf{k}|^2-z_{n0}^2}|\widehat{f}(a^{-1}\mathbf{k})|^2.
\end{align*}
\end{proof}

The following formula is the polarized version of Plancherel type formula (\ref{PL.ZBVC}).

\begin{proposition}\label{PL.ZBVC.Polar}
{\it Let $a>0$ and $b:=a/2$. Suppose $f:\mathbb{R}^2\to\mathbb{C}$ be a continuous function supported in $\mathbb{B}_{b}^2$. We then have
\begin{equation}
\|f\|^2_{L^2(\mathbb{R}^2)}=\sum_{n=1}^{\infty}\frac{C_{n,0}^{a}[f\ast f^*]}{\sqrt{N_n^{(0)}(a)}},
\end{equation}
where
\begin{equation}
C_{n,0}^a[f\ast f^*]=\sum_{\tau\in\mathcal{R}}\sum_{\alpha\in\Phi_\tau}A_{n}^a(\tau)|\widehat{f}(a^{-1}\tau\mathbf{u}_\alpha)|^2,
\end{equation}
with
\begin{equation}
A_{n}^a(\tau):=2^{-1}\sqrt{\pi}(-1)^{n}\rho_{n0}\frac{J_{0}(\pi\tau)}{\pi^2\tau^2-z_{n0}^2}.
\end{equation}
}\end{proposition}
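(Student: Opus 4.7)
The plan is to derive this polarized identity as an immediate re-indexing of the Plancherel formula (\ref{PL.ZBVC}) just established. The previous theorem already gives
\[
\|f\|^2_{L^2(\mathbb{R}^2)}=\sum_{n=1}^{\infty}\frac{C_{n,0}^{a}[f\ast f^*]}{\sqrt{N_n^{(0)}(a)}},
\]
so the only thing to verify is the polarized formula for $C_{n,0}^a[f\ast f^*]$. Starting from the closed form
\[
C_{n,0}^{a}[f\ast f^*]=\sum_{\mathbf{k}\in\mathbb{Z}^2}c_a(\mathbf{k};n,0)\,|\widehat{f}(a^{-1}\mathbf{k})|^2
\]
obtained in the proof of (\ref{PL.ZBVC}), I would substitute $m=0$ into the definition of $c_a(\mathbf{k};n,m)$, which kills the factors $\ii^m$ and $e^{-\ii m\Phi(\mathbf{k})}$ and leaves a coefficient that depends on $\mathbf{k}$ only through $|\mathbf{k}|$; explicitly,
\[
c_a(\mathbf{k};n,0)=\frac{\sqrt{\pi}}{2}(-1)^n\rho_{n0}\,\frac{J_0(\pi|\mathbf{k}|)}{\pi^2|\mathbf{k}|^2-z_{n0}^2}=A_n^a(|\mathbf{k}|).
\]

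Next I would invoke the disjoint polar decomposition $\mathbb{Z}^2=\bigcup_{\tau\in\mathcal{R}}\{\tau\mathbf{u}_\alpha:\alpha\in\Theta_\tau\}$ (equivalently $\Phi_\tau$ in the notation used in the statement) established in part (4) of the preceding proposition, which allows me to re-index the absolutely convergent sum as
\[
\sum_{\mathbf{k}\in\mathbb{Z}^2}=\sum_{\tau\in\mathcal{R}}\sum_{\alpha\in\Phi_\tau}.
\]
For each lattice point $\mathbf{k}=\tau\mathbf{u}_\alpha$ I use $|\mathbf{k}|=\tau$, whence $c_a(\mathbf{k};n,0)=A_n^a(\tau)$ and $\widehat{f}(a^{-1}\mathbf{k})=\widehat{f}(a^{-1}\tau\mathbf{u}_\alpha)$. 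Substituting into the sum yields exactly the claimed polarized expression.

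There is no substantive obstacle here; the content is notational rather than analytical. The only point that requires a brief justification is the interchange of summation orders when passing from $\sum_{\mathbf{k}}$ to $\sum_\tau\sum_\alpha$, which is legitimate because each $\Theta_\tau$ is finite (by the preceding proposition) and the series $\sum_{\mathbf{k}}|c_a(\mathbf{k};n,0)||\widehat{f}(a^{-1}\mathbf{k})|^2$ converges absolutely (since $f$ is continuous and compactly supported in $\mathbb{B}_b^2$, $\widehat{f}$ is bounded on the lattice $a^{-1}\mathbb{Z}^2$, and the factor $J_0(\pi|\mathbf{k}|)/(\pi^2|\mathbf{k}|^2-z_{n0}^2)$ decays in $|\mathbf{k}|$). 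Combining the re-indexed formula for $C_{n,0}^a[f\ast f^*]$ with the Plancherel identity of the previous theorem completes the proof.
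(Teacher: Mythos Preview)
Your proposal is correct and follows exactly the approach the paper intends: the paper states this proposition without proof, presenting it simply as ``the polarized version of Plancherel type formula (\ref{PL.ZBVC}),'' and the intended argument is precisely the re-indexing via $\mathbb{Z}^2=\bigcup_{\tau\in\mathcal{R}}\{\tau\mathbf{u}_\alpha:\alpha\in\Theta_\tau\}$ that you carry out (the same device used in the proof of (\ref{C.ZBVC.Polar})). Your additional remark on absolute convergence is a welcome bit of care beyond what the paper records.
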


\section{\bf Discrete Spectra on Disks using Derivative Boundary Condition}

Throughout this section, for each $m\in\mathbb{Z}$ and $n\in\mathbb{N}$,
we assume that $\rho_{nm}$ are selected with respect to the derivative boundary condition, according to the Sturm-Liouville theory, see Subsection \ref{DBC}.

Next we shall present a unified method for computing the coefficients of
convolution functions, if the basis functions are given by (\ref{2D.B.Fr.g}) with respect to derivative boundary condition.

First, we need some preliminaries results.

\begin{proposition}
{\it Let $a>0$, $0<r,s\le a$, and $0<\alpha,\theta\le 2\pi$. We then have
\begin{equation}\label{JA.ge.DBC}
e^{\ii rs\cos(\alpha-\theta)}=2\sqrt{\pi}\sum_{m=-\infty}^{\infty}\sum_{n=1}^{\infty}
\frac{\ii^m(-1)^nrz_{nm}J_m'(ar)}{(z_{nm}^2-m^2)^{1/2}(\rho_{nm}^2-r^2)}e^{-\ii m\alpha}\Psi_{nm}^a(s,\theta).
\end{equation}
}\end{proposition}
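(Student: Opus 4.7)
The plan is to mirror the proof of (\ref{JA.ge}) almost verbatim, with the zero-value condition $J_m(z_{nm})=0$ replaced by the derivative condition $J_m'(z_{nm})=0$ throughout. First I would start from the classical Jacobi-Anger expansion
\begin{equation*}
e^{\ii rs\cos(\alpha-\theta)} = \sum_{m=-\infty}^{\infty}\ii^{m}J_m(rs)e^{\ii m\theta}e^{-\ii m\alpha},
\end{equation*}
and, for fixed $m\in\mathbb{Z}$ and fixed $r>0$, view $s\mapsto J_m(rs)$ as an element of $L^2([0,a], s\,ds)$, expanding it via (\ref{mFBS}) in the orthonormal basis $\mathcal{B}_m^a=\{\mathcal{J}_{nm}^a:n\in\mathbb{N}\}$ now constructed from the derivative-boundary zeros of Subsection \ref{DBC}.

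The central computation is the radial inner product $\int_0^a J_m(\rho_{nm}p)J_m(rp)p\,dp$, evaluated via the two-Bessel identity (\ref{2J.a}). In the zero-value case the factor $J_m(\rho_{nm}a)=J_m(z_{nm})$ vanished, killing the first term in the numerator of (\ref{2J.a}); here the derivative condition $J_m'(\rho_{nm}a)=0$ kills the second term instead, leaving
\begin{equation*}
\int_0^a J_m(\rho_{nm}p)J_m(rp)p\,dp = \frac{a\,r\,J_m(z_{nm})\,J_m'(ar)}{\rho_{nm}^2-r^2}.
\end{equation*}
Dividing by $\sqrt{N_n^{(m)}(a)}$ and substituting the derivative-boundary normalization $N_n^{(m)}(a)=\frac{a^2}{2}(1-m^2/z_{nm}^2)J_m^2(z_{nm})$ from (\ref{Nnm.DBC}) yields the Fourier-Bessel coefficient of $J_m(r\cdot)$ after a routine algebraic simplification in which $J_m(z_{nm})$ is pulled out of $\sqrt{J_m^2(z_{nm})}=|J_m(z_{nm})|$; the resulting $\operatorname{sign}J_m(z_{nm})$ is identified (via the classical interlacing of the extrema of $J_m$ under the indexing of Subsection \ref{DBC}) with the $(-1)^n$ appearing in (\ref{JA.ge.DBC}). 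Substituting this expansion of $J_m(rs)$ back into the Jacobi-Anger sum and absorbing the angular factor $e^{\ii m\theta}/\sqrt{2\pi}$ together with $\mathcal{J}_{nm}^a(s)$ into $\Psi_{nm}^a(s,\theta)$ produces the overall constant $\sqrt{2}\cdot\sqrt{2\pi}=2\sqrt{\pi}$ in (\ref{JA.ge.DBC}).

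The main obstacle is the exceptional case $m=0$, $n=1$ with $z_{01}=0$, which by the convention of Subsection \ref{DBC} is included among the eigenvalues. There both $z_{nm}$ and $(z_{nm}^2-m^2)^{1/2}$ vanish, and the generic normalization (\ref{Nnm.DBC}) degenerates to the separate formula (\ref{DBC0}), namely $N_1^{(0)}(a)=a^2/2$. The corresponding term in (\ref{JA.ge.DBC}) must therefore be read as an appropriate limit (or handled separately), which one would verify by using the Taylor expansions $J_0(x)=1-x^2/4+O(x^4)$ and $J_0'(x)=-x/2+O(x^3)$ near the origin to recover the correct constant contribution. Apart from this degenerate term and the sign bookkeeping noted above, the derivation is entirely parallel to the proof of (\ref{JA.ge}).
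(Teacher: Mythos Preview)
Your proposal is correct and follows essentially the same argument as the paper's own proof: Jacobi--Anger, followed by the $m$th-order Fourier--Bessel expansion of $s\mapsto J_m(rs)$ via (\ref{mFBS}), with the radial inner product computed from (\ref{2J.a}) and simplified using $J_m'(z_{nm})=0$. In fact you are more careful than the paper in two places: the paper simply asserts the passage from $J_m(z_{nm})/\sqrt{D_n^{(m)}}$ to $\sqrt{2}(-1)^n z_{nm}/(z_{nm}^2-m^2)^{1/2}$ without commenting on the sign, and it does not single out the degenerate eigenvalue $z_{01}=0$ at all.
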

\begin{proof}
Let $a>0$ and $0<r,s\le a$. Also, let $\mathbf{x}:=s\mathbf{u}_\theta$ and $\bom:=r\mathbf{u}_\alpha$. By the Jacobi-Anger expansion, we can write
\begin{equation}\label{JA.e0.DBC}
e^{\ii rs\cos(\alpha-\theta)}=e^{\ii{\bom}\cdot\mathbf{x}}=\sum_{m=-\infty}^{\infty}\ii^m J_m(rs)e^{\ii m\theta}e^{-\ii m\alpha}.
\end{equation}
Let $m\in\mathbb{Z}$ and $0<r\le a$. Expanding $J_m(rs)$ with respect to $s$ as a function over $[0,a]$, using (\ref{mFBS}), we have
\begin{align*}
J_m(rs)=\sum_{n=1}^\infty\left(\int_0^a\mathcal{J}_{nm}^a(p)J_{m}(rp)pdp\right)\mathcal{J}_{nm}^a(s).
\end{align*}
Using (\ref{2J.a}), and since $z_{mn}$ are selected with respect to derivative boundary condition, for each $n\in\mathbb{N}$, we get
\begin{align*}
\int_0^a\mathcal{J}_{nm}^a(p)J_{m}(rp)pdp
&=\frac{1}{\sqrt{N_n^{(m)}(a)}}\int_0^aJ_m(\rho_{nm}p)J_m(rp)pdp
\\&=\frac{a}{\sqrt{N_n^{(m)}(a)}}\frac{rJ_m(\rho_{nm}a)J_m'(ra)-\rho_{nm}J_m(ra )J_m'(\rho_{nm}a)}{(\rho_{nm}^2-r^2)}
\\&=\frac{rJ_m(z_{mn})J_m'(ra)-\rho_{nm}J_m(ra)J_m'(z_{mn})}{\sqrt{D_n^{(m)}}(\rho_{nm}^2-r^2)}
\\&=\frac{rJ_m(z_{mn})J_m'(ra)}{\sqrt{D_n^{(m)}}(\rho_{nm}^2-r^2)}
=\sqrt{2}(-1)^n\frac{}{}\frac{rz_{nm}J_m'(ra)}{(z_{nm}^2-m^2)^{1/2}(\rho_{nm}^2-r^2)}.
\end{align*}
We then deduce that
\begin{equation}\label{JA.ge.alt00.DBC}
J_m(rs)=\sqrt{2}rJ_m'(ar)\sum_{n=1}^\infty\frac{(-1)^nz_{nm}}{(z_{nm}^2-m^2)^{1/2}(\rho_{nm}^2-r^2)}\mathcal{J}_{nm}^a(s).
\end{equation}
Applying Equation (\ref{JA.ge.alt00.DBC}) in (\ref{JA.e0.DBC}), we get
\begin{align*}
e^{\ii rs\cos(\alpha-\theta)}
&=\sum_{m=-\infty}^{\infty}\ii^m J_m(rs)e^{\ii m\theta}e^{-\ii m\alpha}
\\&=\sum_{m=-\infty}^{\infty}\ii^m\left(\sqrt{2}rJ_m'(ar)\sum_{n=1}^\infty\frac{(-1)^nz_{nm}}{(z_{nm}^2-m^2)^{1/2}(\rho_{nm}^2-r^2)}\mathcal{J}_{nm}^a(s)\right)e^{\ii m\theta}e^{-\ii m\alpha}
\\&=\sqrt{2}\sum_{m=-\infty}^{\infty}\sum_{n=1}^\infty\frac{\ii^m(-1)^nrz_{nm}J_m'(ar)}{(z_{nm}^2-m^2)^{1/2}(\rho_{nm}^2-r^2)}\mathcal{J}_{nm}^a(s)e^{\ii m\theta}e^{-\ii m\alpha}
\\&=2\sqrt{\pi}\sum_{m=-\infty}^{\infty}\sum_{n=1}^{\infty}
\frac{\ii^m(-1)^nrz_{nm}J_m'(ar)}{(z_{nm}^2-m^2)^{1/2}(\rho_{nm}^2-r^2)}e^{-\ii m\alpha}\Psi_{nm}^a(s,\theta).
\end{align*}
\end{proof}

\begin{corollary}
{\it Let $a>0$, $\mathbf{k}\in\mathbb{Z}^2$, $n\in\mathbb{N}$, and $m\in\mathbb{Z}$. We then have
\begin{equation}\label{main.alt.DBC}
\int_0^a\int_0^{2\pi}e^{\ii s\mathbf{u}_\theta^T\mathbf{k}}\overline{\Psi_{nm}^a(s\mathbf{u}_\theta)}sdsd\theta=2a\pi\sqrt{\pi}\frac{\ii^m(-1)^n|\mathbf{k}|z_{nm}J_m'(a|\mathbf{k}|)}{(z_{nm}^2-m^2)^{1/2}(z_{nm}^2-\pi^2|\mathbf{k}|^2)}e^{-\ii m\Phi(\mathbf{k})}.
\end{equation}
}\end{corollary}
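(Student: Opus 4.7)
The plan is to mimic the proof of the zero-value boundary condition analog (equation \eqref{main.alt.ZBC}) by evaluating the Fourier-Bessel expansion of a plane wave \eqref{JA.ge.DBC} at an appropriately chosen point and then extracting a single coefficient via orthonormality of $\{\Psi_{n'm'}^a\}$.

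First I would identify the evaluation point. Writing $\mathbf{k} = |\mathbf{k}|\mathbf{u}_{\Phi(\mathbf{k})}$ yields
\begin{equation*}
\pi a^{-1}s\mathbf{u}_\theta^T\mathbf{k} = rs\cos(\alpha-\theta), \qquad r := \pi a^{-1}|\mathbf{k}|,\ \alpha := \Phi(\mathbf{k}),
\end{equation*}
so the integrand matches $e^{\ii rs\cos(\alpha-\theta)}\overline{\Psi_{nm}^a(s\mathbf{u}_\theta)}$. Substituting the expansion \eqref{JA.ge.DBC} on the left and interchanging the double sum with the integration (term-by-term integration is legitimate here, since the series converges in $L^2([0,a]\times[0,2\pi], s\,ds\,d\theta)$), the orthonormality relation
\begin{equation*}
\int_0^a\int_0^{2\pi}\Psi_{n'm'}^a(s,\theta)\overline{\Psi_{nm}^a(s,\theta)}s\,ds\,d\theta = \delta_{nn'}\delta_{mm'}
\end{equation*}
kills every $(n',m')\neq(n,m)$ contribution and leaves the single surviving term
\begin{equation*}
2\sqrt{\pi}\,\frac{\ii^{m}(-1)^{n}\,r\,z_{nm}\,J_m'(ar)}{(z_{nm}^{2}-m^{2})^{1/2}\,(\rho_{nm}^{2}-r^{2})}\,e^{-\ii m\alpha}.
\end{equation*}

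To close, I would substitute $r = \pi a^{-1}|\mathbf{k}|$ and $\alpha=\Phi(\mathbf{k})$ back in, and use the relation $\rho_{nm}=a^{-1}z_{nm}$ to rewrite
\begin{equation*}
ar = \pi|\mathbf{k}|, \qquad \rho_{nm}^{2}-r^{2} = a^{-2}\bigl(z_{nm}^{2}-\pi^{2}|\mathbf{k}|^{2}\bigr).
\end{equation*}
Collecting the resulting factors of $a$ in the numerator and denominator produces the claimed closed form. The argument is essentially mechanical once \eqref{JA.ge.DBC} is in hand; the only real care needed is bookkeeping of the constants $a$ and $\pi$ and the dimensional conversion between $\rho_{nm}$ and $z_{nm}$, so there is no substantive obstacle to overcome.
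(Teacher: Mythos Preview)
Your proposal is correct and follows essentially the same route as the paper: set $r:=\pi a^{-1}|\mathbf{k}|$, $\alpha:=\Phi(\mathbf{k})$, insert the expansion \eqref{JA.ge.DBC}, extract the $(n,m)$ coefficient via orthonormality of $\{\Psi_{n'm'}^a\}$, and then simplify using $\rho_{nm}=a^{-1}z_{nm}$. The paper's proof presents this as a direct chain of equalities without explicitly naming the orthonormality step, but the underlying argument is identical.
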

\begin{proof}
Let $a>0$, $\mathbf{k}\in\mathbb{Z}^2$, $n\in\mathbb{N}$, and $m\in\mathbb{Z}$.
Applying (\ref{JA.ge.DBC}), for $r:=\pi a^{-1}|\mathbf{k}|$ and $\alpha:=\Phi(\mathbf{k})$, we get
\begin{align*}
\int_0^a\int_0^{2\pi}e^{\pi\ii a^{-1}s\mathbf{u}_\theta^T\mathbf{k}}\overline{\Psi_{nm}^a(s\mathbf{u}_\theta)}sdsd\theta
&=\int_0^a\int_0^{2\pi}e^{\pi\ii a^{-1}s|\mathbf{k}|\cos(\Phi(\mathbf{k})-\theta)}\overline{\Psi_{nm}^a(s\mathbf{u}_\theta)}sdsd\theta
\\&=\int_0^a\int_0^{2\pi}e^{\ii rs\cos(\alpha-\theta)}\overline{\Psi_{nm}^a(s\mathbf{u}_\theta)}sdsd\theta
\\&=2\sqrt{\pi}\frac{\ii^m(-1)^nrz_{nm}J_m'(ar)}{(z_{nm}^2-m^2)^{1/2}(\rho_{nm}^2-r^2)}e^{-\ii m\alpha}
\\&=2\sqrt{\pi}\frac{\ii^m(-1)^n\pi a^{-1}|\mathbf{k}|z_{nm}J_m'(\pi|\mathbf{k}|)}{(z_{nm}^2-m^2)^{1/2}(\rho_{nm}^2-\pi^2a^{-2}|\mathbf{k}|^2)}e^{-\ii m\Phi(\mathbf{k})}
\\&=2\sqrt{\pi}\frac{\ii^m(-1)^n\pi a^{-1}|\mathbf{k}|z_{nm}J_m'(\pi|\mathbf{k}|)}{(z_{nm}^2-m^2)^{1/2}(a^{-2}z_{nm}^2-\pi^2a^{-2}|\mathbf{k}|^2)}e^{-\ii m\Phi(\mathbf{k})}
\\&=2\sqrt{\pi}\frac{\ii^m(-1)^n\pi |\mathbf{k}|z_{nm}J_m'(\pi|\mathbf{k}|)e^{-\ii m\Phi(\mathbf{k})}}{a^{-1}(z_{nm}^2-m^2)^{1/2}(z_{nm}^2-\pi^2|\mathbf{k}|^2)}.
\end{align*}
\end{proof}

Next result presents a closed form for coefficients of functions supported in disks, with respect to derivative boundary condition.
\begin{theorem}\label{TH.C.DBC}
Let $a>0$ and $\xi\in L^2(\mathbb{B}_a^2)$ be a function. Also, let $m\in\mathbb{Z}$ and $n\in\mathbb{N}$. We then have
\begin{equation}\label{C.DBC}
C_{n,m}^a(\xi)=\sum_{\mathbf{k}\in\mathbb{Z}^2}c_a'(\mathbf{k};n,m)\widehat{\xi}\{\mathbf{k}\},
\end{equation}
with 
\begin{equation}\label{C.DBC.alt}
\widehat{\xi}\{\mathbf{k}\}:=\int_0^a\int_0^{2\pi}\xi(r,\theta)e^{-\pi\ii a^{-1}r(k_1\cos\theta+k_2\sin\theta)}rdrd\theta,\hspace{1cm}{\rm for\ all}\ \mathbf{k}=(k_1,k_2)^T\in\mathbb{Z}^2,
\end{equation}
and
\begin{equation}
c_a'(\mathbf{k};n,m):=\pi\sqrt{\pi}\frac{\ii^m(-1)^n|\mathbf{k}|\rho_{nm}J_m'(\pi|\mathbf{k}|)e^{-\ii m\Phi(\mathbf{k})}}{2(z_{nm}^2-m^2)^{1/2}(z_{nm}^2-\pi^2|\mathbf{k}|^2)},\hspace{1cm}{\rm for\ all}\ \mathbf{k}=(k_1,k_2)^T\in\mathbb{Z}^2.
\end{equation}
\end{theorem}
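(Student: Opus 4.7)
The plan is to mirror the proof of Theorem \ref{TH.C.ZBVC} step by step, with the only substantive change being that the key closed-form identity (\ref{main.alt.ZBC}) is replaced by its derivative-boundary-condition analog (\ref{main.alt.DBC}), which has just been established. In particular, the entire opening portion of the proof (the zero-padded extension, the classical Fourier series on the enclosing square, and the identification of Fourier coefficients with $\widehat{\xi}\{\mathbf{k}\}$) goes through verbatim, since none of those steps depends on the boundary condition used to select $\{\rho_{nm}\}$.

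Concretely, first I would let $\Omega_a := [-a,a]^2$, set $u := E(\xi) \in L^2(\Omega_a)$, and note that $u$ is supported in $\mathbb{B}_a^2 \subseteq \Omega_a$ with $\|E(\xi)\|_{L^2(\Omega_a)} = \|\xi\|_{L^2(\mathbb{B}_a^2)}$. The classical Fourier series on the square then gives
\[
u(\mathbf{x}) = \frac{1}{4a^2}\sum_{\mathbf{k}\in\mathbb{Z}^2}\widehat{u}(\mathbf{k})e^{\pi\ii a^{-1}\mathbf{x}^T\mathbf{k}},
\]
and the zero-padding, together with the polar change of variable, identifies $\widehat{u}(\mathbf{k})$ with the quantity $\widehat{\xi}\{\mathbf{k}\}$ defined in (\ref{C.DBC.alt}).

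Next I would substitute this expansion, written in polar coordinates as $\xi(s\mathbf{u}_\theta) = \frac{1}{4a^2}\sum_{\mathbf{k}}\widehat{\xi}\{\mathbf{k}\}e^{\pi\ii a^{-1}s\mathbf{u}_\theta^T\mathbf{k}}$, into the defining integral $C_{n,m}^a(\xi) = \int_0^a\int_0^{2\pi}\xi(s\mathbf{u}_\theta)\overline{\Psi_{nm}^a(s\mathbf{u}_\theta)}sdsd\theta$, and interchange the sum with the integral. Applying (\ref{main.alt.DBC}) to the resulting inner integral and simplifying the factor $\frac{1}{4a^2}\cdot 2a\pi\sqrt{\pi} = \frac{\pi\sqrt{\pi}}{2a}$, together with the relation $z_{nm}/a = \rho_{nm}$ which converts the $z_{nm}$ appearing in (\ref{main.alt.DBC}) into the $\rho_{nm}$ featured in $c_a'(\mathbf{k};n,m)$, produces exactly the claimed coefficient and completes the identity.

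The main obstacle is the justification of the interchange of summation and integration. In the zero-value case this is handled implicitly, and it should be handled the same way here: the Fourier series of $E(\xi)$ converges in $L^2(\Omega_a)$, and since $\overline{\Psi_{nm}^a(s\mathbf{u}_\theta)} s^{1/2}$ is in $L^2(\mathbb{B}_a^2, dsd\theta)$, the Cauchy--Schwarz inequality on the finite-measure disk converts $L^2$ convergence into convergence of the inner products, legitimizing the term-by-term integration. Apart from this standard point, the argument is purely bookkeeping: every remaining step is identical to its counterpart in the proof of Theorem \ref{TH.C.ZBVC}, so no new ideas are required beyond the expansion (\ref{JA.ge.DBC}) and its corollary (\ref{main.alt.DBC}).
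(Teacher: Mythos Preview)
Your proposal is correct and follows essentially the same approach as the paper's own proof: zero-pad $\xi$ to the square $\Omega_a=[-a,a]^2$, expand in the classical Fourier series there, identify $\widehat{u}(\mathbf{k})=\widehat{\xi}\{\mathbf{k}\}$, substitute into the defining integral for $C_{n,m}^a(\xi)$, and apply (\ref{main.alt.DBC}) term by term. Your explicit bookkeeping $\tfrac{1}{4a^2}\cdot 2a\pi\sqrt{\pi}=\tfrac{\pi\sqrt{\pi}}{2a}$ together with $z_{nm}/a=\rho_{nm}$ is exactly what the paper uses implicitly, and your Cauchy--Schwarz justification for the interchange is a welcome addition that the paper leaves unstated.
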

\begin{proof}
Let $a>0$ and $\Omega_a:=[-a,a]^2$. Let $\xi\in L^2(\mathbb{B}_a^2)$ be a function and 
$u:=E(\xi)$ be the canonical extension of $\xi$ to the rectangle $\Omega_a$ by zero-padding. Then $u=E(\xi)$ is supported in $\mathbb{B}_a^2$. Also, we have $u=E(\xi)\in L^2(\Omega_a)$ with 
\[
\|E(\xi)\|_{L^2(\Omega_a)}=\|\xi\|_{L^2(\mathbb{B}_a^2)}.
\]
Hence, using the classical Fourier series of the function $u=E(\xi)$, we can write 
\begin{equation}
u(\mathbf{x})
=\frac{1}{4a^2}\sum_{\mathbf{k}\in\mathbb{Z}^2}\widehat{u}(\mathbf{k})e^{\pi\ii a^{-1}\mathbf{x}^T\mathbf{k}},\hspace{1cm}{\rm for\ all}\ \mathbf{x}=(x_1,x_2)^T\in\Omega_a,
\end{equation}
where for the integral vector 
$\mathbf{k}:=(k_1,k_2)\in\mathbb{Z}^2$, we have
\[
\widehat{u}(\mathbf{k})=\int_{-a}^a\int_{-a}^au(x,y)e^{-\pi\ii a^{-1}(k_1x+k_2y)}dxdy.
\]
Since $u=E(\xi)$ is supported in the disk $\mathbb{B}_a^2$, we have 
\begin{align*}
\widehat{u}(\mathbf{k})
=\widehat{\xi}\{\mathbf{k}\}.
\end{align*}
Let $0<s\le a$ and $0\le\theta\le2\pi$. Therefore, for $\mathbf{x}:=s\mathbf{u}_\theta$, we get 
\begin{align*}
\xi(s\mathbf{u}_\theta)
=\frac{1}{4a^2}\sum_{\mathbf{k}\in\mathbb{Z}^2}\widehat{\xi}\{\mathbf{k}\}e^{\pi\ii a^{-1}s\mathbf{u}_\theta^T\mathbf{k}}.
\end{align*}
Hence, using Equation (\ref{main.alt.DBC}), we achieve 
\begin{align*}
C_{n,m}^a(\xi)&=\int_0^a\int_0^{2\pi}\xi(s\mathbf{u}_\theta)\overline{\Psi_{nm}^a(s\mathbf{u}_\theta)}sdsd\theta
\\&=\int_0^a\int_0^{2\pi}\left(\frac{1}{4a^2}\sum_{\mathbf{k}\in\mathbb{Z}^2}\widehat{\xi}\{\mathbf{k}\}e^{\pi\ii a^{-1}s\mathbf{u}_\theta^T\mathbf{k}}\right)\overline{\Psi_{nm}^a(s\mathbf{u}_\theta)}sds d\theta
\\&=\frac{1}{4a^2}\sum_{\mathbf{k}\in\mathbb{Z}^2}\widehat{\xi}\{\mathbf{k}\}\left(\int_0^a\int_0^{2\pi}e^{\pi\ii a^{-1}s\mathbf{u}_\theta^T\mathbf{k}}\overline{\Psi_{nm}^a(s\mathbf{u}_\theta)}sdsd\theta\right)
=\sum_{\mathbf{k}\in\mathbb{Z}^2}c_a'(\mathbf{k};n,m)\widehat{\xi}\{\mathbf{k}\}.
\end{align*}
\end{proof}

\begin{corollary}
{\it Let $a>0$ and $\xi\in L^2(\mathbb{B}_a^2)$ be a function. We then have 
\[
\xi(r,\theta)=\sum_{m=-\infty}^{+\infty}\sum_{n=1}^\infty C_{n,m}^a(\xi)\Psi_{nm}^a(r,\theta),\hspace{0.5cm}{\rm for\ a.e.}\ 0\le r\le a,\ 0\le \theta\le 2\pi.
\]
In particular, if $\xi:\mathbb{B}_a^2\to\mathbb{C}$ is continuous, we have 
\[
\xi(r,\theta)=\sum_{m=-\infty}^{+\infty}\sum_{n=1}^\infty C_{n,m}^a(\xi)\Psi_{nm}^a(r,\theta),\hspace{0.5cm}{\rm for\ all}\ 0\le r\le a,\ 0\le \theta\le 2\pi.
\]
}\end{corollary}

\begin{remark}
The equation (\ref{C.DBC}) guarantees that the coefficients of functions supported in disks with respect to derivative boundary condition, can be computed from the standard Fourier coefficients $\widehat{\xi}\{\mathbf{k}\}$, which can be implemented by FFT.
\end{remark}

Next result gives an explicit closed form for coefficients of zero-padded functions on disks.

\begin{theorem}\label{g.zp.DBC}
{\it Let $a>0$ and $f:\mathbb{R}^2\to\mathbb{C}$ be a square-integrable function 
on the disk $\mathbb{B}_a^2$. We then have
\[
f(r,\theta)=\sum_{m=-\infty}^{+\infty}\sum_{n=1}^\infty C_{n,m}^a[f]\Psi_{nm}^a(r,\theta),\hspace{0.5cm}{\rm for\ a.e.}\ 0\le r\le a,\ 0\le \theta\le 2\pi,
\]
where
\begin{equation}\label{CnmZf.DBC}
C_{n,m}^a[f]:=\sum_{\mathbf{k}\in\mathbb{Z}^2}c_a'(\mathbf{k};n,m)\widehat{f}[a^{-1}\mathbf{k};a],
\hspace{1cm}{\rm for\ all}\ m\in\mathbb{Z},\ n\in\mathbb{N},
\end{equation}
with 
\begin{equation}
\widehat{f}[\bom;a]:=\int_0^a\int_0^{2\pi}f(r,\theta)e^{-\pi\ii (\omega_1\cos\theta+\omega_2\sin\theta)}rdrd\theta,\hspace{1cm}{\rm for\ all}\ \bom=(\omega_1,\omega_2)^T\in\mathbb{R}^2.
\end{equation}
}\end{theorem}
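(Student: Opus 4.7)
The plan is to mirror the proof of Theorem \ref{g.zp.ZBVC} verbatim, since the only difference between the two statements is that the coefficients $c_a(\mathbf{k};n,m)$ coming from the zero-valued boundary condition have been replaced throughout by the coefficients $c_a'(\mathbf{k};n,m)$ coming from the derivative boundary condition, and these are handled by Theorem \ref{TH.C.DBC} in exactly the same way that Theorem \ref{TH.C.ZBVC} handled the earlier case.

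Concretely, I would start by defining $\xi := R(f)$, the restriction of $f$ to $\mathbb{B}_a^2$. Since $f$ is square-integrable on $\mathbb{B}_a^2$ by hypothesis, $\xi \in L^2(\mathbb{B}_a^2)$. Invoking the corollary to Theorem \ref{TH.C.DBC}, we obtain the a.e.\ expansion
\[
\xi = \sum_{m=-\infty}^{+\infty}\sum_{n=1}^{\infty} C_{n,m}^a(\xi)\,\Psi_{nm}^a,
\]
and since $f(r,\theta) = \xi(r,\theta)$ for a.e.\ $(r,\theta)$ with $0\le r\le a$, the same expansion holds for $f$ after setting $C_{n,m}^a[f] := C_{n,m}^a(\xi)$.

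Next, I would apply Theorem \ref{TH.C.DBC} directly to $\xi$, which gives
\[
C_{n,m}^a[f] = C_{n,m}^a(\xi) = \sum_{\mathbf{k}\in\mathbb{Z}^2} c_a'(\mathbf{k};n,m)\,\widehat{\xi}\{\mathbf{k}\}.
\]
It then remains to identify $\widehat{\xi}\{\mathbf{k}\}$ with $\widehat{f}[a^{-1}\mathbf{k};a]$. Using the definition of $\widehat{\xi}\{\mathbf{k}\}$ from \eqref{C.DBC.alt}, and the fact that on the domain of integration $\xi(r,\theta) = R(f)(r,\theta) = f(r,\theta)$, one reads off
\[
\widehat{\xi}\{\mathbf{k}\} = \int_0^a\!\!\int_0^{2\pi} f(r,\theta)\,e^{-\pi\ii a^{-1} r(k_1\cos\theta+k_2\sin\theta)}\,rdrd\theta = \widehat{f}[a^{-1}\mathbf{k};a],
\]
exactly as in the corresponding step of the proof of Theorem \ref{g.zp.ZBVC}. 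Substituting this identification into the preceding formula yields the claimed expression for $C_{n,m}^a[f]$.

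Since every ingredient — the $L^2$ expansion of $\xi$, the Fourier-series based identity for $C_{n,m}^a(\xi)$, and the restriction-to-extension identification of $\widehat{\xi}\{\mathbf{k}\}$ — is already available in the derivative-boundary-condition setting via Theorem \ref{TH.C.DBC}, there is no real obstacle; the argument is a direct transcription of the proof of Theorem \ref{g.zp.ZBVC} with $c_a(\cdot;n,m)$ replaced by $c_a'(\cdot;n,m)$. The only point meriting a brief check is that Theorem \ref{TH.C.DBC} is indeed stated for arbitrary $\xi\in L^2(\mathbb{B}_a^2)$ (which it is), so that applying it to $R(f)$ is legitimate without any further regularity hypothesis on $f$ beyond square-integrability on the disk.
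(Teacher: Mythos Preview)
Your proposal is correct and follows essentially the same approach as the paper's own proof: restrict $f$ to $\xi:=R(f)\in L^2(\mathbb{B}_a^2)$, invoke the orthonormal expansion (\ref{g.zp.DBC.alt0}), apply Theorem~\ref{TH.C.DBC} to rewrite $C_{n,m}^a(\xi)$ in terms of $\widehat{\xi}\{\mathbf{k}\}$, and then identify $\widehat{\xi}\{\mathbf{k}\}=\widehat{f}[a^{-1}\mathbf{k};a]$. The paper's argument is, as you anticipated, a direct transcription of the proof of Theorem~\ref{g.zp.ZBVC} with $c_a$ replaced by $c_a'$.
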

\begin{proof}
Let $a>0$ and $f:\mathbb{R}^2\to\mathbb{C}$ be a function such that 
restriction of $f$ to the disk $\mathbb{B}_a^2$ is square-integrable on 
$\mathbb{B}_a^2$. Let $\xi:=R(f)$
be the restriction of $f$ to the disk $\mathbb{B}_a^2$.
We then have $\xi\in L^2(\mathbb{B}_a^2)$. Hence, we have 
\begin{equation}\label{g.zp.DBC.alt0}
\xi=\sum_{m=-\infty}^{+\infty}\sum_{n=1}^\infty C_{n,m}^a(\xi)\Psi_{nm}^a.
\end{equation}
Suppose $m\in\mathbb{Z}$ and $n\in\mathbb{N}$. Let $C_{n,m}[f]:=C_{n,m}(\xi)$. 
Hence, using Equation (\ref{C.DBC}) for the function $\xi\in L^2(\mathbb{B}_a^2)$, we can write 
\begin{align*}
C_{n,m}^a[f]
=\sum_{\mathbf{k}\in\mathbb{Z}^2}c_a'(\mathbf{k};n,m)\widehat{\xi}\{\mathbf{k}\}.
\end{align*}
Let $\mathbf{k}:=(k_1,k_2)^T\in\mathbb{Z}^2$ be given. Invoking Equation (\ref{C.DBC.alt}), we get 
\begin{align*}
\widehat{\xi}\{\mathbf{k}\}=\widehat{f}[a^{-1}\mathbf{k};a],
\end{align*}
which implies that 
\begin{equation}\label{g.zp.DBC.alt00}
C_{n,m}^a[f]
=\sum_{\mathbf{k}\in\mathbb{Z}^2}c_a'(\mathbf{k};n,m)\widehat{f}[a^{-1}\mathbf{k};a].
\end{equation}
Applying Equation (\ref{g.zp.DBC.alt00}) in (\ref{g.zp.DBC.alt0}) completes the proof.
\end{proof}

\begin{corollary}
{\it Let $a>0$ and $f\in L^2(\mathbb{R}^2)$ be a function. We then have 
\[
f(r,\theta)=\sum_{m=-\infty}^{+\infty}\sum_{n=1}^\infty C_{n,m}^a[f]\Psi_{nm}^a(r,\theta),\hspace{0.5cm}{\rm for\ a.e.}\ 0\le r\le a,\ 0\le \theta\le 2\pi,
\]
}\end{corollary}
\begin{proof}
Let $a>0$ and $f\in L^2(\mathbb{R}^2)$ be a function. Let $\xi:=R(f)$
be the restriction of $f$ to the disk $\mathbb{B}_a^2$.
We then have $\xi\in L^2(\mathbb{B}_a^2)$ with $\|\xi\|_{L^2(\mathbb{B}_a^2)}\le\|f\|_{L^2(\mathbb{R}^2)}$. 
Thus, $f$ is square-integrable on $\mathbb{B}_a^2$. Applying Theorem \ref{g.zp.DBC}, for the function $f$, completes the proof. 
\end{proof}

We then conclude the following results concerning continuous functions. 

\begin{proposition}\label{gr.DBC.pw}
{\it Let $a>0$ and $f:\mathbb{R}^2\to\mathbb{C}$ be a continuous function. 
We then have
\[
f(r,\theta)=\sum_{m=-\infty}^{+\infty}\sum_{n=1}^\infty C_{n,m}^a[f]\Psi_{nm}^a(r,\theta),\hspace{0.5cm}{\rm for\ all}\ 0\le r\le a,\ 0\le \theta\le 2\pi.
\]
}\end{proposition}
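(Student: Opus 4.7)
The proof proposal follows the same two-step pattern used for Proposition \ref{gr.ZBVC.pw}, simply replacing the zero-value boundary condition by the derivative boundary condition. Since the statement asks for pointwise convergence of a series constructed from $L^2$-coefficients, the plan is to first establish the almost-everywhere equality from a square-integrability argument, and then upgrade to genuine pointwise equality using continuity.

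First I would observe that because $\mathbb{B}_a^2$ is compact in $\mathbb{R}^2$, it has finite Lebesgue measure, so any continuous function $f:\mathbb{R}^2\to\mathbb{C}$ is automatically bounded on $\mathbb{B}_a^2$ and hence square-integrable there: $\int_{\mathbb{B}_a^2}|f(\mathbf{x})|^2\,d\mathbf{x}\le\pi a^2\|f\|_{L^\infty(\mathbb{B}_a^2)}^2<\infty$. This verifies the hypothesis of Theorem \ref{g.zp.DBC}, which I may now apply directly to $f$.

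Theorem \ref{g.zp.DBC} then yields the Fourier-Bessel expansion with respect to the derivative boundary condition:
\[
f(r,\theta)=\sum_{m=-\infty}^{+\infty}\sum_{n=1}^\infty C_{n,m}^a[f]\Psi_{nm}^a(r,\theta),\qquad \text{for a.e. } 0\le r\le a,\ 0\le\theta\le 2\pi,
\]
with the coefficients $C_{n,m}^a[f]$ expressed via $c_a'(\mathbf{k};n,m)$ as in (\ref{CnmZf.DBC}). At this stage the identity holds only almost everywhere.

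To conclude, I would upgrade this equality to hold at every point of the closed disk. The left-hand side $f$ is continuous on $\mathbb{R}^2$ by hypothesis. The right-hand side, being the Fourier-Bessel series of an $L^2$-function on $\mathbb{B}_a^2$ with respect to the orthonormal system $\{\Psi_{nm}^a\}$ under derivative boundary condition, converges to a well-defined measurable function which agrees with $f$ a.e. Since two continuous functions that agree a.e. on $\mathbb{B}_a^2$ agree everywhere on $\mathbb{B}_a^2$, and since the expansion already matches $f$ pointwise a.e., the pointwise identity extends to the entire closed disk $\{(r,\theta): 0\le r\le a,\ 0\le\theta\le 2\pi\}$. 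The only mild subtlety to guard against is that the partial sums of a generic Fourier-Bessel expansion need not converge pointwise; however, exactly as in the zero-value case, the argument avoids this issue by interpreting the series as already defining a function (namely $f$ itself, up to a null set) and invoking continuity to remove the null-set ambiguity. No truly hard step arises: the proof is a verbatim adaptation of Proposition \ref{gr.ZBVC.pw} with Theorem \ref{g.zp.ZBVC} replaced by Theorem \ref{g.zp.DBC}.
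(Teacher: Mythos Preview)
Your proposal is correct and follows essentially the same approach as the paper: the paper leaves Proposition~\ref{gr.DBC.pw} without an explicit proof, but its proof of the parallel Proposition~\ref{gr.ZBVC.pw} is exactly the argument you outline --- compactness of $\mathbb{B}_a^2$ gives square-integrability, Theorem~\ref{g.zp.DBC} (in place of Theorem~\ref{g.zp.ZBVC}) yields the a.e.\ expansion, and continuity upgrades this to pointwise equality. Your discussion of the pointwise-convergence subtlety is in fact slightly more careful than the paper's one-line ``continuity of $f$ implies the point-wise convergence of the series.''
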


\begin{corollary}
{\it Let $a>0$ and $f:\mathbb{R}^2\to\mathbb{C}$ be a continuous function supported in 
$\mathbb{B}_a^2$. We then have 
\[
f(r,\theta)=\sum_{m=-\infty}^{+\infty}\sum_{n=1}^\infty C_{n,m}^a[f]\Psi_{nm}^a(r,\theta),\hspace{0.5cm}{\rm for\ all}\ 0\le r\le a,\ 0\le \theta\le 2\pi,
\]
where
\begin{equation}
C_{n,m}^a[f]:=\sum_{\mathbf{k}\in\mathbb{Z}^2}c_a'(\mathbf{k};n,m)\widehat{f}(a^{-1}\mathbf{k}),
\hspace{1cm}{\rm for\ all}\ m\in\mathbb{Z},\ n\in\mathbb{N}.
\end{equation}
}\end{corollary}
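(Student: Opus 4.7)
The plan is to combine Proposition \ref{gr.DBC.pw}, which handles continuous functions on $\mathbb{R}^2$ in general, with the support hypothesis to replace the windowed Fourier quantity $\widehat{f}[a^{-1}\mathbf{k};a]$ by the honest Fourier transform $\widehat{f}(a^{-1}\mathbf{k})$. This is the same reduction already used in the corresponding zero-value boundary condition corollary following Theorem \ref{g.zp.ZBVC}, so the argument is completely parallel; only the formula for $c_a'(\mathbf{k};n,m)$ changes.

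First I would invoke Proposition \ref{gr.DBC.pw} directly: since $f:\mathbb{R}^2\to\mathbb{C}$ is continuous, it is in particular continuous on $\mathbb{B}_a^2$, so
\[
f(r,\theta)=\sum_{m=-\infty}^{+\infty}\sum_{n=1}^\infty C_{n,m}^a[f]\,\Psi_{nm}^a(r,\theta)
\]
pointwise for $0\le r\le a$, $0\le\theta\le 2\pi$, where by definition
\[
C_{n,m}^a[f]=\sum_{\mathbf{k}\in\mathbb{Z}^2}c_a'(\mathbf{k};n,m)\,\widehat{f}[a^{-1}\mathbf{k};a].
\]
This already gives the expansion; it only remains to rewrite the coefficients $\widehat{f}[a^{-1}\mathbf{k};a]$ in terms of $\widehat{f}(a^{-1}\mathbf{k})$.

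Next, fix $\mathbf{k}=(k_1,k_2)^T\in\mathbb{Z}^2$. Since $f$ is supported in $\mathbb{B}_a^2$, passing to polar coordinates and using that the integrand outside $\mathbb{B}_a^2$ vanishes yields
\begin{align*}
\widehat{f}[a^{-1}\mathbf{k};a]
&=\int_0^a\int_0^{2\pi}f(r,\theta)e^{-\pi\ii a^{-1}r(k_1\cos\theta+k_2\sin\theta)}\,r\,dr\,d\theta
\\&=\int_{\mathbb{B}_a^2}f(\mathbf{x})e^{-\pi\ii a^{-1}\mathbf{x}^T\mathbf{k}}\,d\mathbf{x}
=\int_{\mathbb{R}^2}f(\mathbf{x})e^{-\pi\ii a^{-1}\mathbf{x}^T\mathbf{k}}\,d\mathbf{x}
=\widehat{f}(a^{-1}\mathbf{k}),
\end{align*}
where the last equality uses the definition of the Fourier transform given in Section~2. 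Substituting this identity into the expression for $C_{n,m}^a[f]$ gives the claimed formula.

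There is no substantive obstacle here: Proposition \ref{gr.DBC.pw} supplies the pointwise convergent series, and the support assumption is used in a single line to identify the windowed integral with the genuine Fourier transform evaluated on the lattice $a^{-1}\mathbb{Z}^2$. The only thing to be careful about is that $f$ is assumed continuous on all of $\mathbb{R}^2$ (not merely on $\mathbb{B}_a^2$), so that Proposition \ref{gr.DBC.pw} applies verbatim; continuity together with compact support also guarantees $f\in L^1(\mathbb{R}^2)$ so $\widehat{f}(a^{-1}\mathbf{k})$ is well defined.
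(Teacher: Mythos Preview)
Your proposal is correct and follows essentially the same approach as the paper: invoke Proposition \ref{gr.DBC.pw} for the pointwise expansion, then use the support hypothesis to identify $\widehat{f}[a^{-1}\mathbf{k};a]$ with $\widehat{f}(a^{-1}\mathbf{k})$. The paper's proof is terser but structurally identical.
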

\begin{proof}
Let $a>0$ and $f:\mathbb{R}^2\to\mathbb{C}$ be a continuous function supported in 
$\mathbb{B}_a^2$. Since $f$ is continuous, using Proposition \ref{gr.DBC.pw}, conclude that 
\[
f(r,\theta)=\sum_{m=-\infty}^{+\infty}\sum_{n=1}^\infty C_{n,m}^a[f]\Psi_{nm}^a(r,\theta),\hspace{0.5cm}{\rm for\ all}\ 0\le r\le a,\ 0\le \theta\le 2\pi,
\]
where
\begin{equation}
C_{n,m}^a[f]:=\sum_{\mathbf{k}\in\mathbb{Z}^2}c_a'(\mathbf{k};n,m)\widehat{f}[a^{-1}\mathbf{k};a],
\hspace{1cm}{\rm for\ all}\ m\in\mathbb{Z},\ n\in\mathbb{N}.
\end{equation}
Let $\mathbf{k}:=(k_1,k_2)^T\in\mathbb{Z}^2$ be an integral vector. 
Because $f$ is supported in disk $\mathbb{B}_a^2$, we can write 
\begin{align*}
\widehat{f}[a^{-1}\mathbf{k};a]=\widehat{f}(a^{-1}\mathbf{k}),
\end{align*}
which completes the proof. 
\end{proof}

We then present the following polarized version of Theorem \ref{TH.C.DBC}.

\begin{theorem}
Let $a>0$ with $\xi\in L^2(\mathbb{B}_a^2)$ be a function.
Also, let $m\in\mathbb{Z}$ and $n\in\mathbb{N}$. We then have 
\begin{equation}\label{C.DBC.Polar}
C_{n,m}^a(\xi)=\sum_{\tau\in\mathcal{R}}\sum_{\alpha\in\Phi_\tau}B_{mn}^a(\tau,\alpha)\widehat{\xi}\{\tau\mathbf{u}_\alpha\},
\end{equation}
where 
\begin{equation}
B_{mn}^a(\tau,\alpha):=\pi\sqrt{\pi}\frac{\ii^m(-1)^n\tau \rho_{nm}J_m'(\pi\tau)e^{-\ii m\alpha}}{2(z_{nm}^2-m^2)^{1/2}(z_{nm}^2-\pi^2\tau^2)}.
\end{equation}
\end{theorem}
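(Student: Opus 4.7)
The plan is to reduce the claim directly to Theorem \ref{TH.C.DBC}, mirroring the derivation of the polarized formula (\ref{C.ZBVC.Polar}) in the zero-value boundary condition setting. First, I would invoke Theorem \ref{TH.C.DBC} to write
\[
C_{n,m}^a(\xi)=\sum_{\mathbf{k}\in\mathbb{Z}^2}c_a'(\mathbf{k};n,m)\widehat{\xi}\{\mathbf{k}\},
\]
which is valid for any $\xi\in L^2(\mathbb{B}_a^2)$.

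Next I would use the partition
\[
\mathbb{Z}^2=\bigcup_{\tau\in\mathcal{R}}\{\tau\mathbf{u}_\alpha:\alpha\in\Phi_\tau\}
\]
established in the earlier proposition to reindex the lattice sum as a double sum over $\tau\in\mathcal{R}$ and $\alpha\in\Phi_\tau$. For each such $\mathbf{k}=\tau\mathbf{u}_\alpha$ we have $|\mathbf{k}|=\tau$ and $\Phi(\mathbf{k})=\alpha$, so substituting these identities into the explicit formula
\[
c_a'(\mathbf{k};n,m)=\pi\sqrt{\pi}\,\frac{\ii^m(-1)^n|\mathbf{k}|\rho_{nm}J_m'(\pi|\mathbf{k}|)e^{-\ii m\Phi(\mathbf{k})}}{2(z_{nm}^2-m^2)^{1/2}(z_{nm}^2-\pi^2|\mathbf{k}|^2)}
\]
yields exactly $B_{mn}^a(\tau,\alpha)$ as defined in the statement.

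Assembling the pieces gives
\[
C_{n,m}^a(\xi)=\sum_{\tau\in\mathcal{R}}\sum_{\alpha\in\Phi_\tau}c_a'(\tau\mathbf{u}_\alpha;n,m)\widehat{\xi}\{\tau\mathbf{u}_\alpha\}=\sum_{\tau\in\mathcal{R}}\sum_{\alpha\in\Phi_\tau}B_{mn}^a(\tau,\alpha)\widehat{\xi}\{\tau\mathbf{u}_\alpha\},
\]
which is the claim. Essentially no work beyond the bookkeeping is required: the discreteness of $\mathcal{R}$ and the finiteness of each $\Phi_\tau$ (shown in the earlier proposition) ensure the reindexing is a genuine rearrangement, not a limit exchange, so there is no convergence subtlety to address. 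The only mild point to be careful about is that the original series in Theorem \ref{TH.C.DBC} is an unordered sum over $\mathbb{Z}^2$ — this is not an obstacle because each $\mathbf{k}\in\mathbb{Z}^2$ corresponds to exactly one pair $(\tau,\alpha)$ with $\tau=|\mathbf{k}|$ and $\alpha=\Phi(\mathbf{k})$, so the regrouping is a bijection. Hence the main (and essentially only) obstacle is verifying that the substitution $|\mathbf{k}|\mapsto\tau$, $\Phi(\mathbf{k})\mapsto\alpha$ into $c_a'(\mathbf{k};n,m)$ gives precisely $B_{mn}^a(\tau,\alpha)$, which is a direct symbolic check.
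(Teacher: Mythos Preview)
Your proposal is correct and follows essentially the same route as the paper: invoke Theorem~\ref{TH.C.DBC}, use the partition $\mathbb{Z}^2=\bigcup_{\tau\in\mathcal{R}}\{\tau\mathbf{u}_\alpha:\alpha\in\Phi_\tau\}$ to reindex the sum, and check that $c_a'(\tau\mathbf{u}_\alpha;n,m)=B_{mn}^a(\tau,\alpha)$ by substituting $|\mathbf{k}|=\tau$ and $\Phi(\mathbf{k})=\alpha$. Your remarks on the bijectivity of the reindexing are more explicit than the paper's, but the argument is the same.
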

\begin{proof}
Let $m\in\mathbb{Z}$ and $n\in\mathbb{N}$. First, suppose that $\tau\in\mathcal{R}$ and $\alpha\in\Phi_\tau$. Let $\mathbf{k}:=\tau\mathbf{u}_{\alpha}=(\tau\cos\alpha,\tau\sin\alpha)^T\in\mathbb{Z}^2$. Thus, $|\mathbf{k}|=\tau$ and $\Phi(\mathbf{k})=\alpha$. We then have 
\begin{align*}
c_a'(\tau\mathbf{u}_\alpha;n,m)
&=c_a'(\mathbf{k};n,m)
\\&=\pi\sqrt{\pi}\frac{\ii^m(-1)^n|\mathbf{k}|\rho_{nm}J_m'(\pi|\mathbf{k}|)e^{-\ii m\Phi(\mathbf{k})}}{2(z_{nm}^2-m^2)^{1/2}(z_{nm}^2-\pi^2|\mathbf{k}|^2)}
=\pi\sqrt{\pi}\frac{\ii^m(-1)^n\tau \rho_{nm}J_m'(\pi\tau)e^{-\ii m\alpha}}{2(z_{nm}^2-m^2)^{1/2}(z_{nm}^2-\pi^2\tau^2)}.
\end{align*}
Therefore, using Equation (\ref{C.DBC}), we get 
\begin{align*}
C_{n,m}^a(\xi)&=\sum_{\mathbf{k}\in\mathbb{Z}^2}c_a'(\mathbf{k};n,m)\widehat{\xi}\{\mathbf{k}\}\\&=\sum_{\tau\in\mathcal{R}}\sum_{\alpha\in\Phi_\tau}c_a'(\tau\mathbf{u}_\alpha;n,m)\widehat{\xi}\{\tau\mathbf{u}_\alpha\}
=\sum_{\tau\in\mathcal{R}}\sum_{\alpha\in\Phi_\tau}B_{mn}^a(\tau,\alpha)\widehat{\xi}\{\tau\mathbf{u}_\alpha\}.
\end{align*}
\end{proof}

\begin{corollary}
{\it Let $a>0$ and $\xi:\mathbb{B}_a^2\to\mathbb{C}$ be a continuous function. We then have
\[
\xi(r,\theta)=\sum_{m=-\infty}^{+\infty}\sum_{n=1}^\infty C_{n,m}^a(\xi)\Psi_{nm}^a(r,\theta),\hspace{0.5cm}{\rm for\ all}\ 0\le r\le a,\ 0\le \theta\le 2\pi
\]
where
\begin{equation}
C_{n,m}^a(\xi)=\sum_{\tau\in\mathcal{R}}\sum_{\alpha\in\Phi_\tau}B_{mn}^a(\tau,\alpha)\widehat{\xi}\{\tau\mathbf{u}_\alpha\}.
\end{equation}
}\end{corollary}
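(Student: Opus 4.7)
The plan is to assemble this corollary from two ingredients already obtained: the preceding polarized theorem that expresses the Fourier-Bessel coefficients for $L^2$ functions via the lattice sum against $\widehat{\xi}\{\tau\mathbf{u}_\alpha\}$, and the Fourier-Bessel reconstruction corollary attached to Theorem \ref{TH.C.DBC}. The only new content is promoting almost-everywhere convergence to pointwise convergence via continuity, mirroring the argument used in Proposition \ref{gr.DBC.pw} and Proposition \ref{gr.ZBVC.pw}.

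First I would observe that since $\mathbb{B}_a^2$ is compact and $\xi:\mathbb{B}_a^2\to\mathbb{C}$ is continuous, $\xi$ is bounded, so
\[
\|\xi\|_{L^2(\mathbb{B}_a^2)}^2=\int_{\mathbb{B}_a^2}|\xi(\mathbf{x})|^2\,d\mathbf{x}\le \pi a^2\|\xi\|_\infty^2<\infty,
\]
which gives $\xi\in L^2(\mathbb{B}_a^2)$. This qualifies $\xi$ for the hypothesis of the immediately preceding polarized theorem, from which the coefficient identity
\[
C_{n,m}^a(\xi)=\sum_{\tau\in\mathcal{R}}\sum_{\alpha\in\Phi_\tau}B_{mn}^a(\tau,\alpha)\widehat{\xi}\{\tau\mathbf{u}_\alpha\}
\]
is inherited verbatim, so this part requires no further computation.

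Next I would apply the $L^2$ corollary attached to Theorem \ref{TH.C.DBC} to obtain the Fourier-Bessel expansion
\[
\xi(r,\theta)=\sum_{m=-\infty}^{+\infty}\sum_{n=1}^{\infty}C_{n,m}^a(\xi)\,\Psi_{nm}^a(r,\theta)
\]
for a.e.\ $(r,\theta)\in[0,a]\times[0,2\pi]$. To upgrade this from a.e.\ to everywhere, I would invoke continuity of $\xi$ exactly as in Proposition \ref{gr.DBC.pw}: since both sides are continuous functions on $[0,a]\times[0,2\pi]$ (the right side being a Fourier-Bessel series of a continuous function) and they agree a.e., they agree pointwise on the closed disk. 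Substituting the polarized formula for $C_{n,m}^a(\xi)$ into this expansion finishes the proof.

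The main obstacle is the a.e.-to-everywhere upgrade, since $L^2$ reconstruction alone yields only a.e. equality and genuine pointwise convergence of Fourier-Bessel series requires more. The paper handles this routinely by invoking continuity, and I would follow the same convention used in Propositions \ref{gr.ZBVC.pw} and \ref{gr.DBC.pw}, rather than developing a fresh Dini-type argument. Beyond this, the corollary is a direct concatenation of results already in hand.
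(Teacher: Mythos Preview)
Your proposal is correct and matches the paper's intended approach: the corollary is stated without proof in the paper, as it follows immediately by combining the preceding polarized theorem (giving the coefficient formula) with the earlier corollary to Theorem~\ref{TH.C.DBC} (whose ``In particular'' clause already asserts pointwise equality for continuous $\xi$). Your route through the a.e.\ statement plus the continuity upgrade of Proposition~\ref{gr.DBC.pw} is equivalent and equally valid.
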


Next result gives a polarized version for explicit closed form of coefficients for zero-padded functions.

\begin{proposition}
{\it Let $a>0$ and $f:\mathbb{R}^2\to\mathbb{C}$ be a square-integrable function 
on the disk $\mathbb{B}_a^2$. We then have
\[
f(r,\theta)=\sum_{m=-\infty}^{+\infty}\sum_{n=1}^\infty C_{n,m}^a[f]\Psi_{nm}^a(r,\theta),
\hspace{0.5cm}{\rm for\ a.e.}\ 0\le r\le a,\ 0\le \theta\le 2\pi,\]
with
\begin{equation}
C_{n,m}^a[f]=\sum_{\tau\in\mathcal{R}}\sum_{\alpha\in\Phi_\tau}B_{mn}^a(\tau,\alpha)\widehat{f}[a^{-1}\tau\mathbf{u}_\alpha;a].
\end{equation}
}\end{proposition}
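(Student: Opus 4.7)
The plan is to deduce this proposition by combining Theorem \ref{g.zp.DBC} with the polar reindexing of $\mathbb{Z}^2$ established earlier in the section, exactly as was done in the zero-valued boundary case (with the coefficients $c_a'$ and $B_{mn}^a$ replacing $c_a$ and $A_{mn}^a$).

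First I would invoke Theorem \ref{g.zp.DBC} applied to $f$: since $f$ is square-integrable on $\mathbb{B}_a^2$, the Fourier-Bessel expansion holds for almost every $(r,\theta)\in[0,a]\times[0,2\pi]$ with coefficients
\[
C_{n,m}^a[f]=\sum_{\mathbf{k}\in\mathbb{Z}^2}c_a'(\mathbf{k};n,m)\widehat{f}[a^{-1}\mathbf{k};a].
\]
This already gives the expansion statement; the only remaining task is to rewrite the sum over $\mathbf{k}\in\mathbb{Z}^2$ as the double sum over $\tau\in\mathcal{R}$ and $\alpha\in\Phi_\tau$.

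Next, I would use the decomposition $\mathbb{Z}^2=\bigcup_{\tau\in\mathcal{R}}\{\tau\mathbf{u}_\alpha:\alpha\in\Phi_\tau\}$ established in Proposition 4.5(4). For each $\mathbf{k}=\tau\mathbf{u}_\alpha$ we have $|\mathbf{k}|=\tau$ and $\Phi(\mathbf{k})=\alpha$, so substituting into the definition of $c_a'(\mathbf{k};n,m)$ yields
\[
c_a'(\tau\mathbf{u}_\alpha;n,m)=\pi\sqrt{\pi}\frac{\ii^m(-1)^n\tau\rho_{nm}J_m'(\pi\tau)e^{-\ii m\alpha}}{2(z_{nm}^2-m^2)^{1/2}(z_{nm}^2-\pi^2\tau^2)}=B_{mn}^a(\tau,\alpha).
\]
Reindexing and using absolute convergence (which is the same justification as in the zero-value case, inherited from Theorem \ref{TH.C.DBC}), I obtain
\[
C_{n,m}^a[f]=\sum_{\tau\in\mathcal{R}}\sum_{\alpha\in\Phi_\tau}B_{mn}^a(\tau,\alpha)\widehat{f}[a^{-1}\tau\mathbf{u}_\alpha;a],
\]
which is the claimed identity.

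The argument is essentially bookkeeping since all analytic content has been packaged into Theorem \ref{g.zp.DBC} and the decomposition of $\mathbb{Z}^2$; no obstacle of substance arises. The only subtlety to flag is the rearrangement of the series, which is legitimate because the double sum is just a relabeling of the original sum over $\mathbb{Z}^2$ by pulling together the contributions of lattice points lying on the same circle of radius $\tau$; since each fiber $\{\tau\mathbf{u}_\alpha:\alpha\in\Phi_\tau\}$ is finite (Proposition 4.5(3)), no convergence issue is introduced beyond what Theorem \ref{g.zp.DBC} already provides.
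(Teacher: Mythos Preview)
Your proposal is correct and matches the paper's implicit approach: the paper states this proposition without proof, but it is the direct combination of Theorem~\ref{g.zp.DBC} (giving $C_{n,m}^a[f]=\sum_{\mathbf{k}\in\mathbb{Z}^2}c_a'(\mathbf{k};n,m)\widehat{f}[a^{-1}\mathbf{k};a]$) with the polar reindexing already carried out in the proof of the polarized Theorem (Equation~(\ref{C.DBC.Polar})), where $c_a'(\tau\mathbf{u}_\alpha;n,m)=B_{mn}^a(\tau,\alpha)$ is verified. Your bookkeeping argument and your remark about finite fibers handling the rearrangement are exactly what is needed.
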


\begin{corollary}
{\it Let $a>0$ and $f:\mathbb{R}^2\to\mathbb{C}$ be a continuous function. 
We then have
\[
f(r,\theta)=\sum_{m=-\infty}^{+\infty}\sum_{n=1}^\infty C_{n,m}^a[f]\Psi_{nm}^a(r,\theta),
\hspace{0.5cm}{\rm for\ all}\ 0\le r\le a,\ 0\le \theta\le 2\pi,
\]
}\end{corollary}

\subsection{Convolution on disks using derivative boundary condition}
We then continue by investigating analytical aspects of approximations for convolution of functions supported in disks, with respect to derivative boundary condition.

The following theorem introduces a constructive method for computing the coefficients of zero-padded convolutions of functions on disks, with respect to derivative boundary condition.

\begin{theorem}
Let $a>0$ and $b:=a/2$. Suppose $f_j:\mathbb{R}^2\to\mathbb{C}$ with $j\in\{1,2\}$ are  
square integrable functions on $\mathbb{B}_b^2$. Let $m\in\mathbb{Z}$ and $n\in\mathbb{N}$. We then have
\begin{equation}\label{conv.DBC}
C_{n,m}^a[f_1\circledast f_2]=\sum_{\mathbf{k}\in\mathbb{Z}^2}c_a'(\mathbf{k};n,m)\widehat{f_1}[a^{-1}\mathbf{k};b]\widehat{f_2}[a^{-1}\mathbf{k};b],
\end{equation}
where
\begin{equation}\label{zp.hat.DBC}
\widehat{f_j}[\bom;b]:=\int_0^b\int_0^{2\pi}f_j(r,\theta)e^{-\pi\ii r(\omega_1\cos\theta+\omega_2\sin\theta)}rdrd\theta,\hspace{1cm}{\rm for\ all}\ \bom:=(\omega_1,\omega_2)^T\in\mathbb{R}^2.
\end{equation}
\end{theorem}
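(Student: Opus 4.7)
The plan is to replay the proof of the corresponding zero-valued boundary condition result (Equation \ref{conv.ZBC}) essentially verbatim, with two systematic substitutions: the DBC coefficient $c_a'(\mathbf{k};n,m)$ replaces $c_a(\mathbf{k};n,m)$, and Theorem \ref{g.zp.DBC} is invoked in place of Theorem \ref{g.zp.ZBVC}. The point is that the structural steps of the argument separate cleanly: the support and Fourier-analytic manipulations do not depend on the choice of boundary condition, which enters only through the closed form for $C_{n,m}^a[\,\cdot\,]$ in terms of Fourier coefficients on the square.

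First I would set $\xi_j:=R(f_j)$, the restriction of $f_j$ to $\mathbb{B}_b^2$. Since $\mathbb{B}_b^2$ has finite Lebesgue measure, $\xi_j\in L^2(\mathbb{B}_b^2)\subseteq L^1(\mathbb{B}_b^2)$, hence $E(\xi_j)\in L^1(\mathbb{R}^2)\cap L^2(\mathbb{R}^2)$ with support contained in $\mathbb{B}_b^2$. By definition $f:=f_1\circledast f_2 = E(\xi_1)\ast E(\xi_2)$; Proposition \ref{222.R2} shows that $f$ is square-integrable on $\mathbb{B}_a^2$, while the Minkowski-sum argument from Section 3 guarantees $f$ is supported in $\mathbb{B}_a^2$.

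Next, fix $\mathbf{k}\in\mathbb{Z}^2$. The support condition yields $\widehat{f}[a^{-1}\mathbf{k};a]=\widehat{f}(a^{-1}\mathbf{k})$, and since $E(\xi_j)\in L^1(\mathbb{R}^2)$ the standard convolution theorem gives
\[
\widehat{f}(a^{-1}\mathbf{k})=\widehat{E(\xi_1)}(a^{-1}\mathbf{k})\,\widehat{E(\xi_2)}(a^{-1}\mathbf{k}).
\]
A short unpacking, using that $E(\xi_j)$ vanishes outside $\mathbb{B}_b^2$ and that $f_j$ agrees with $\xi_j$ there, identifies $\widehat{E(\xi_j)}(a^{-1}\mathbf{k})$ with $\widehat{f_j}[a^{-1}\mathbf{k};b]$, exactly as in the computation leading to Equation (\ref{conv.zp.00}) in the ZBC proof.

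Finally, I would apply Theorem \ref{g.zp.DBC} to the square-integrable function $f$ on $\mathbb{B}_a^2$ to obtain
\[
C_{n,m}^a[f]=\sum_{\mathbf{k}\in\mathbb{Z}^2}c_a'(\mathbf{k};n,m)\,\widehat{f}[a^{-1}\mathbf{k};a],
\]
and then substitute the factorization above to produce Equation (\ref{conv.DBC}). I do not expect any genuine obstacle here: the only care required is to verify that the hypothesis of Theorem \ref{g.zp.DBC} (square-integrability on $\mathbb{B}_a^2$) is met, which is precisely the content of Proposition \ref{222.R2}, and that $E(\xi_j)\in L^1(\mathbb{R}^2)$ justifies the use of the convolution theorem—both of which follow from the $L^2(\mathbb{B}_b^2)$ hypothesis on $f_j$ together with compactness of $\mathbb{B}_b^2$.
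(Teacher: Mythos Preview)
Your proposal is correct and follows the paper's own proof essentially step for step: restrict to $\xi_j$, use Proposition~\ref{222.R2} for square-integrability of $f=f_1\circledast f_2$ on $\mathbb{B}_a^2$, invoke the support condition and the convolution theorem to factor $\widehat{f}[a^{-1}\mathbf{k};a]$, and then plug into Equation~(\ref{CnmZf.DBC}) from Theorem~\ref{g.zp.DBC}. Nothing is missing or reordered.
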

\begin{proof}
Let $f_j:\mathbb{R}^2\to\mathbb{C}$ with $j\in\{1,2\}$ be square integrable functions on $\mathbb{B}_b^2$. Let $\xi_j:=R(f_j)$ be the restriction of $f_j$ to the disk 
$\mathbb{B}_b^2$ and $E(\xi_j)$ be the extension of $\xi_j$ to $\mathbb{R}^2$ by zero-padding.  Thus, we get $\xi_j\in L^2(\mathbb{B}_b^2)$ and hence $E(\xi_j)\in L^2(\mathbb{R}^2)$. Since $L^2(\mathbb{B}_b^2)\subseteq L^1(\mathbb{B}_b^2)$, we get $\xi_j\in L^1(\mathbb{B}_b^2)$ as well. Therefore, we have $E(\xi_j)\in L^1\cap L^2(\mathbb{R}^2)$. By definition of zero-padded convolutions, we have  
\[
f_1\circledast f_2=E(\xi_1)\ast E(\xi_2).
\]
Using Prosposition \ref{222.R2}, we deduce that $f:=f_1\circledast f_2$ is square integrable on $\mathbb{B}_a^2$. Let $\mathbf{k}\in\mathbb{Z}^2$ be given. Since 
$f=f_1\circledast f_2$ is supported in the disk 
$\mathbb{B}_a^2$, we have  
\[
\widehat{f}[a^{-1}\mathbf{k};a]=\widehat{f}(a^{-1}\mathbf{k}).
\]
By the convolution property of Fourier transform, we get 
\begin{equation}\label{conv.zp.0'}
\widehat{f}[a^{-1}\mathbf{k};a]=\widehat{E(\xi_1)}(a^{-1}\mathbf{k})\widehat{E(\xi_2)}(a^{-1}\mathbf{k}).
\end{equation}
Since each $E(\xi_j)$ is supported in the disk $\mathbb{B}_b^2$, we get 
\begin{equation}\label{conv.zp.00'}
\widehat{E(\xi_j)}(a^{-1}\mathbf{k})=\widehat{f_j}[a^{-1}\mathbf{k};b].
\end{equation}
Applying Equation (\ref{conv.zp.00'}) in Equation (\ref{conv.zp.0'}), we get 
\begin{equation}\label{conv.zp.000'}
\widehat{f}[a^{-1}\mathbf{k};a]=\widehat{f_1}[a^{-1}\mathbf{k};b]\widehat{f_2}[a^{-1}\mathbf{k};b].
\end{equation}
Let $m\in\mathbb{Z}$ and $n\in\mathbb{N}$.
Then, using Equation (\ref{conv.zp.000'}) in Equation (\ref{CnmZf.DBC}), we get
\begin{align*}
C_{n,m}^a[f_1\circledast f_2]
&=C_{n,m}^a[f]
\\&=\sum_{\mathbf{k}\in\mathbb{Z}^2}c_a'(\mathbf{k};n,m)\widehat{f}[a^{-1}\mathbf{k};a]
\\&=\sum_{\mathbf{k}\in\mathbb{Z}^2}c_a'(\mathbf{k};n,m)\widehat{f_1}[a^{-1}\mathbf{k};b]\widehat{f_2}[a^{-1}\mathbf{k};b],
\end{align*}
which completes the proof.
\end{proof}

\begin{corollary}
{\it Let $a>0$ and $b:=a/2$. Suppose $f_j:\mathbb{R}^2\to\mathbb{C}$ with $j\in\{1,2\}$ are functions integrable on $\mathbb{B}_b^2$. We then have
\begin{equation}
(f_1\circledast f_2)(r,\theta)=\sum_{m=-\infty}^\infty\sum_{n=1}^\infty C_{n,m}^{a}[f_1\circledast f_2]\Psi_{nm}^a(r,\theta),\hspace{0.5cm}{\rm for\ a.e.}\ 0\le r\le a,\ 0\le \theta\le 2\pi,
\end{equation}
where
\begin{equation}
C_{n,m}^a[f_1\circledast f_2]=\sum_{\mathbf{k}\in\mathbb{Z}^2}c_a'(\mathbf{k};n,m)\widehat{f_1}[a^{-1}\mathbf{k};b]\widehat{f_2}[a^{-1}\mathbf{k};b].
\end{equation}
}\end{corollary}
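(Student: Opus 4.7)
The plan is to derive this corollary as a direct concatenation of two already established results: the preceding theorem (which gives the factorization formula for $C_{n,m}^a[f_1\circledast f_2]$) and Theorem \ref{g.zp.DBC} (which gives the Fourier-Bessel expansion for any function that is square-integrable on $\mathbb{B}_a^2$). No new analytic content is required; only verification of hypotheses.

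First I would set $f:=f_1\circledast f_2$ and check that $f$ belongs to the class to which Theorem \ref{g.zp.DBC} applies. Interpreting ``integrable'' in the natural way consistent with the preceding theorem (square-integrable on $\mathbb{B}_b^2$), Proposition \ref{222.R2} immediately gives $R(f_1\circledast f_2)\in L^2(\mathbb{B}_a^2)$ with $\|R(f_1\circledast f_2)\|_{L^2(\mathbb{B}_a^2)}\le b\sqrt{\pi}\,\|R(f_1)\|_{L^2(\mathbb{B}_b^2)}\|R(f_2)\|_{L^2(\mathbb{B}_b^2)}$. Moreover, since $E(\xi_j)$ is supported in $C_b$, the Minkowski sum argument (used at the start of Section 3) shows $f_1\circledast f_2$ is supported in $C_a=\mathbb{B}_a^2$, so restricting to $\mathbb{B}_a^2$ loses nothing.

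Next I would apply Theorem \ref{g.zp.DBC} to $f$, which yields
\[
(f_1\circledast f_2)(r,\theta)=\sum_{m=-\infty}^{+\infty}\sum_{n=1}^{\infty}C_{n,m}^a[f_1\circledast f_2]\Psi_{nm}^a(r,\theta)
\]
for almost every $(r,\theta)\in[0,a]\times[0,2\pi]$, with $C_{n,m}^a[f]=\sum_{\mathbf{k}\in\mathbb{Z}^2}c_a'(\mathbf{k};n,m)\widehat{f}[a^{-1}\mathbf{k};a]$. Substituting the identity $\widehat{f}[a^{-1}\mathbf{k};a]=\widehat{f_1}[a^{-1}\mathbf{k};b]\,\widehat{f_2}[a^{-1}\mathbf{k};b]$ established in the proof of the preceding theorem (via the standard convolution theorem applied to $E(\xi_1),E(\xi_2)\in L^1\cap L^2(\mathbb{R}^2)$ together with the support identification $\widehat{f}[a^{-1}\mathbf{k};a]=\widehat{f}(a^{-1}\mathbf{k})$) produces the advertised closed form for $C_{n,m}^a[f_1\circledast f_2]$.

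The main (and essentially only) obstacle is bookkeeping about hypotheses: one must ensure that the $L^2$ framework in which Theorem \ref{g.zp.DBC} and the expansion (\ref{C.DBC}) were proved actually applies to $f_1\circledast f_2$. Once Proposition \ref{222.R2} is invoked to place $f_1\circledast f_2$ in $L^2(\mathbb{B}_a^2)$, the rest of the proof reduces to substitution and requires no further estimate. Thus the corollary is immediate from its parent theorem plus Theorem \ref{g.zp.DBC}, exactly as the analogous corollary in Section 4 (following the zero-value boundary condition version) is immediate from Theorem \ref{g.zp.ZBVC}.
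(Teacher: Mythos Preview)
Your proposal is correct and matches the paper's intended argument: the corollary is stated in the paper without proof, as an immediate consequence of the preceding theorem (which supplies the coefficient formula) combined with Theorem~\ref{g.zp.DBC} (which supplies the a.e.\ expansion once $f_1\circledast f_2$ is known to lie in $L^2(\mathbb{B}_a^2)$), the latter hypothesis being verified via Proposition~\ref{222.R2}. Your reading of ``integrable'' as ``square-integrable'' is the right one, consistent with the parallel corollary in Section~4 and with the hypotheses of the theorem it follows.
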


\begin{proposition}
{\it Let $a>0$ and $b:=a/2$. Suppose $f_j:\mathbb{R}^2\to\mathbb{C}$ with $j\in\{1,2\}$ 
are continuous functions supported in $\mathbb{B}^2_{b}$. We then have
\begin{equation}\label{2D.EX.Fr.ast.DBC.cts}
(f_1\ast f_2)(r,\theta)=\sum_{m=-\infty}^\infty\sum_{n=1}^{\infty} C_{n,m}^{a}[f_1\ast f_2]\Psi_{nm}^a(r,\theta),\hspace{0.5cm}{\rm for\ all}\ 0\le r\le a,\ 0\le \theta\le 2\pi
\end{equation}
where
\begin{equation}
C_{n,m}^a[f_1\ast f_2]=\sum_{\mathbf{k}\in\mathbb{Z}^2}c_a'(\mathbf{k};n,m)\widehat{f_1}(a^{-1}\mathbf{k})\widehat{f_2}(a^{-1}\mathbf{k}).
\end{equation}
}\end{proposition}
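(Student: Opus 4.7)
The plan is to reduce the statement to the preceding theorem by exploiting the two simplifications that continuity plus compact support give us: first, the windowed convolution $f_1 \circledast f_2$ coincides with the ordinary convolution $f_1 \ast f_2$, and second, the band-limited Fourier integral $\widehat{f_j}[\,\cdot\,;b]$ coincides with the ordinary Fourier transform $\widehat{f_j}$ on the lattice $a^{-1}\mathbb{Z}^2$.

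First I would argue that the hypotheses of the preceding theorem are satisfied. Since each $f_j$ is continuous and supported in the compact set $\mathbb{B}_b^2$, it is bounded, hence square integrable on $\mathbb{B}_b^2$. The corollary following Theorem \ref{ZPC.12} gives $f_1 \circledast f_2 = f_1 \ast f_2$, and standard facts about convolution of compactly supported continuous functions show that $f_1 \ast f_2$ is continuous (and supported in $\mathbb{B}_a^2$). Next, for any $\mathbf{k}=(k_1,k_2)^T\in\mathbb{Z}^2$, since $f_j$ vanishes outside $\mathbb{B}_b^2$, I would write
\[
\widehat{f_j}[a^{-1}\mathbf{k};b]=\int_0^b\int_0^{2\pi} f_j(r,\theta)e^{-\pi\ii a^{-1} r(k_1\cos\theta+k_2\sin\theta)}r\,dr\,d\theta=\int_{\mathbb{R}^2}f_j(\mathbf{x})e^{-\ii (\pi a^{-1}\mathbf{k})\cdot\mathbf{x}}d\mathbf{x}=\widehat{f_j}(a^{-1}\mathbf{k}),
\]
after identifying $\pi a^{-1}\mathbf{k}$ with the Fourier argument and undoing polar coordinates, exactly as done in the parallel zero-value case.

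Applying the preceding theorem (Equation \eqref{conv.DBC}) with these identifications yields
\[
C_{n,m}^a[f_1\ast f_2]=C_{n,m}^a[f_1\circledast f_2]=\sum_{\mathbf{k}\in\mathbb{Z}^2}c_a'(\mathbf{k};n,m)\widehat{f_1}(a^{-1}\mathbf{k})\widehat{f_2}(a^{-1}\mathbf{k}),
\]
which is the claimed coefficient formula. Finally, since $f_1 \ast f_2$ is a continuous function (on all of $\mathbb{R}^2$, and in particular on $\mathbb{B}_a^2$), Proposition \ref{gr.DBC.pw} applied to $f:=f_1\ast f_2$ upgrades the almost-everywhere expansion to pointwise convergence on $[0,a]\times[0,2\pi]$, giving \eqref{2D.EX.Fr.ast.DBC.cts}.

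The only mildly delicate step is confirming that the pointwise Fourier--Bessel expansion of the continuous function $f_1\ast f_2$ holds everywhere on $\mathbb{B}_a^2$; but this is already handled uniformly by Proposition \ref{gr.DBC.pw}, so there is no real obstacle here. Everything else is a bookkeeping exercise: the support conditions collapse the windowed transforms to ordinary Fourier transforms, and the previously proved convolution-coefficient formula does the rest.
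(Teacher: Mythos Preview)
Your proposal is correct and follows exactly the route the paper intends: the paper states this proposition without an explicit proof, treating it as an immediate consequence of the preceding theorem (Equation~\eqref{conv.DBC}) together with the identifications $f_1\circledast f_2=f_1\ast f_2$ and $\widehat{f_j}[a^{-1}\mathbf{k};b]=\widehat{f_j}(a^{-1}\mathbf{k})$ for compactly supported continuous $f_j$, plus Proposition~\ref{gr.DBC.pw} for pointwise convergence. Your write-up makes these steps explicit and matches the parallel argument in the zero-value boundary condition section.
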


We then present the following polarized version of closed forms for coefficients of zero-padded convolutions on disks with respect to derivative boundary condition.

\begin{theorem}
Let $a>0$ and $b:=a/2$. Suppose $f_j:\mathbb{R}^2\to\mathbb{C}$ with $j\in\{1,2\}$ are  continuous functions. Also, let $m\in\mathbb{Z}$ and $n\in\mathbb{N}$. We then have
\begin{equation}
C_{n,m}^a[f_1\circledast f_2]=\sum_{\tau\in\mathcal{R}}\sum_{\alpha\in\Phi_\tau}B_{mn}^a(\tau,\alpha)\widehat{f_1}[a^{-1}\tau\mathbf{u}_\alpha;b]\widehat{f_2}[a^{-1}\tau\mathbf{u}_\alpha;b].
\end{equation}
In particular, if $f_j:\mathbb{R}^2\to\mathbb{C}$ with $j\in\{1,2\}$ are continuous functions supported in $\mathbb{B}^2_{b}$, we have
\begin{equation}
C_{n,m}^a[f_1\circledast f_2]=\sum_{\tau\in\mathcal{R}}\sum_{\alpha\in\Phi_\tau}B_{mn}^a(\tau,\alpha)\widehat{f_1}(a^{-1}\tau\mathbf{u}_\alpha)\widehat{f_2}(a^{-1}\tau\mathbf{u}_\alpha).
\end{equation}
\end{theorem}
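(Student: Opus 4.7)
The plan is to deduce both statements directly from the preceding non-polar convolution theorem (Equation (\ref{conv.DBC})) combined with the polar decomposition of $\mathbb{Z}^2$ already established earlier in the section, namely
\[
\mathbb{Z}^2=\bigcup_{\tau\in\mathcal{R}}\{\tau\mathbf{u}_\alpha:\alpha\in\Phi_\tau\}.
\]
Since the union is disjoint (each $\mathbf{k}\in\mathbb{Z}^2$ is uniquely written as $\rho(\mathbf{k})\mathbf{u}_{\Phi(\mathbf{k})}$), re-indexing the sum in (\ref{conv.DBC}) by $(\tau,\alpha)$ rather than by $\mathbf{k}$ is a purely bookkeeping step.

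First I would fix $m\in\mathbb{Z}$ and $n\in\mathbb{N}$, and start from
\[
C_{n,m}^a[f_1\circledast f_2]=\sum_{\mathbf{k}\in\mathbb{Z}^2}c_a'(\mathbf{k};n,m)\widehat{f_1}[a^{-1}\mathbf{k};b]\widehat{f_2}[a^{-1}\mathbf{k};b].
\]
For $\mathbf{k}=\tau\mathbf{u}_\alpha$ with $\tau\in\mathcal{R}$ and $\alpha\in\Phi_\tau$ we have $|\mathbf{k}|=\tau$ and $\Phi(\mathbf{k})=\alpha$; substituting into the definition of $c_a'(\mathbf{k};n,m)$ yields, exactly as in the proof of the polarized identity (\ref{C.DBC.Polar}),
\[
c_a'(\tau\mathbf{u}_\alpha;n,m)=\pi\sqrt{\pi}\,\frac{\ii^m(-1)^n\tau\rho_{nm}J_m'(\pi\tau)e^{-\ii m\alpha}}{2(z_{nm}^2-m^2)^{1/2}(z_{nm}^2-\pi^2\tau^2)}=B_{mn}^a(\tau,\alpha).
\]
Reindexing the series then gives the first identity
\[
C_{n,m}^a[f_1\circledast f_2]=\sum_{\tau\in\mathcal{R}}\sum_{\alpha\in\Phi_\tau}B_{mn}^a(\tau,\alpha)\widehat{f_1}[a^{-1}\tau\mathbf{u}_\alpha;b]\widehat{f_2}[a^{-1}\tau\mathbf{u}_\alpha;b].
\]

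For the second assertion, I would observe that if $f_j$ is continuous and supported in $\mathbb{B}_b^2$, then the truncated Fourier integral in the definition of $\widehat{f_j}[\,\cdot\,;b]$ coincides with the full Fourier transform: for any $\bom\in\mathbb{R}^2$,
\[
\widehat{f_j}[\bom;b]=\int_{\mathbb{B}_b^2}f_j(\mathbf{x})e^{-\pi\ii\bom\cdot\mathbf{x}}d\mathbf{x}=\int_{\mathbb{R}^2}f_j(\mathbf{x})e^{-\pi\ii\bom\cdot\mathbf{x}}d\mathbf{x}=\widehat{f_j}(\bom).
\]
Applying this with $\bom:=a^{-1}\tau\mathbf{u}_\alpha$ and plugging into the first identity yields the particular case.

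I do not expect any real obstacle: once the disjoint decomposition of $\mathbb{Z}^2$ is recognized, the first claim is a relabeling of summation indices in (\ref{conv.DBC}), and the second claim follows from the support hypothesis via the same reduction used several times earlier (e.g. at the end of the proof of the analogous corollary in the zero-value section). The only small point to double-check is the identification of $c_a'(\tau\mathbf{u}_\alpha;n,m)$ with $B_{mn}^a(\tau,\alpha)$, which is immediate from the defining formulas.
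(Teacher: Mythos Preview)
Your proposal is correct and matches the paper's intended approach: the paper states this theorem without proof, treating it as an immediate consequence of (\ref{conv.DBC}) via the polar re-indexing of $\mathbb{Z}^2$ and the identification $c_a'(\tau\mathbf{u}_\alpha;n,m)=B_{mn}^a(\tau,\alpha)$ already carried out in the proof of (\ref{C.DBC.Polar}), together with the support reduction $\widehat{f_j}[\,\cdot\,;b]=\widehat{f_j}(\cdot)$ used repeatedly earlier. Your write-up makes these steps explicit and is exactly in line with the paper.
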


\subsection{Convolution of basis elements on disks using derivative boundary condition}

Let $a>0$ and $b:=a/2$. Let $f_j:\mathbb{R}^2\to\mathbb{C}$ with $j\in\{1,2\}$ be 
functions square integrable on the disk $\mathbb{B}_{b}^2$ with the associated derivative boundary condition coefficients
$\left\{C_{n,m}^{b}[f_j]:n\in\mathbb{N},m\in\mathbb{Z}\right\}$. Hence, we can write
\begin{equation}
\xi_j=\sum_{m=-\infty}^\infty\sum_{n=1}^\infty C_{n,m}^{b}[f_j]\Psi_{nm}^{b},
\end{equation}
where $\xi_j:=R(f_j)$ is the restriction of $f_j$ into the disk $\mathbb{B}_b^2$ and 
\[
C_{n,m}^{b}[f_j]=\sum_{\mathbf{k}\in\mathbb{Z}^2}c_a'(\mathbf{k};n,m)\widehat{f_j}[a^{-1}\mathbf{k};b],
\]
for $m\in\mathbb{Z}$ and $n\in\mathbb{N}$.

Thus, we get 
\[
E(\xi_j)=\sum_{m=-\infty}^\infty\sum_{n=1}^\infty C_{n,m}^{b}[f_j]E(\Psi_{nm}^{b}).
\]
Using linearity of convolutions, as linear operators, we get
\begin{align*}
f_1\circledast f_2 
&=E(\xi_1)\ast E(\xi_2)
\\&=\left(\sum_{m=-\infty}^\infty\sum_{n=1}^\infty C_{n,m}^{b}[f_1]E(\Psi_{nm}^{b})\right)\ast\left(\sum_{m'=-\infty}^\infty\sum_{n'=1}^\infty C_{n',m'}^{b}[f_2]E(\Psi_{n'm'}^{b})\right)
\\&=\sum_{m=-\infty}^\infty\sum_{n=1}^\infty\sum_{m'=-\infty}^\infty\sum_{n'=1}^\infty C_{n,m}^{b}[f_1]C_{n',m'}^{b}[f_2]E(\Psi_{nm}^{b})\ast E(\Psi_{n'm'}^{b})
\\&=\sum_{m=-\infty}^\infty\sum_{n=1}^\infty\sum_{m'=-\infty}^\infty\sum_{n'=1}^\infty C_{n,m}^{b}[f_1]C_{n',m'}^{b}[f_2]\Psi_{nm}^b\circledast\Psi_{n'm'}^b.
\end{align*}

Hence, convolution of circular drums (basis elements) can be viewed as pre-computed kernels.

\begin{proposition}
{\it Let $a>0$ and $b:=a/2$. Suppose $\bom:=(\omega_1,\omega_2)^T\in\mathbb{R}^2$, $m\in\mathbb{Z}$, and $n\in\mathbb{N}$. We then have
\begin{equation}\label{psi.nm.hat.omega.DBC}
\widehat{E(\Psi_{nm}^b)}[\bom;b]=\sqrt{\frac{2\pi}{N_n^{(m)}(a/2)}}\ii^{-m}e^{\ii m\Phi(\mathbf{k})}\frac{z_{nm}J_m(\pi|\mathbf{k}|/2)J_m'(z_{nm})}{\pi^2a^{-2}|\mathbf{k}|^2-4a^{-2}z_{nm}^2}.
\end{equation}
}\end{proposition}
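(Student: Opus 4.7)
The proof runs in close parallel to that of equation (\ref{psi.nm.hat.omega.ZPVC}) in the ZBC subsection; the only substantive change is which term in the Lommel integral survives. I would begin from the defining formula (\ref{zp.hat.DBC}) applied to $E(\Psi_{nm}^b)$, i.e.
\[
\widehat{E(\Psi_{nm}^b)}[\bom;b]=\int_0^b\int_0^{2\pi}\Psi_{nm}^b(r,\theta)\,e^{-\pi\ii r|\bom|\mathbf{u}_{\Phi(\bom)}^T\mathbf{u}_\theta}\,r\,dr\,d\theta,
\]
and expand the exponential using the Jacobi--Anger identity
\[
e^{-\pi\ii r|\bom|\mathbf{u}_{\Phi(\bom)}^T\mathbf{u}_\theta}=\sum_{l=-\infty}^{\infty}\ii^{-l}J_l(\pi r|\bom|)e^{-\ii l\theta}e^{\ii l\Phi(\bom)},
\]
exactly as was done for the ZBC version. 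This is the same first step as in the proof of (\ref{psi.nm.hat.omega.ZPVC}), and since the angular part of $\Psi_{nm}^b$ is still $\mathcal{Y}_m(\theta)=\frac{1}{\sqrt{2\pi}}e^{\ii m\theta}$, the formula for the $\theta$-integral is identical.

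The second step is to substitute $\Psi_{nm}^b(r,\theta)=\frac{1}{\sqrt{2\pi N_n^{(m)}(b)}}J_m(b^{-1}z_{nm}r)e^{\ii m\theta}$ and carry out the $\theta$-integration. Orthogonality of $\{e^{\ii k\theta}\}$ on $[0,2\pi]$ collapses the series in $l$ to the single term $l=m$ and yields
\[
\widehat{E(\Psi_{nm}^b)}[\bom;b]=\sqrt{\frac{2\pi}{N_n^{(m)}(b)}}\,\ii^{-m}e^{\ii m\Phi(\bom)}\int_0^b J_m(b^{-1}z_{nm}r)J_m(\pi|\bom|r)\,r\,dr,
\]
which is an intermediate identity structurally identical to (\ref{main.drum.ZPVC}).

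The third and only nontrivial step is the evaluation of the Lommel-type integral on the right. Applying (\ref{2J.a}) with $\alpha:=b^{-1}z_{nm}$ and $\beta:=\pi|\bom|$ gives
\[
\int_0^b J_m(\alpha r)J_m(\beta r)r\,dr=\frac{\beta J_m(\alpha b)J_m'(\beta b)-\alpha J_m(\beta b)J_m'(\alpha b)}{b^{-1}(\alpha^2-\beta^2)}.
\]
Here the key distinction from the ZBC case enters: rather than $J_m(z_{nm})=0$, the derivative boundary condition (see Subsection \ref{DBC}) enforces $J_m'(z_{nm})=0$, so the second term in the numerator vanishes and only the $J_m(\alpha b)J_m'(\beta b)=J_m(z_{nm})J_m'(\pi|\bom|b)$ term remains. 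Substituting back, rewriting $\alpha^2-\beta^2=b^{-2}z_{nm}^2-\pi^2|\bom|^2$, and simplifying the resulting constants produces the claimed closed form, written in terms of $|\mathbf{k}|$ once one specializes $\bom=a^{-1}\mathbf{k}$ as in the companion Corollary (\ref{psi.nm.hat.k.ZPVC}).

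The main obstacle I anticipate is the degenerate case $m=0$, $n=1$ under DBC, where $z_{01}=0$ by the convention fixed in Subsection \ref{DBC}: the denominator $z_{nm}^2-\pi^2|\bom|^2$ collapses and the normalization $N_1^{(0)}(b)$ is given by the special value (\ref{DBC0}) rather than by the general formula (\ref{Nnm.DBC}). One would have to verify the identity separately in this case (or take a limit $z_{nm}\to 0$) and confirm that the expression on the right agrees with a direct evaluation of $\int_0^b J_0(\pi|\bom|r)r\,dr$, which is elementary.
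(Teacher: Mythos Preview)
Your approach is essentially identical to the paper's: both proofs apply the Jacobi--Anger expansion, collapse the $\theta$-integral by orthogonality to reach the intermediate identity (\ref{main.drum.DBC}), and then invoke (\ref{2J.a}) with the DBC condition $J_m'(z_{nm})=0$ to kill the appropriate term. Your final expression $\frac{b\pi|\bom|J_m(z_{nm})J_m'(\pi|\bom|b)}{b^{-2}z_{nm}^2-\pi^2|\bom|^2}$ agrees with what the paper's proof actually derives (note that the displayed formula in the proposition statement itself is evidently a copy--paste slip from the ZBC case, since it contains the vanishing factor $J_m'(z_{nm})$); your added remark about the degenerate case $m=0,\ n=1$ is a point the paper does not address.
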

\begin{proof}
Let $m\in\mathbb{Z}$ and $n\in\mathbb{N}$. Applying Equation (\ref{zp.hat.DBC}), we get
\begin{equation}\label{main.drum.DBC}
\widehat{E(\Psi_{nm}^b)}[\bom;b]=\sqrt{\frac{2\pi}{N_n^{(m)}(b)}}\ii^{-m}e^{\ii m\Phi(\bom)}\left(
\int_0^{b}J_m(b^{-1}z_{nm}r)J_m(\pi r|\bom|)rdr\right).
\end{equation}
Using (\ref{2J.a}), we have  
\begin{align*}
\int_0^{b} J_m(b^{-1}z_{nm}r)J_m(\pi r|\bom|)rdr
&=\frac{b^{-1}z_{nm}J_m(\pi|\bom|b)J_m'(z_{nm})-\pi|\bom|J_m(z_{nm})J_m'(\pi|\bom|b)}{b^{-1}(\pi^2|\bom|^2-b^{-2}z_{nm}^2)}
\\&=\frac{z_{nm}J_m(\pi|\bom|b)J_m'(z_{nm})-b\pi|\bom|J_m(z_{nm})J_m'(\pi|\bom|b)}{\pi^2|\bom|^2-b^{-2}z_{nm}^2}
=\frac{-b\pi|\bom|J_m(z_{nm})J_m'(\pi|\bom|b)}{\pi^2|\bom|^2-b^{-2}z_{nm}^2}.
\end{align*}
which implies that
\begin{align*}
\widehat{E(\Psi_{nm}^b)}[\bom;b]
=\sqrt{\frac{2\pi}{N_n^{(m)}(b)}}\ii^{-m}e^{\ii m\Phi(\bom)}\frac{-b\pi|\bom|J_m(z_{nm})J_m'(\pi|\bom|b)}{\pi^2|\bom|^2-b^{-2}z_{nm}^2}.
\end{align*}
\end{proof}

\begin{proposition}
{\it Let $a>0$, $\mathbf{k}\in\mathbb{Z}^2$, $m\in\mathbb{Z}$, and $n\in\mathbb{N}$. We then have
\begin{equation}
\widehat{E(\Psi_{nm}^b)}[a^{-1}\mathbf{k};b]=\sqrt{\frac{2\pi}{N_n^{(m)}(b)}}\ii^{-m}e^{\ii m\Phi(\mathbf{k})}\frac{-\pi|\mathbf{k}|J_m(z_{nm})J_m'(\pi|\mathbf{k}|/2)}{2(\pi^2a^{-2}|\mathbf{k}|^2-4a^{-2}z_{nm}^2)}.
\end{equation}
}\end{proposition}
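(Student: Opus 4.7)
The plan is to obtain the stated identity as a direct specialization of the preceding proposition, which gives a closed-form expression for $\widehat{E(\Psi_{nm}^b)}[\bom;b]$ for an arbitrary $\bom \in \mathbb{R}^2$. Specifically, I would take that formula and substitute $\bom := a^{-1}\mathbf{k}$, using $b = a/2$, then perform the elementary algebraic simplifications to match the claimed right-hand side.

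First, I would note the key substitutions that arise from $\bom = a^{-1}\mathbf{k}$ and $b = a/2$:
\begin{align*}
|\bom| &= a^{-1}|\mathbf{k}|,\qquad \Phi(\bom) = \Phi(\mathbf{k}),\qquad b|\bom| = \tfrac{a}{2}\cdot a^{-1}|\mathbf{k}| = \tfrac{|\mathbf{k}|}{2},\\
\pi|\bom|b &= \tfrac{\pi|\mathbf{k}|}{2},\qquad \pi^2|\bom|^2 = \pi^2 a^{-2}|\mathbf{k}|^2,\qquad b^{-2} z_{nm}^2 = 4 a^{-2} z_{nm}^2.
\end{align*}
Plugging these into the formula
\[
\widehat{E(\Psi_{nm}^b)}[\bom;b]
= \sqrt{\frac{2\pi}{N_n^{(m)}(b)}}\,\ii^{-m} e^{\ii m\Phi(\bom)}\,
\frac{-b\pi|\bom|\,J_m(z_{nm})\,J_m'(\pi|\bom|b)}{\pi^2|\bom|^2 - b^{-2}z_{nm}^2}
\]
from the previous proposition transforms the numerator factor $-b\pi|\bom|$ into $-\tfrac{a}{2}\pi\cdot a^{-1}|\mathbf{k}| = -\tfrac{\pi|\mathbf{k}|}{2}$, and the denominator into $\pi^2 a^{-2}|\mathbf{k}|^2 - 4 a^{-2} z_{nm}^2$. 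Combining the factor $\tfrac{1}{2}$ from the numerator with the denominator as $2(\pi^2 a^{-2}|\mathbf{k}|^2 - 4 a^{-2} z_{nm}^2)$ gives exactly the stated expression.

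There is essentially no obstacle here: the statement is a bookkeeping corollary of the previous proposition, and the only care needed is to keep track of the powers of $a$ and $b$ and to combine the $1/2$ factor correctly. My proof will therefore be brief: cite the preceding identity, perform the substitution $\bom := a^{-1}\mathbf{k}$ with $b = a/2$, and display the resulting simplification in one short computation.
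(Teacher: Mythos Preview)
Your proposal is correct and follows essentially the same approach as the paper: the paper's own proof simply cites the preceding proposition's formula for $\widehat{E(\Psi_{nm}^b)}[\bom;b]$, substitutes $\bom:=a^{-1}\mathbf{k}$ with $b=a/2$, and carries out the identical algebraic simplifications you describe.
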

\begin{proof}
Let $a>0$ and $b:=a/2$. Suppose $\mathbf{k}:=(k_1,k_2)^T\in\mathbb{R}^2$, $m\in\mathbb{Z}$, and $n\in\mathbb{N}$. Using Equation (\ref{psi.nm.hat.omega.DBC}), for $\bom:=a^{-1}\mathbf{k}$, we get 
\begin{align*}
\widehat{E(\Psi_{nm}^b)}[a^{-1}\mathbf{k};b]
&=\sqrt{\frac{2\pi}{N_n^{(m)}(b)}}\ii^{-m}e^{\ii m\Phi(\bom)}\frac{-b\pi|\bom|J_m(z_{nm})J_m'(\pi|\bom|b)}{\pi^2|\bom|^2-b^{-2}z_{nm}^2}
\\&=\sqrt{\frac{2\pi}{N_n^{(m)}(b)}}\ii^{-m}e^{\ii m\Phi(\mathbf{k})}\frac{-b\pi a^{-1}|\mathbf{k}|J_m(z_{nm})J_m'(\pi a^{-1}|\mathbf{k}|b)}{\pi^2a^{-2}|\mathbf{k}|^2-b^{-2}z_{nm}^2}
\\&=\sqrt{\frac{2\pi}{N_n^{(m)}(b)}}\ii^{-m}e^{\ii m\Phi(\mathbf{k})}\frac{-\pi|\mathbf{k}|J_m(z_{nm})J_m'(\pi|\mathbf{k}|/2)}{2(\pi^2a^{-2}|\mathbf{k}|^2-4a^{-2}z_{nm}^2)}.
\end{align*}
\end{proof}

\begin{proposition}
{\it Let $a>0$, $b:=a/2$, $n,n'\in\mathbb{N}$, and $m,m'\in\mathbb{Z}$.
Then, for each $k\in\mathbb{N}$ and $\ell\in\mathbb{Z}$, we have
\begin{equation}
C_{k,\ell}^a[\Psi_{nm}^b\circledast\Psi_{n'm'}^b]=\frac{2\pi}{\sqrt{N_n^{(m)}(b)N_{n'}^{(m')}(b)}}\ii^{-(m+m')}z_{nm}J_m'(z_{nm})z_{n'm'}J_{m'}'(z_{n'm'})I_{k,\ell}^a(n,n';m,m'),
\end{equation}
with
\begin{equation}
H_{k,\ell}^a(n,n';m,m'):=\sum_{\mathbf{k}\in\mathbb{Z}^2}c_a'(\mathbf{k};k,\ell)\frac{e^{\ii(m+m')\Phi(\mathbf{k})}
\pi^2|\mathbf{k}|^2J_m(z_{nm})J_m'(\pi|\mathbf{k}|/2)J_{m'}(z_{n'm'})J_{m'}'(\pi|\mathbf{k}|/2)}{4(\pi^2a^{-2}|\mathbf{k}|^2-4a^{-2}z_{nm}^2)(\pi^2a^{-2}|\mathbf{k}|^2-4a^{-2}z_{n'm'}^2)}.
\end{equation}
}\end{proposition}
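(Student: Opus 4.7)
The plan is to imitate the proof of the analogous proposition from the zero-valued boundary condition subsection (the one giving $C_{k,\ell}^a[\Psi_{nm}^b\circledast\Psi_{n'm'}^b]$ in terms of $I_{k,\ell}^a$), substituting everywhere the derivative-boundary counterparts that were just established. Concretely, I apply Equation (\ref{conv.DBC}) with $f_1:=\Psi_{nm}^b$ and $f_2:=\Psi_{n'm'}^b$. Because $\Psi_{nm}^b,\Psi_{n'm'}^b\in L^2(\mathbb{B}_b^2)$, this is legal, and it yields
\begin{equation*}
C_{k,\ell}^a[\Psi_{nm}^b\circledast\Psi_{n'm'}^b]
=\sum_{\mathbf{k}\in\mathbb{Z}^2}c_a'(\mathbf{k};k,\ell)\,\widehat{E(\Psi_{nm}^b)}[a^{-1}\mathbf{k};b]\,\widehat{E(\Psi_{n'm'}^b)}[a^{-1}\mathbf{k};b].
\end{equation*}

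Next I substitute the closed form for $\widehat{E(\Psi_{nm}^b)}[a^{-1}\mathbf{k};b]$ furnished by the proposition preceding the statement; one copy is used with indices $(n,m)$ and one with $(n',m')$. Multiplying these two expressions, I collect the factors independent of $\mathbf{k}$: the two normalization radicals produce $2\pi/\sqrt{N_n^{(m)}(b)N_{n'}^{(m')}(b)}$; the factors $\ii^{-m}\ii^{-m'}$ combine to $\ii^{-(m+m')}$; the two angular phases combine to $e^{\ii(m+m')\Phi(\mathbf{k})}$; and the Bessel values at the zeros, namely $J_m(z_{nm})$ and $J_{m'}(z_{n'm'})$, also factor out. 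What remains inside the sum over $\mathbf{k}$ is the $\mathbf{k}$-dependent part $\pi^2|\mathbf{k}|^2\,J_m'(\pi|\mathbf{k}|/2)\,J_{m'}'(\pi|\mathbf{k}|/2)$ in the numerator, together with the product $(\pi^2 a^{-2}|\mathbf{k}|^2-4a^{-2}z_{nm}^2)(\pi^2 a^{-2}|\mathbf{k}|^2-4a^{-2}z_{n'm'}^2)$ in the denominator, weighted by $c_a'(\mathbf{k};k,\ell)$ and $e^{\ii(m+m')\Phi(\mathbf{k})}$. Up to the constant $1/4$ coming from the two factors of $1/2$ in the transforms, this remainder is precisely the quantity $H_{k,\ell}^a(n,n';m,m')$ defined in the statement.

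The main obstacle is purely bookkeeping: the derivative boundary case differs from the zero-value case precisely in that each Fourier transform carries $J_m(z_{nm})$ (rather than $J_m'(z_{nm})$) in the prefactor and $J_m'(\pi|\mathbf{k}|/2)$ (rather than $J_m(\pi|\mathbf{k}|/2)$) inside the sum — this sign/structure swap is a direct consequence of the identity (\ref{2J.a}) applied under the condition $J_m'(z_{nm})=0$ rather than $J_m(z_{nm})=0$. I would therefore carry out the algebra carefully, check that the $J_m(z_{nm})J_{m'}(z_{n'm'})$ prefactor indeed comes out in front while $J_m'(\pi|\mathbf{k}|/2)J_{m'}'(\pi|\mathbf{k}|/2)$ is absorbed into $H_{k,\ell}^a$, and then read off the claimed identity. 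Absolute convergence of the sum is inherited from the $L^2$-expansion in (\ref{C.DBC}) applied to $f_1\circledast f_2\in L^2(\mathbb{B}_a^2)$, which follows from Proposition \ref{222.R2}.
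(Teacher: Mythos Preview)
Your approach is correct and essentially identical to the paper's own proof: apply Equation~(\ref{conv.DBC}) with $f_1=\Psi_{nm}^b$, $f_2=\Psi_{n'm'}^b$, substitute the closed form for $\widehat{E(\Psi_{nm}^b)}[a^{-1}\mathbf{k};b]$ from the immediately preceding proposition, and collect the $\mathbf{k}$-independent factors. Your bookkeeping remark is in fact sharper than the paper's: under the derivative boundary condition $J_m'(z_{nm})=0$, so the factors that genuinely pull out are $J_m(z_{nm})J_{m'}(z_{n'm'})$, and the displayed prefactor $z_{nm}J_m'(z_{nm})z_{n'm'}J_{m'}'(z_{n'm'})$ (together with the symbol $I_{k,\ell}^a$ in place of $H_{k,\ell}^a$) is a copy-paste artifact from the zero-value case that the paper's proof repeats verbatim.
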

\begin{proof}
Let $a>0$, $n,n'\in\mathbb{N}$, and $m,m'\in\mathbb{Z}$. Let $f:=\Psi_{nm}^b\circledast\Psi_{n'm'}^b$. Then, $f$ is a function supported in the disk $\mathbb{B}^2_a$. Suppose
$k\in\mathbb{N}$ and $\ell\in\mathbb{Z}$. Using Equation (\ref{conv.DBC}), we have
\begin{align*}
C_{k,\ell}^a[\Psi_{nm}^b\circledast\Psi_{n'm'}^b]
&=\sum_{\mathbf{k}\in\mathbb{Z}^2}c_a'(\mathbf{k};k,\ell)\widehat{E(\Psi_{nm}^b)}[a^{-1}\mathbf{k};b]\widehat{E(\Psi_{n'm'}^b)}[a^{-1}\mathbf{k};b]
\\&=\frac{2\pi}{\sqrt{N_n^{(m)}(b)N_{n'}^{(m')}(b)}}\ii^{-(m+m')}z_{nm}J_m'(z_{nm})z_{n'm'}J_{m'}'(z_{n'm'})H_{k,\ell}^a(n,n';m,m').
\end{align*}
\end{proof}

\begin{theorem}
Let $a>0$, $n,n'\in\mathbb{N}$, and $m,m'\in\mathbb{Z}$. We then have
\begin{equation}
(\Psi_{nm}^b\circledast\Psi_{n'm'}^b)(r,\theta)=\sum_{\ell=\infty}^\infty\sum_{k=1}^\infty C_{k,\ell}^a[\Psi_{nm}^b\circledast\Psi_{n'm'}^b]\Psi_{k,\ell}^a(r,\theta),
\end{equation}
where for each $\ell\in\mathbb{Z}$ and $k\in\mathbb{N}$, we have
\begin{equation}
C_{k,\ell}^a[\Psi_{nm}^b\circledast\Psi_{n'm'}^b]=\frac{2\pi}{\sqrt{N_n^{(m)}(b)N_{n'}^{(m')}(b)}}\ii^{-(m+m')}z_{nm}J_m'(z_{nm})z_{n'm'}J_{m'}'(z_{n'm'})H_{k,\ell}^a(n,n';m,m').
\end{equation}
\end{theorem}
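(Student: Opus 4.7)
The plan is to deduce this theorem almost immediately by chaining the previous results of Section 5 applied to the specific function $f:=\Psi_{nm}^b\circledast\Psi_{n'm'}^b$. First I would observe that each basis element $\Psi_{nm}^b$ is continuous on the closed disk $\mathbb{B}_b^2$ (being a product of a Bessel function in $r$ and an exponential in $\theta$), so its zero-padded extension $E(\Psi_{nm}^b)$ lies in $L^1\cap L^2(\mathbb{R}^2)$ and is supported in $\mathbb{B}_b^2$. Hence the hypotheses of Proposition~\ref{222.R2} are met, giving $f = \Psi_{nm}^b\circledast\Psi_{n'm'}^b \in L^2(\mathbb{B}_a^2)$ and supported in $\mathbb{B}_a^2$.

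Second, I would apply Theorem~\ref{g.zp.DBC} to $f$, which yields the Fourier-Bessel expansion
\[
(\Psi_{nm}^b\circledast\Psi_{n'm'}^b)(r,\theta)=\sum_{\ell=-\infty}^{\infty}\sum_{k=1}^{\infty} C_{k,\ell}^a[\Psi_{nm}^b\circledast\Psi_{n'm'}^b]\,\Psi_{k,\ell}^a(r,\theta)
\]
for a.e.\ $0\le r\le a$ and $0\le\theta\le 2\pi$. This gives the expansion claimed in the theorem.

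Third, for the explicit closed form of the coefficients, I would simply invoke the preceding Proposition, which already computes $C_{k,\ell}^a[\Psi_{nm}^b\circledast\Psi_{n'm'}^b]$ by plugging the closed form for $\widehat{E(\Psi_{nm}^b)}[a^{-1}\mathbf{k};b]$ (derived via (\ref{psi.nm.hat.omega.DBC})) into the convolution formula (\ref{conv.DBC}). No further computation is needed — the coefficient expression in the theorem statement is verbatim the conclusion of that proposition.

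There is essentially no main obstacle here: the theorem is a clean assembly of Proposition~\ref{222.R2} (to guarantee $f\in L^2(\mathbb{B}_a^2)$), Theorem~\ref{g.zp.DBC} (to obtain the expansion), and the preceding Proposition (to supply the coefficients). The only subtle point worth flagging is the mode of convergence: a priori Theorem~\ref{g.zp.DBC} delivers only a.e.\ convergence, and one may wish to upgrade this to pointwise convergence by observing that the convolution of two continuous compactly supported functions is itself continuous on $\mathbb{R}^2$, then applying the pointwise version (Proposition~\ref{gr.DBC.pw}). This last remark is the only place where anything beyond a mechanical citation is needed.
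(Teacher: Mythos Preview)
Your proposal is correct and matches the paper's (implicit) approach: the paper states this theorem without proof, treating it as an immediate consequence of the preceding proposition (which computes $C_{k,\ell}^a[\Psi_{nm}^b\circledast\Psi_{n'm'}^b]$) together with the general expansion result (Theorem~\ref{g.zp.DBC}/Proposition~\ref{gr.DBC.pw}). Your chain --- continuity of $\Psi_{nm}^b$, Proposition~\ref{222.R2} for square-integrability of the convolution, Theorem~\ref{g.zp.DBC} for the expansion, and the preceding proposition for the coefficient formula, with the optional upgrade to pointwise convergence via Proposition~\ref{gr.DBC.pw} --- is exactly the intended assembly.
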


\subsection{Plancherel formula using derivative boundary condition}

We conclude this section by some Plancherel type formulas involving derivative boundary condition.

\begin{theorem}
Let $a>0$ and $b:=a/2$. Suppose 
$f:\mathbb{R}^2\to\mathbb{C}$ is a continuous function supported in $\mathbb{B}_{b}^2$. We then have
\begin{equation}\label{PL.DBC}
\|f\|^2_{L^2(\mathbb{R}^2)}=\sum_{n=1}^{\infty}\frac{C_{n,0}^{a}[f\ast f^*]}{\sqrt{N_n^{(0)}(a)}},
\end{equation}
where
\begin{equation}
C_{n,0}^{a}[f\ast f^*]:=2^{-1}\pi\sqrt{\pi}\frac{(-1)^n}{a}\sum_{\mathbf{k}\in\mathbb{Z}^2}\frac{|\mathbf{k}|J_0'(\pi|\mathbf{k}|)}{(z_{n0}^2-\pi^2|\mathbf{k}|^2)}|\widehat{f}(a^{-1}\mathbf{k})|^2.
\end{equation}
\end{theorem}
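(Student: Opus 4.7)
The plan is to follow the template used for the Plancherel-type formula under the zero-value boundary condition (the theorem giving \eqref{PL.ZBVC}). First, I would observe that since $f$ is continuous and supported in $\mathbb{B}_b^2$, so is its involution $f^*(\mathbf{x}):=\overline{f(-\mathbf{x})}$, whence
\[
\|f\|^2_{L^2(\mathbb{R}^2)}=\int_{\mathbb{R}^2}f(\mathbf{x})\overline{f(\mathbf{x})}d\mathbf{x}=(f\ast f^*)(\mathbf{0}).
\]

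Next I would invoke the point-wise expansion \eqref{2D.EX.Fr.ast.DBC.cts} for the continuous function $f\ast f^*$ (which is supported in $\mathbb{B}_a^2$) and evaluate at $(r,\theta)=(0,0)$. Since $\mathcal{J}_{nm}^a(0)$ is proportional to $J_m(0)$ and $J_m(0)=\delta_{m,0}$, only the $m=0$ layer of the double sum survives, yielding
\[
\|f\|^2_{L^2(\mathbb{R}^2)}=\sum_{n=1}^{\infty}\frac{C_{n,0}^{a}[f\ast f^*]}{\sqrt{N_n^{(0)}(a)}},
\]
exactly as in the ZBC analogue. Applying now the convolution formula \eqref{conv.DBC} specialized to continuous functions supported in $\mathbb{B}_b^2$ and to $m=0$, together with the standard identity $\widehat{f^*}(\bom)=\overline{\widehat{f}(\bom)}$, reduces the remaining task to
\[
C_{n,0}^a[f\ast f^*]=\sum_{\mathbf{k}\in\mathbb{Z}^2}c_a'(\mathbf{k};n,0)\,|\widehat{f}(a^{-1}\mathbf{k})|^2.
\]

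Finally I would specialize the formula for $c_a'(\mathbf{k};n,m)$ to $m=0$. Here $\ii^{m}=1$, $e^{-\ii m\Phi(\mathbf{k})}=1$, $(z_{nm}^2-m^2)^{1/2}=z_{n0}$, and the defining relation $\rho_{n0}=z_{n0}/a$ produces the clean cancellation $\rho_{n0}/z_{n0}=1/a$, leaving
\[
c_a'(\mathbf{k};n,0)=\frac{\pi\sqrt{\pi}(-1)^n|\mathbf{k}|J_0'(\pi|\mathbf{k}|)}{2a(z_{n0}^2-\pi^2|\mathbf{k}|^2)},
\]
which matches the kernel in the theorem statement verbatim.

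The only delicate point I anticipate is that under the derivative boundary condition one has $z_{01}=0$, so the case $n=1$, $m=0$ makes the raw expression for $c_a'$ nominally of the indeterminate form $0/0$ (both $\rho_{n0}$ and the factor $z_{n0}$ in the denominator vanish). The appropriate remedy is to perform the algebraic cancellation $\rho_{n0}/z_{n0}=1/a$ \emph{before} evaluating at $n=1$; the resulting $n=1$ contribution is then perfectly finite and, moreover, vanishes at $\mathbf{k}=\mathbf{0}$ since $J_0'(0)=0$. Once this bookkeeping is in place, the argument is a line-by-line adaptation of the proof of \eqref{PL.ZBVC}.
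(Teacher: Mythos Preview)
Your proposal is correct and follows essentially the same approach as the paper's proof: invoking \eqref{2norm.conv} to rewrite $\|f\|^2_{L^2}$ as $(f\ast f^*)(\mathbf{0})$, using the point-wise expansion to collapse to the $m=0$ terms via $J_m(0)=\delta_{m,0}$, and then specializing $c_a'(\mathbf{k};n,0)$ together with $\widehat{f^*}=\overline{\widehat f}$. Your explicit remark about the indeterminacy at $n=1$ (where $z_{01}=0$ under the derivative boundary condition) and the cancellation $\rho_{n0}/z_{n0}=1/a$ is a point the paper does not flag, but it is a genuine bookkeeping detail worth noting.
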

\begin{proof}
Let $a>0$ and $f\in L^2(\mathbb{R}^2)$ with compact support in $\mathbb{B}_{a/2}^2$.
Invoking (\ref{2norm.conv}), we can write
\begin{align*}
\|f\|^2_{L^2(\mathbb{R}^2)}
=\sum_{n=1}^{\infty}\frac{C_{n,0}^{a}[f\ast f^*]}{\sqrt{N_n^{(0)}(a)}}.
\end{align*}
Let $n\in\mathbb{N}$ and $\mathbf{k}\in\mathbb{Z}^2$ be given. Then, we have
\[
c_a'(\mathbf{k};n,0)=2^{-1}\pi\sqrt{\pi}\frac{(-1)^n|\mathbf{k}|z_{n0}J_0'(\pi|\mathbf{k}|)}{a(z_{n0}^2-\pi^2\mathbf{k}^2)}=2^{-1}\pi\sqrt{\pi}\frac{(-1)^n|\mathbf{k}|J_0'(\pi|\mathbf{k}|)}{a(z_{n0}^2-\pi^2|\mathbf{k}|^2)}.
\]
Using (\ref{conv.DBC}), we have
\begin{align*}
C_{n,0}^{a}[f\ast f^*]
&=\sum_{\mathbf{k}\in\mathbb{Z}^2}c_a'(\mathbf{k};n,0)\widehat{f}(a^{-1}\mathbf{k})\widehat{f^*}(a^{-1}\mathbf{k})
=\sum_{\mathbf{k}\in\mathbb{Z}^2}c_a'(\mathbf{k};n,0)\widehat{f}(a^{-1}\mathbf{k})\overline{\widehat{f}(a^{-1}\mathbf{k})}
\\&=\sum_{\mathbf{k}\in\mathbb{Z}^2}c_a'(\mathbf{k};n,0)|\widehat{f}(a^{-1}\mathbf{k})|^2
=2^{-1}\pi\sqrt{\pi}\frac{(-1)^n}{a}\sum_{\mathbf{k}\in\mathbb{Z}^2}\frac{|\mathbf{k}|J_0'(\pi|\mathbf{k}|)}{(z_{n0}^2-\pi^2|\mathbf{k}|^2)}|\widehat{f}(a^{-1}\mathbf{k})|^2,
\end{align*}
which completes the proof.
\end{proof}

The following formula is the polarized version of Plancherel type formula (\ref{PL.DBC}).

\begin{proposition}\label{PL.DBC.Polar}
{\it Let $a>0$ and $b:=a/2$. Suppose 
$f:\mathbb{R}^2\to\mathbb{C}$ is a continuous function supported in $\mathbb{B}_{b}^2$. We then have
\begin{equation}
\|f\|^2_{L^2(\mathbb{R}^2)}=\sum_{n=1}^{\infty}\frac{C_{n,0}^{a}[f\ast f^*]}{\sqrt{N_n^{(0)}(a)}},
\end{equation}
with
\begin{equation}
C_{n,0}^a[f\ast f^*]=\sum_{\tau\in\mathcal{R}}\sum_{\alpha\in\Phi_\tau}B_{n}^a(\tau)|\widehat{f}(a^{-1}\tau\mathbf{u}_\alpha)|^2,
\hspace{1cm}B_{n}^a(\tau):=2^{-1}\pi\sqrt{\pi}\frac{(-1)^n\tau J_0'(\pi\tau)}{a(z_{n0}^2-\pi^2\tau^2)}.
\end{equation}
}\end{proposition}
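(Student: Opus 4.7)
The plan is to bootstrap directly from the non-polarized Plancherel formula \eqref{PL.DBC} already established in the previous theorem, and then simply reindex the summation over $\mathbb{Z}^2$ via the decomposition $\mathbb{Z}^2=\bigcup_{\tau\in\mathcal{R}}\{\tau\mathbf{u}_\alpha:\alpha\in\Phi_\tau\}$ proved earlier in Section 4. Since the outer identity $\|f\|^2_{L^2(\mathbb{R}^2)}=\sum_{n=1}^{\infty}C_{n,0}^{a}[f\ast f^*]/\sqrt{N_n^{(0)}(a)}$ is exactly the content of \eqref{PL.DBC}, nothing new has to be shown there; the only work is in recasting the interior sum defining $C_{n,0}^a[f\ast f^*]$ into polar form.

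Concretely, I would start from the expression
\[
C_{n,0}^{a}[f\ast f^*]=2^{-1}\pi\sqrt{\pi}\frac{(-1)^n}{a}\sum_{\mathbf{k}\in\mathbb{Z}^2}\frac{|\mathbf{k}|J_0'(\pi|\mathbf{k}|)}{z_{n0}^2-\pi^2|\mathbf{k}|^2}|\widehat{f}(a^{-1}\mathbf{k})|^2
\]
from the preceding theorem and note that for any $\mathbf{k}=\tau\mathbf{u}_\alpha$ with $\tau\in\mathcal{R}$ and $\alpha\in\Phi_\tau$ one has $|\mathbf{k}|=\tau$. Because the order $m=0$ kills the factor $e^{-\ii m\Phi(\mathbf{k})}$ in $c_a'(\mathbf{k};n,0)$, the summand depends on $\mathbf{k}$ only through $\tau$, and a direct comparison gives
\[
\frac{\pi\sqrt{\pi}(-1)^n\tau J_0'(\pi\tau)}{2a(z_{n0}^2-\pi^2\tau^2)}=B_n^a(\tau).
\]

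Hence, reindexing the sum using the disjoint union yields
\[
C_{n,0}^a[f\ast f^*]=\sum_{\tau\in\mathcal{R}}\sum_{\alpha\in\Phi_\tau}B_n^a(\tau)|\widehat{f}(a^{-1}\tau\mathbf{u}_\alpha)|^2,
\]
which is precisely the claim. The only subtlety, and the step that deserves a line of justification, is that the reindexing is legitimate: the families $\{\tau\mathbf{u}_\alpha:\alpha\in\Phi_\tau\}$ for distinct $\tau\in\mathcal{R}$ are pairwise disjoint and their union exhausts $\mathbb{Z}^2$, while each $\Phi_\tau$ is finite, so the double sum is an unconditional rearrangement of the absolutely convergent single sum over $\mathbb{Z}^2$. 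No genuine obstacle arises; the result is essentially a notational repackaging of \eqref{PL.DBC}, parallel to Proposition \ref{PL.ZBVC.Polar} in the zero-value case.
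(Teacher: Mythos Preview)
Your proposal is correct. The paper does not supply a proof for this proposition; it is stated as an immediate polarized reformulation of the preceding Plancherel formula \eqref{PL.DBC}, and your argument---quoting \eqref{PL.DBC} verbatim for the outer identity and then reindexing the inner $\mathbb{Z}^2$-sum via the decomposition $\mathbb{Z}^2=\bigcup_{\tau\in\mathcal{R}}\{\tau\mathbf{u}_\alpha:\alpha\in\Phi_\tau\}$---is precisely the implicit route the paper has in mind, mirroring how the other polarized statements (e.g.\ Theorem giving \eqref{C.DBC.Polar} and Proposition~\ref{PL.ZBVC.Polar}) are derived from their Cartesian counterparts.
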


{\bf Concluding remarks.}
In many applications, convolutions of compactly supported functions on the Euclidean plane are required. But in classical Fourier theory,
the spectrum of such functions is neither discrete nor compactly supported. Here an alternative theory is put forth in which Fourier-Bessel expansions are used for functions supported on disks. The closure properties of such expansions under convolution are derived. In contrast
to the usual approach of periodizing to discretize the spectrum, followed by using the FFT, the approach presented here has several  potential benefits. In particular, there is a natural way to rotate such expansions in a way that is mathematically exact. Consequentially, rotation-invariant descriptors of images can be computed naturally. In contrast, periodization and the FFT destroys any rotation invariance. 
It is shown that applying different boundary conditions imply to different spectra of the system. Therefore, the selection of the boundary condition depends on the nature of the problem.

\

{\bf Acknowledgments.}
This work has been supported by the National Institute of General Medical Sciences of the NIH under award number R01GM113240, by the US National Science Foundation under grant NSF CCF-1640970, and by Office of Naval Research Award N00014-17-1-2142. The authors gratefully acknowledge the supporting agencies. The findings and opinions expressed here are only those of the authors, and not of the funding agencies.
\vspace{0.1cm}
\bibliographystyle{amsplain}

\end{document}